\documentclass[11pt]{amsart}
\usepackage{enumitem, amssymb}

\usepackage{fontawesome}
\usepackage{hyperref}
\usepackage{tikz}
\numberwithin{equation}{section}

\DeclareMathOperator{\OCAT}{OCA_T}
\DeclareMathOperator{\MA}{MA}

\DeclareMathOperator{\dist}{dist} 
\DeclareMathOperator{\Fin}{Fin}

\newcounter{my_enumerate_counter}

\DeclareMathOperator{\coTh}{coTh}
\DeclareMathOperator{\Th}{Th}

\newcommand{\cP}{\mathcal P}

\newcommand{\bbC}{{\mathbb C}}

\newcommand{\bbN}{{\mathbb N}}

\newcommand{\bbI}{{\mathbb I}}

\newcommand{\calL}{{\mathcal L}}

\newcommand{\cU}{{\mathcal U}}

\newcommand{\cstar}{$\mathrm{C}^*$}
\newcommand{\cst}{\mathrm{C}^*}

\newcommand{\bfF}{\mathsf F}

\DeclareMathOperator{\id}{id}

\newcommand{\bt}{\mathsf t}

\newcommand{\bbF}{{\mathbb F}}

\newcommand{\bfC}{\mathsf C}

\newcommand{\bbQ}{\mathbb Q}
\newcommand{\bbR}{\mathbb R}
\newcommand{\bbH}{\mathbb H}

\newcommand{\cI}{{\mathcal I}}

\newcommand{\rs}{\restriction}

\newcommand{\cF}{\mathcal F}

\newcommand{\cB}{\mathcal B}

\newcommand{\cM}{\mathcal M}

\newcommand{\cPN}{\cP(\bbN)}

\DeclareMathOperator{\Clop}{Clop}

\DeclareMathOperator{\Sep}{Sep} 

\newtheorem{theorem}{Theorem}[section]
\newtheorem*{theorem*}{Theorem}
\newtheorem{proposition}[theorem]{Proposition}

\newtheorem*{proposition*}{Proposition}
\newtheorem{lemma}[theorem]{Lemma}
\newtheorem*{lemma*}{Lemma}
\newtheorem{corollary}[theorem]{Corollary}
\newtheorem*{corollary*}{Corollar}

\newtheorem*{fact*}{Fact}
\theoremstyle{definition}
\newtheorem{definition}[theorem]{Definition}
\newtheorem*{definition*}{Definition}

\newtheorem*{claim*}{Claim}

\newtheorem*{conjecture*}{Conjecture}

\theoremstyle{remark}
\newtheorem{example}[theorem]{Example}
\newtheorem*{example*}{Example}
\newtheorem{remark}[theorem]{Remark}
\newtheorem*{remark*}{Remark}

\newtheorem*{note*}{Note}
\newtheorem*{question*}{Question}

\newtheorem*{funfact}{Fun fact}

\author[Ilijas Farah]{Ilijas Farah}
\thanks{Partially supported by NSERC.}
\thanks{Many thanks to Jan van Mill for a conversation that resulted in this paper.}
\address{Department of Mathematics and Statistics,
York University,
4700 Keele Street,
Toronto, Ontario, Canada, M3J
1P3} 
\address{Matemati\v cki Institut SANU,
Kneza Mihaila 36,
11\,000 Beograd, p.p. 367,
Serbia}
\email{email: ifarah@yorku.ca}
\urladdr{hhttps://ifarah.mathstats.yorku.ca}

\thanks{ORCID iD https://orcid.org/0000-0001-7703-6931}

\title{Applications of the Gelfand--Naimark duality}
\date{\today}

\begin{document}

\begin{abstract}
Stone duality is an indispensable tool for the study of compact, zero-dimensional, Hausdorff spaces. 
In the case of general compact Hausdorff spaces one can get quite a bit of mileage by considering the `Wallman duality' between compact spaces and lattices of closed sets. I will argue that the Gelfand--Naimark duality between compact Hausdorff spaces and unital, commutative \cstar-algebras provides great insight into compact Hausdorff spaces, and \v Cech--Stone remainders and their autohomeomorphisms in particular. 
\end{abstract}
\maketitle
\tableofcontents

Let me start by stating that there is nothing new here. Perhaps it is a mere coincidence that a 1960 classic on (among other things) \v Cech--Stone compactifications is called `Rings of continuous functions' (\cite{gillmanrings}), while the series of four papers some two decades prior by Murray and von Neumann introduced operator algebras (now known as von Neumann algebras) under the name of `Rings of operators' (\cite{murray1943rings} and references thereof). 
Some readers may object that every theorem about compact Hausdorff spaces proved by using the Gelfand--Naimark duality can be also proved by using Wallman duality. Yes, but all of those theorems can also be proved from the scratch, by using definitions only. The insight provided by the Gelfand--Naimark duality is, in my opinion, well worth the trouble.

\subsubsection*{Acknowledgements} Special thanks to Bruno de Mendon\c a Braga and his assistant. 

\section{Preliminaries}

In this section we briefly mention Stone duality and then cover the basics of (commutative) \cstar-algebras and model theory. 

\subsection{Stone duality}

To a compact, Hausdorff, zero-dimensional space $X$ one associates $\Clop(X)$, the Boolean algebra of its clopen sets. If $f\colon X\to Y$ is a continuous function between compact Hausdorff spaces, then the function $f_*\colon \Clop(Y)\to \Clop(X)$ defined by 
\[
f_*(U)=f^{-1}(U)
\]
is a Boolean algebra homomorphism. 

\begin{theorem} \label{T.Stone.Duality} The functor $X\mapsto \Clop(X)$ is an equivalence of categories of compact, Hausdorff, zero-dimensional spaces and Boolean algebras. 
	This functor is contravariant. More precisely, we have the following for every continuous function $f\colon X\to Y$ between compact, Hausdorff, zero-dimensional spaces.  
	\begin{enumerate}
		\item $f$ is surjective if and only if $f_*$ is injective. 
		\item $f$ is injective if and only if $f_*$ is surjective. \qed 
	\end{enumerate}
\end{theorem}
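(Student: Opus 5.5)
The plan is to construct an inverse functor and check that both composites are naturally isomorphic to the identity. For a Boolean algebra $B$, let $S(B)$ be its Stone space: the set of ultrafilters on $B$, with the topology generated by the sets $[a]=\{p\in S(B): a\in p\}$ for $a\in B$. A homomorphism $h\colon B\to C$ induces the map $h^*\colon S(C)\to S(B)$, $p\mapsto h^{-1}(p)$. The identity $(h^*)^{-1}[a]=[h(a)]$ shows that $h^*$ is continuous.

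For the equivalence of categories I would proceed in three steps.
\begin{enumerate}
\item Show that $S(B)$ is compact, Hausdorff and zero-dimensional. The sets $[a]$ are clopen since $[a]^c=[\neg a]$, and they separate distinct ultrafilters. For compactness, a cover by basic sets $[a_i]$ with no finite subcover means that the elements $\neg a_i$ have the finite intersection property, so they generate a proper filter. Extending it to an ultrafilter by Zorn's lemma gives a point outside every $[a_i]$, a contradiction.
\item Show that $a\mapsto [a]$ is an isomorphism $B\cong \Clop(S(B))$. Injectivity uses the ultrafilter lemma once more: if $a\neq b$, then say $a\wedge\neg b\neq 0$, and any ultrafilter containing it lies in $[a]\setminus[b]$. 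Surjectivity holds because a clopen set is compact and open, hence a finite union of basic sets $[a_i]$, which equals $[\bigvee_i a_i]$.
\item Show that $x\mapsto \{U\in\Clop(X): x\in U\}$ is a homeomorphism $X\cong S(\Clop(X))$. Injectivity follows from zero-dimensionality together with Hausdorffness. Surjectivity follows from compactness: the clopen sets in an ultrafilter have nonempty intersection, and any point in that intersection determines exactly this ultrafilter. Since the map is a continuous bijection from a compact space to a Hausdorff space, it is a homeomorphism.
\end{enumerate}
Naturality of both isomorphisms in $X$ and in $B$ is a direct unwinding of the definitions. Contravariance is evident from $(g\circ f)_*=f_*\circ g_*$.

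For (1), suppose $f$ is surjective and $f_*(U)=f_*(V)$. Then $U=f(f^{-1}(U))=f(f^{-1}(V))=V$, so $f_*$ is injective. Conversely, suppose $f$ is not surjective. The image $f(X)$ is compact, hence closed. Zero-dimensionality of $Y$ then gives a nonempty clopen $U$ disjoint from $f(X)$, and $f_*(U)=\emptyset=f_*(\emptyset)$, so $f_*$ is not injective.

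For (2), suppose $f$ is injective and $W\subseteq X$ is clopen. Then $f(W)$ and $f(X\setminus W)$ are disjoint compact subsets of $Y$. I expect this to be the main obstacle: we need a clopen $U\subseteq Y$ with $f(W)\subseteq U$ and $U\cap f(X\setminus W)=\emptyset$. To get it, use that in a compact zero-dimensional space disjoint compact sets can be separated by a clopen set. For each pair of points, one from each set, take a clopen set separating them. Compactness of the second set gives, for each point of the first, finitely many such sets whose intersection contains that point and misses the second set. Compactness of the first set then gives finitely many of these, and their union is the required $U$. It satisfies $f^{-1}(U)=W$, so $f_*$ is surjective. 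Conversely, suppose $f(x)=f(y)$ with $x\neq y$. A clopen set $W$ containing $x$ but not $y$ cannot be of the form $f^{-1}(U)$, since any such set contains $x$ if and only if it contains $y$. Hence $f_*$ is not surjective.
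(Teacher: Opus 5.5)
Your proof is correct. The paper gives no proof of this theorem; it is stated without proof as classical background, so there is no argument of the paper's to measure yours against. What you wrote is the standard Stone-space proof, and each step checks out:
\begin{enumerate}
\item compactness of $S(B)$ via the finite intersection property;
\item the isomorphism $B\cong\Clop(S(B))$;
\item the homeomorphism $X\cong S(\Clop(X))$;
\item both directions of (1) and of (2), including the clopen separation of disjoint compact sets.
\end{enumerate}

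Your argument is the zero-dimensional counterpart of the paper's sketch of Theorem~\ref{T.GN.Duality}:
\begin{enumerate}
\item Ultrafilters on $B$ (equivalently, homomorphisms $B\to\{0,1\}$) play the role of characters.
\item The map $a\mapsto[a]$ plays the role of the evaluation map $A\to C(\hat A)$.
\item Your clopen set separating $f(W)$ from $f(X\setminus W)$ replaces the Tietze extension theorem in showing that injectivity of $f$ gives surjectivity of $f_*$.
\item Separation of points by clopen sets replaces separation by continuous functions.
\end{enumerate}

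The details you leave implicit are all routine:
\begin{enumerate}
\item $h^{-1}(p)$ is again an ultrafilter, because $h$ preserves $0$ and complements.
\item $[a\vee b]=[a]\cup[b]$, which uses primeness of ultrafilters.
\item $x\mapsto\{U: x\in U\}$ is continuous, because the preimage of $[U]$ is $U$.
\end{enumerate}
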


This duality is very useful in the study of $\beta\bbN\setminus \bbN$ and \v Cech--Stone remainders of other zero-dimensional spaces (see \cite{Ru}, \cite[\S 0]{vM:Introduction}, \cite{dow2002applications}, \cite[\S 4]{Fa:AQ},\dots ) 

In the category of all compact Hausdorff spaces, the `Stone functor' is forgetful---e.g., all continua are sent to the trivial two-element Boolean algebra. The lattice of closed sets serves as a surprisingly good replacement in the general case (e.g., \cite{DoHa:Universal}, \cite{hart2005there}). In this note I will try to convince the reader that another, more canonical, approach is well worth the trouble of learning the language. 

\subsection{\cstar-algebras and Gelfand--Naimark duality}\label{S.GN}
A comprehensive and illuminating reference for \cstar-algebras and operator algebras in general is \cite{Black:Operator}. See also \cite{Fa:STCstar} (this book will be, unapologetically, cited more often than necessary). 
Most readers can safely skip an initial segment of this subsection. 

\begin{definition}\label{D.C(X)}
To a compact Hausdorff space $X$ one can associate the Banach space $C(X)$ of all continuous complex-valued\footnote{Complex-valued rather than real-valued, because of the spectral theorem for bounded normal linear operators on a Hilbert space.} functions on $X$. When equipped with the supremum norm
\[
\|f\|=\sup_{x\in X} |f(x)|, 
\] 
pointwise multiplication, and conjugation, $C(X)$ becomes a commutative \emph{\cstar-algebra} with multiplicative unit $1$.
\end{definition}

\begin{remark} In \cite{gillmanrings}, the notation $C(X)$ is used for the ring of real-valued continuous functions on $X$, and the space $X$ is only assumed to be Tychonoff. 
	The same book uses $\cst(X)$ for the ring of real-valued, bounded, continuous functions on $X$. 
	Considering real-valued functions instead of complex-valued ones does not make much difference in the commutative setting. The subalgebra of real-valued functions in $C(X)$ (as in Definition~\ref{D.C(X)}) coincides with the self-adjoint part $C(X)_{\textrm{sa}}$ of this algebra (i.e., $f$ satisfying $f=f^*$) and is an algebra in its own right. Moreover, $C(X)$ is easily interpretable in $C(X)_{\textrm{sa}}$, as pairs $f+i g$ with the naturally defined arithmetic operations and norm. However, the self-adjoint part of a noncommutative \cstar-algebra is not even closed under multiplication. 
\end{remark}

 For readers rusty on functional analysis we provide some definitions.

The reader will notice that $C(X)$ is just the familiar $C_p(X)$ space (see \cite[\S 6]{vanMill2001infinite}), with two differences. The scalars are complex instead of real and it is considered with respect to the uniform topology instead of the topology of pointwise convergence. 
In other words, $C_p(X)$ is just $C(X)$ (or $C_b(X)$ if~$X$ is not compact) with real scalars and considered with respect to the weak topology. 

\subsubsection{Hilbert space}
A (complex)\footnote{In case you haven't noticed yet, in this paper nothing is real.} Hilbert space $H$ is a Banach space equip\-ped with an \emph{inner product} $(\xi|\eta)$. This inner product is linear in the first coordinate and conjugate linear in the second (i.e., it is a \emph{sesquilinear form}, which is Latin for `one-and-a-half-linear'). The norm is defined as 
\[
\|\xi\|_2=(\xi|\xi)^{1/2}. 
\]
The following are some examples of Hilbert spaces. 
\begin{example} 
\begin{enumerate}
	\item If $Z$ is any set, then (writing $\bar a$ for $(a_z)_{z\in Z}$)
	\[
	\textstyle \ell_2(Z)=\{\bar a: a_z\in \bbC, \sum_z|a_z|^2<\infty\}
	\]
	equipped with the inner product $((\bar a)| (\bar b))=\sum_z a_z\overline {b_z}$\footnote{This sum is finite for all $\bar a$ and $\bar b$ by the Cauchy--Schwarz inequality.} is a Hilbert space. 
	
\item 	The space ($\int$ stands for the Lebesgue integral)
	\[
	\textstyle L_2(\bbR)=\{f\colon \bbR\to \bbC: \int |f|^2<\infty \}
	\]
	equipped with the inner product $(f|g)=\int f\overline g$ (again, if $f$ and $g$ are in $L_2(\bbR)$, then the integral is finite by the Cauchy--Schwarz inequality). 
	
\item 	More generally, if $X$ is a compact Hausdorff space and $\mu$ is a strictly positive locally finite measure, then 
	\[
	\textstyle L_2(X,\mu)=\{f\colon X\to \bbC: \int |f|^2\, d\mu<\infty\}
	\]
	equipped with the inner product $(f|g)=\int f\overline g\, d\mu$ 
	is a Hilbert space. 
	
	One can take $\mu$ to be the counting measure on $X$, in which case $L_2(X,\mu)$ becomes $\ell_2(X)$ as in the first example. 
\end{enumerate}

\end{example}

The space $\ell_2(\bbN)$ is isometrically isomorphic to $L_2(\bbR)$ and to every separable, infinite-dimensional Hilbert space. 

\subsubsection{Algebras}
Recall that a (complex) algebra is a complex vector space~$A$ equipped with a multiplication operation such that $(A,+,\cdot)$ is a ring (i.e., it is a Banach algebra). All algebras in this text will be complex. The algebra $\cB(H)$ of bounded linear operators on a Hilbert space $H$ is an example of an algebra that has additional structure. 
It is equipped with the \emph{operator norm} 
\[
\|a\|=\sup_{\xi\in H, \|\xi\|_2\leq 1}\|a\xi\|_2
\]
and with the \emph{adjoint} operation, where the adjoint of $a$ is the uniquely defined operator $a^*$ such that for all $\xi$ and $\eta$ in $H$ we have 
\[
(a\xi|\eta)=(\xi| a^*\eta)
\] 

\begin{definition}A (concrete) \cstar-algebra is an algebra of operators on a complex Hilbert space $H$ that is closed in the operator norm.
A \emph{*-ho\-mo\-mor\-phi\-sm} between \cstar-algebras $A$ and $B$ is a map $\Phi\colon A\to B$ that is linear, multiplicative, and self-adjoint.\footnote{The category of \cstar-algebras is equipped with at least four natural morphisms, and `homomorphism' stands for one that preserves the algebra structure, but not necessarily the adjoint operation.}

An \emph{abstract \cstar-algebra} is a Banach algebra with involution $*$ that is isometrically\footnote{This condition is redundant; every *-homomorphism between \cstar-algebras is continuous and every isomorphism is an isometry (\cite[Lemma 1.2.10, but see Example 1.2.11]{Fa:STCstar}).} isomorphic to a \cstar-algebra on some Hilbert space. 
\end{definition}
\subsubsection{Compact Hausdorff spaces}
By a result of Gelfand--Naimark--Segal, abstract \cstar-algebras have a simple axiomatization (\cite[Definition 1.2.2 and Theorem 1.10.1]{Fa:STCstar}). 

\begin{example}
	If $X$ is a compact Hausdorff space, then the algebra $C(X)$ is a \cstar-algebra. 
	Suppose $X$ is equipped with a strictly positive, $\sigma$-additive measure~$\mu$. (Recall that $\mu$ is \emph{strictly positive} if $\mu(U)>0$ for every nonempty open set $U$.) In this case $f\in C(X)$ can be identified with the multiplication operator $M_f$ on $L_2(X,\mu)$ defined by 
	\[
	M_f(g)=fg. 
	\]
	Proving that $M_f$ is a bounded linear operator on $L_2(X,\mu)$ and that $f\mapsto M_f$ defines a *-isomorphism  is either obvious or a nice exercise, depending on the reader's disposition. 
\end{example}

If $f\colon X\to Y$ is a continuous function between compact Hausdorff spaces, then $f_*\colon C(Y)\to C(X)$ defined by 
\begin{equation}\label{eq.f*}
f_*(g)=f\circ g
\end{equation}
is a *-homomorphism. The following is a theorem of Gelfand and Naimark (\cite{gelfand1943imbedding}), proved five years after Wallman's \cite{wallman1938lattices}.

\begin{theorem} \label{T.GN.Duality} The functor $X\mapsto C(X)$ is an equivalence of categories of compact Hausdorff spaces and commutative, unital, \cstar-algebras. 
This functor is contravariant. More precisely, we have the following for every continuous function $f\colon X\to Y$ between compact Hausdorff spaces.  
\begin{enumerate}
	\item $f$ is surjective if and only if $f_*$ is injective. 
	\item $f$ is injective if and only if $f_*$ is surjective. 
\end{enumerate}
\end{theorem}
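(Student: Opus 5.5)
We plan to prove the Gelfand--Naimark duality of Theorem~\ref{T.GN.Duality} (note that $f_*(g)$ in \eqref{eq.f*} should read $g\circ f$). The argument has three parts: recover the space from the algebra via characters, show the functor is essentially surjective, and then verify (1) and (2) directly.

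\emph{Recovering $X$ from $C(X)$.} For a commutative unital \cstar-algebra $A$, let $\hat A$ be the set of nonzero *-homomorphisms $\varphi\colon A\to\bbC$ (characters), with the weak$^*$ topology.
\begin{enumerate}
\item Every character has norm $1$. Since $\hat A$ is weak$^*$ closed in the unit ball of $A^*$, it is compact Hausdorff by Banach--Alaoglu.
\item For $A=C(X)$, the map $x\mapsto \mathrm{ev}_x$ is a homeomorphism $X\to\widehat{C(X)}$.
 \begin{itemize}
 \item It is injective and continuous by Urysohn's lemma and the definition of the weak$^*$ topology. Being a continuous bijection from a compact space onto a Hausdorff one, it is a homeomorphism once it is onto.
 \item For surjectivity, let $\varphi$ be a character and suppose $\varphi\neq \mathrm{ev}_x$ for every $x$. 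Then for each $x$ there is $g_x$ with $\varphi(g_x)=0\neq g_x(x)$. By compactness finitely many $g_{x_i}$ suffice to cover $X$, and $h=\sum_i |g_{x_i}|^2$ is then invertible in $C(X)$. But $\varphi(h)=\sum_i\varphi(g_{x_i})\overline{\varphi(g_{x_i})}=0$, a contradiction.
 \end{itemize}
\item Every *-homomorphism $\Phi\colon C(Y)\to C(X)$ is $f_*$ for the continuous map $f(x)=$ the point of $Y$ corresponding to $\mathrm{ev}_x\circ\Phi$. This gives full faithfulness.
\end{enumerate}

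\emph{Essential surjectivity.} This is the main obstacle: showing that the Gelfand transform $\Gamma\colon A\to C(\hat A)$, $a\mapsto(\varphi\mapsto\varphi(a))$, is an isometric *-isomorphism.
\begin{itemize}
\item \emph{Spectrum.} Use the Gelfand--Mazur theorem: maximal ideals of a commutative unital Banach algebra are closed, and each quotient is $\bbC$. Hence the spectrum of $a$ equals $\{\varphi(a):\varphi\in\hat A\}$.
\item \emph{Isometry.} Combine the spectral radius formula with the \cstar-identity. For self-adjoint $a$ we have $\|a^{2^n}\|=\|a\|^{2^n}$, so $\|\Gamma a\|=r(a)=\|a\|$. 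For general $a$, apply this to $a^*a$.
\item \emph{Characters respect $*$.} Self-adjoint elements have real spectrum (via the unitaries $e^{ita}$), so characters preserve the adjoint.
\item \emph{Onto.} The image of $\Gamma$ is a closed, self-adjoint, unital subalgebra of $C(\hat A)$ that separates points. By Stone--Weierstrass it is everything.
\end{itemize}
I would cite the standard facts from \cite{Fa:STCstar} for Gelfand--Mazur and the spectral radius formula rather than reprove them.

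\emph{Items (1) and (2).} These are then direct.
\begin{enumerate}
\item If $f$ is surjective and $g\circ f=0$, then $g=0$, so $f_*$ is injective. Conversely, if $f[X]\neq Y$, then $f[X]$ is compact, hence closed, and Urysohn gives a nonzero $g$ vanishing on $f[X]$. Then $f_*(g)=0$, so $f_*$ is not injective.
\item If $f$ is injective, then $f$ is a homeomorphism onto the closed set $f[X]$. Given $h\in C(X)$, the function $h\circ f^{-1}$ extends to all of $Y$ by Tietze's extension theorem (applied to the real and imaginary parts), so $f_*$ is surjective. Conversely, if $f(x)=f(x')$ with $x\neq x'$, then every $f_*(g)$ agrees at $x$ and $x'$. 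Urysohn gives an $h\in C(X)$ that separates $x$ and $x'$, and this $h$ is not in the range of $f_*$.
\end{enumerate}
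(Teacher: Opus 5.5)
Your proof is correct. It follows the same outline as the paper's sketch: characters, Banach--Alaoglu, $f(\varphi)=\varphi\circ\Psi$ for morphisms, and Tietze for (2). It differs at the central step, however, and is more complete.

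The paper gets surjectivity of the Gelfand transform $A\to C(\hat A)$ by citing the fact that weak$^*$-continuous linear functionals on $A^*$ are implemented by elements of $A$. You instead show that $\Gamma$ is isometric, via the spectral radius formula, the \cstar-identity, and the $e^{ita}$ argument that multiplicative functionals preserve $*$. You then apply Stone--Weierstrass to its closed, self-adjoint, unital, point-separating range.

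Your route is the standard one, and it genuinely closes this step. A continuous function on $\hat A$ is not a priori the restriction of a weak$^*$-continuous linear functional on $A^*$. Indeed, such restrictions are exactly the functions $\Gamma a$, so the cited fact only describes the range of $\Gamma$ and does not show that range is everything. Likewise, injectivity of $\Gamma$, which the paper asserts without comment, really depends on the \cstar-identity; it fails for general commutative Banach algebras.

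You also do more than the paper in two places:
\begin{itemize}
\item You verify $X\cong\widehat{C(X)}$, which the paper takes for granted.
\item You prove both directions of (1) and (2). The paper argues only that $f$ surjective implies $f_*$ injective, and that $f$ injective implies $f_*$ surjective. Your Urysohn arguments for the converses are fine.
\end{itemize}
What the paper's version buys is brevity: it is an outline that defers details to \cite{Fa:STCstar}.

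One point you should state explicitly: the morphisms must be \emph{unital} $*$-homomorphisms. Otherwise $\mathrm{ev}_x\circ\Phi$ can vanish (for instance when $\Phi=0$), and full faithfulness fails. Your correction $f_*(g)=g\circ f$ is right.
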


\begin{proof} 
	This is \cite[Theorem 1.3.1 and Theorem 1.3.2]{Fa:STCstar}, where more details can be found. Nevertheless, the main ideas of the proof are well worth outlining (mathematics is, after all, all about proofs).  
	Given a commutative, unital \cstar-algebra $A$, one needs to recover the compact Hausdorff space $X$ such that $A\cong C(X)$. This is the space of all unital *-homomorphisms $\varphi\colon A\to \bbC$; such $\varphi$ is called a \emph{character}. Every character is automatically self-adjoint, continuous, and of norm 1, hence the space $\hat A$ of all characters of $A$ is a subspace of the unit ball of the Banach dual of $A$. By the Banach--Alaoglu theorem, the dual unit ball is compact with respect to the weak*-topology. Its subspace $\hat A$ is  closed in the weak*-topology (e.g., if $|\varphi(xy)-\varphi(x)\varphi(y)|>0$ then $\varphi$ has a weak*-open neighbourhood consisting of functionals that satisfy the same inequality). Therefore  $\hat A$ is compact with respect to the weak*-topology.

	To prove that $X\mapsto C(X)$ is an equivalence of categories, we need to verify that every *-homomorphism $\Psi\colon C(Y)\to C(X)$ is of the form $f_*$ (see~\eqref{eq.f*}) for some continuous $f\colon X\to Y$. Using the notation from the previous paragraph, $X=\widehat{C(X)}$ and $Y=\widehat {C(Y)}$, and 
	\[
	f(\varphi)=\varphi\circ \Psi
	\] 
	defines a function from the space of characters of $C(X)$ to the space of characters of $C(Y)$. It is straightforward to verify, using the continuity of~$\Psi$, that $f$ is weak*-continuous.   

Every $a\in A$ defines a weak*-continuous function on $\hat A$, the evaluation functional. This defines an injective *-homomorphism of $A$ into $C(\hat A)$. This *-homomorphism is surjective because every weak*-continuous linear functional is implemented by an element of $A$ (e.g., \cite[Proposition 2.4.4]{Pede:Analysis}). 

	Now assume $f\colon X\to Y$. 
The fact that if $f$ is injective then $f_*$ is surjective is the Tietze extension theorem. 
If $f$ is surjective, then $f_*$ is injective because continuous functions on $Y$ separate points.  
\end{proof}

The following, together with the L\"owenheim--Skolem theorem,  will come handy later on. 

\begin{lemma}\label{L.MetrizableSeparable}
	A commutative \cstar-algebra $C(X)$ is separable if and only if $X$ is compact and metrizable. 
\end{lemma}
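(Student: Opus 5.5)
We prove the two implications separately. Throughout, $X$ is the compact Hausdorff space attached to $C(X)$ by Theorem~\ref{T.GN.Duality}, so the content of the lemma is that $C(X)$ is norm-separable if and only if $X$ is metrizable.

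\emph{Metrizable implies separable.} Suppose $X$ is compact metrizable with metric $d$. By compactness $X$ is separable; fix a countable dense set $\{x_n:n\in\bbN\}$ and put $g_n(x)=d(x,x_n)$. The $g_n$ separate points of $X$: if $x\neq y$, choose $x_n$ with $d(x_n,x)<d(x,y)/2$, and then $g_n(x)<g_n(y)$. Now let $A_0$ be the set of polynomials in $1$, the $g_n$ and their conjugates, with coefficients in $\bbQ+i\bbQ$. This set is countable. Its norm closure is a unital self-adjoint subalgebra that separates points, so by the Stone--Weierstrass theorem it equals $C(X)$. Hence $C(X)$ is separable.

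\emph{Separable implies metrizable.} Suppose $C(X)$ is separable, and let $\{f_n\}$ be a countable dense subset of the unit ball of $C(X)$. Define
\[
F\colon X\to \prod_n \overline{\mathbb D},\qquad F(x)=(f_n(x))_n,
\]
where $\overline{\mathbb D}$ is the closed unit disc. Each coordinate is continuous, so $F$ is continuous. To see that $F$ is injective, take $x\neq y$. By Urysohn's lemma there is $h$ in the unit ball with $h(x)=0$ and $h(y)=1$. Pick $f_n$ with $\|f_n-h\|<1/3$; then $f_n(x)\neq f_n(y)$. So $F$ is a continuous injection of a compact space into a Hausdorff space, and therefore a homeomorphism onto its image. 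The target $\prod_n\overline{\mathbb D}$ is metrizable, so $X$ is metrizable.

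Alternatively, one can define a metric on $X$ directly by $d(x,y)=\sum_n 2^{-n}|f_n(x)-f_n(y)|$. It is continuous on $X\times X$, and by the argument above it separates points. The identity map from the original topology to the $d$-topology is then a continuous bijection from a compact space to a Hausdorff space, hence a homeomorphism.

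There is no serious obstacle in either direction. The only points that need care are invoking Urysohn's lemma (which uses that compact Hausdorff spaces are normal) and the step from continuous injection to embedding via compactness.
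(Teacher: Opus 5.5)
Your proof is correct and is essentially the paper's argument: the metrizable-to-separable direction is the same Stone--Weierstrass argument over $\bbQ+i\bbQ$, and your embedding $x\mapsto (f_n(x))_n$ into $\prod_n\overline{\mathbb D}$ is the concrete point-evaluation version of the paper's observation that $X$ is a closed subspace of the weak*-dual unit ball of the separable algebra $C(X)$. That step needs weak*-\emph{metrizability} of the ball, not just separability (the paper writes ``separable'', which alone would not suffice, as $\beta\bbN$ shows), and your injectivity-plus-compactness argument supplies exactly that.
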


\begin{proof}
	If $A$ is a unital, separable, Banach space then the unit ball of its dual is separable in the weak*-topology, and $X$ is a closed subspace of the dual unit ball of $C(X)$. 
	For the converse, if $X$ is compact and metrizable then a countable family $[0,1]$-valued functions separates points of $X$. The $\bbQ+i\bbQ$ *-algebra generated by this set and the constant functions is, by the complex Stone--Weierstrass theorem, dense in $C(X)$. 
\end{proof}

By combining Stone and Gelfand--Naimark dualities one obtains the (covariant) equivalence of categories of zero-dimensional compact Hausdorff spaces and Boolean algebras. This equivalence can easily be proved directly. If $X$ is a compact, zero-dimensional, Hausdorff space, then $\Clop(X)$ is isomorphic to the algebra of projections of $C(X)$. Here a \emph{projection} in a \cstar-algebra is an element $p$ such that $p=p^*=p^2$, and  projections in a commutative \cstar-algebra form a Boolean algebra. 

\subsubsection{Locally compact Hausdorff spaces}
If $X$ is a locally compact, non-compact, Hausdorff space, then 
\[
C_0(X)=\{f\colon X\to \bbC: f\text{ is continuous and vanishes at infinity}\}
\]
is a commutative, non-unital, \cstar-algebra, and every commutative, non-unital, \cstar-algebra is of this form. 
One does not, however, have an equivalence of categories, because not every *-homomorphism between $C_0(Y)$ and $C_0(X)$ corresponds to a continuous function from $X$ to $Y$ (\cite[II.2.2.7]{Black:Operator}). 

There are two standard constructions that embed a non-unital \cstar-algebra into a unital one, the minimal and the maximal one. 

\begin{definition} Suppose that $A$ is a non-unital \cstar-algebra. 
	The \emph{unitization} of $A$, denoted 
	$A^\dagger$, is the algebra defined on the Banach space $A\oplus \bbC$ 
	of formal sums $a+\lambda\cdot  1_A$ 
	for $a\in A$ and $\lambda\in \bbC$.  The adjoint operation is defined pointwise, the addition and scalar multiplication are inherited from the underlying Banach space, and 
	\[
	(a+\lambda\cdot 1_A) (b+\mu\cdot 1_A):=ab+\lambda b + \mu a+ \lambda\mu\cdot 1_A. 
	\]
We identify $A$ with the subalgebra $a+0\cdot 1_A$, for $a\in A$. 
\end{definition}

Suppose that $A$ is a non-unital, commutative \cstar-algebra. Then $A^\dagger$ is isomorphic to $C(X)$ for the compact Hausdorff space $X=\hat A$. Straightforward computation now shows that $A$ is isomorphic to $C_0(X\setminus \{\varphi\})$, where $\phi(a+\lambda 1_A)=\lambda$. 

\begin{definition}
Now suppose that $X$ is a completely regular topological space and let 
\[
C_b(X)=\{f\colon X\to \bbC:f\text{ is continuous and bounded} \}. 
\]
 \end{definition}

By the assumption, the functions in $C_b(X)$ separate points of $X$. It is straightforward to check that $C_b(X)$ is a unital, commutative, \cstar-algebra. Its spectrum $Z$ is a compactification of $X$ with the property that every continuous, bounded,  complex-valued (or real-valued) function has a unique continuous extension to $Z$. By one of the well-known characterizations of  the \v Cech--Stone compactification of $X$, $\beta X$, we have the following.  

\begin{lemma}
	If $X$ is a completely regular Hausdorff space, then the Gelfand spectrum of $C_b(X)$ is homeomorphic to $\beta X$. \qed 
\end{lemma}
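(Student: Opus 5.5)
Let $Z=\widehat{C_b(X)}$ be the Gelfand spectrum. By Theorem~\ref{T.GN.Duality}, $C_b(X)\cong C(Z)$ via the Gelfand transform $f\mapsto\hat f$, where $\hat f(\varphi)=\varphi(f)$. I will show that $Z$ is a compactification of $X$ to which every bounded continuous function on $X$ extends. The conclusion then follows from the characterization of $\beta X$ quoted just before the statement: $\beta X$ is the unique (up to a homeomorphism fixing $X$) compactification of $X$ in which every bounded continuous complex-valued function on $X$ extends continuously.

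First, define $e\colon X\to Z$ by letting $e(x)=\mathrm{ev}_x$, the evaluation character $f\mapsto f(x)$. This is a unital *-homomorphism, so it indeed lands in $Z$. It is continuous into the weak*-topology, because $x\mapsto \mathrm{ev}_x(f)=f(x)$ is continuous for each $f\in C_b(X)$. It is injective because $X$ is completely regular, so $C_b(X)$ separates points.

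Next, $e$ is a homeomorphism onto its image. Let $U\ni x$ be open in $X$. Complete regularity gives $f\in C_b(X)$, $[0,1]$-valued, with $f(x)=1$ and $f=0$ off $U$. Then $\{\varphi:|\varphi(f)-1|<1/2\}$ is a weak*-open set whose trace on $e[X]$ is contained in $e[U]$. This shows $e[U]$ is relatively open in $e[X]$.

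Then I show that $e[X]$ is dense in $Z$. If it were not, Urysohn's lemma in the compact space $Z$ would give a nonzero $g\in C(Z)$ vanishing on $\overline{e[X]}$. Writing $g=\hat f$ with $f\in C_b(X)$, we get $f(x)=\hat f(e(x))=0$ for all $x$. Hence $f=0$, so $g=0$, a contradiction. Equivalently, the restriction map $C(Z)\to C_b(X)$ is injective, which mirrors Theorem~\ref{T.GN.Duality}(1).

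Finally, the extension property is built in. For $f\in C_b(X)$, the function $\hat f$ is continuous on $Z$ and satisfies $\hat f\circ e=f$. The extension is unique by density.

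The only step needing real care is the embedding, that is, the openness of $e$ onto its image. This is exactly where complete regularity, and not just the Hausdorff property, is used. Everything else is bookkeeping with the Gelfand transform.
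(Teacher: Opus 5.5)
Your proof is correct and takes the same route as the paper. The paper also observes that the spectrum $Z$ is a compactification of $X$ to which every bounded continuous function extends, and then invokes the standard characterization of $\beta X$; you have supplied the details it calls straightforward, namely the evaluation embedding $x\mapsto \mathrm{ev}_x$, density via Urysohn's lemma and surjectivity of the Gelfand transform, and extension via $\hat f\circ e=f$.
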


The algebra $C_b(X)$ is a special case of an important construction. 

\begin{lemma}\label{L.Multiplier}
	The \emph{multiplier algebra} of a commutative \cstar-algebra $C_0(X)$ is defined 
	as 
	\[
	\cM(A)=\{g\colon X\to \bbC: gf\in C_0(X)\text{ for all }f\in C_0(X)\}. 
	\]
The multiplier algebra of $C_0(X)$ is the algebra 
$C_b(X)$ of all bounded, complex-valued, continuous functions on $X$. 
\end{lemma}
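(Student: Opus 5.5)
We have to show two inclusions: $C_b(X)\subseteq \cM(C_0(X))$ and $\cM(C_0(X))\subseteq C_b(X)$. Throughout, $X$ is locally compact Hausdorff, and the multiplier algebra consists of functions $g\colon X\to\bbC$, a priori neither continuous nor bounded, such that $gf\in C_0(X)$ for all $f\in C_0(X)$.

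\emph{The inclusion $C_b(X)\subseteq\cM(C_0(X))$.} Take $g\in C_b(X)$ and $f\in C_0(X)$. The product $gf$ is continuous. Given $\varepsilon>0$, the set $K=\{x:|f(x)|\geq \varepsilon/(\|g\|+1)\}$ is compact. Off $K$ we have $|gf|<\varepsilon$, so $gf$ vanishes at infinity.

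\emph{Continuity of a multiplier $g$.} Fix $x\in X$. By local compactness and Urysohn's lemma for locally compact Hausdorff spaces, pick $f\in C_c(X)\subseteq C_0(X)$ with $f\equiv 1$ on an open neighbourhood $U$ of $x$. Then $g\restriction U=(gf)\restriction U$ is continuous on $U$. Since $x$ was arbitrary and continuity is local, $g$ is continuous.

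\emph{Boundedness of a multiplier $g$.} This is the step requiring an actual argument. I plan a direct diagonal construction. Suppose $g$ is unbounded and choose points $x_n$ with $|g(x_n)|\geq 4^n$.

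\begin{enumerate}
\item First, I handle the case where some subsequence of $(x_n)$ has a cluster point $x$. This is impossible, since $g$ is continuous and hence bounded on a compact neighbourhood of $x$.
\item So I may assume $(x_n)$ is closed and discrete, and escapes every compact set. Passing to a subsequence, I pick pairwise disjoint relatively compact open neighbourhoods $U_n\ni x_n$. These can be chosen inductively: each compact $\overline{U_1\cup\dots\cup U_n}$ contains only finitely many $x_m$, which I discard.
\item Take $f_n\in C_c(X)$ with $0\leq f_n\leq 1$, $f_n(x_n)=1$ and $\operatorname{supp} f_n\subseteq U_n$.
\item Define $f=\sum_n 2^{-n}f_n$.
\end{enumerate}

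This series converges uniformly, and each partial sum lies in $C_0(X)$, which is closed. Hence $f\in C_0(X)$. However, $|g(x_n)f(x_n)|\geq 2^n$, so $gf$ is unbounded and $gf\notin C_0(X)$, a contradiction.

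Alternatively, boundedness follows from the closed graph theorem applied to the operator $f\mapsto gf$ on $C_0(X)$: evaluation at $x$ shows the graph is closed, and the resulting norm bound satisfies $\|g\|_\infty\leq\|M_g\|$, using functions $f$ with $f(x)=1$ and $\|f\|=1$. I expect this step to be the only real obstacle. In the write-up I would use the closed graph argument, since it avoids the bookkeeping of choosing disjoint neighbourhoods.
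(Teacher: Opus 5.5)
Your proof is correct. The inclusion $C_b(X)\subseteq\cM(C_0(X))$ and the continuity step are the same as the paper's; the paper also uses a compactly supported $f$ equal to $1$ near $x$.

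\textbf{Where the routes differ.} For boundedness the paper also argues by contradiction, but it builds the test function differently. It prescribes $f_0(x_n)=\sqrt n/g(x_n)$ on the sequence, observes that $f_0$ is continuous and vanishes at infinity, and extends it to some $f\in C_0(X)$, so that $|fg|(x_n)=\sqrt n$. That route needs two things:
\begin{enumerate}
\item an extension theorem for $C_0(X)$, for instance Tietze on the one-point compactification;
\item the fact that $\{x_n\}$ has no limit points, so that $f_0$ is continuous on a closed set. This is where continuity of $g$ is used.
\end{enumerate}
Your series $\sum_n 2^{-n}f_n$ needs no extension theorem. Membership in $C_0(X)$ follows at once from uniform convergence of compactly supported functions.

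\textbf{Your steps 1 and 2 are unnecessary.} Since the $f_n$ are nonnegative, $f(x_n)\geq 2^{-n}f_n(x_n)=2^{-n}$ whether or not the supports overlap. So any bumps $f_n\in C_c(X)$ with $0\leq f_n\leq 1$ and $f_n(x_n)=1$ already give $|gf|(x_n)\geq 2^n$. This makes the boundedness argument independent of the continuity step. It also removes the bookkeeping you gave as your reason for preferring the closed graph theorem.

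\textbf{The closed graph route.} That argument is correct too, including the bound $\|g\|_\infty\leq\|M_g\|$ via norm-one bumps with $f(x)=1$. It is more conceptual, and it is the argument that carries over to abstract multiplier algebras, namely automatic boundedness of double centralizers, where no pointwise construction is available. The cost is invoking Baire category where a two-line elementary construction suffices.
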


\begin{proof}
Clearly $C_b(X)\subseteq \cM(C_0(X))$, and we need only prove the converse inclusion. 

Fix $g\colon X\to \bbC$. 
Since $X$ is locally compact, if $g$ is discontinuous then there is a compactly supported and continuous $f\colon X\to \bbC$ such that $fg$ is discontinuous. 
Now suppose $g$ is unbounded and fix a sequence $x_n\in X$ such that $|g(x_n)| >n$. 
Define $f_0(x_n)=\sqrt n/g(x_n)$, and $f_0(y)=0$ for every limit point $y$ of the set $\{x_n: n\in \bbN\}$. Then $f_0$ is continuous and it vanishes at infinity. Pick any continuous extension $f\in C_0(X)$ of $f_0$. Then $fg$ is unbounded. 
This completes the proof that $\cM(C_0(X))\subseteq C_b(X)$. 
\end{proof}

\begin{funfact} A subspace $Y$ of  $X$ is called \cstar-embedded  (\cite[1.16]{gillmanrings}) if 
	\[
	C_b(Y)=\{f\rs Y: f\in C_b(X)\}.
	\] 
	Although in \cite{gillmanrings} the scalars are real, since every complex-valued function corresponds to two real-valued functions, this makes no difference. 
\end{funfact}

\begin{remark}
	Curious readers will have noticed that the definition of a multiplier algebra of a non-unital and non-commutative \cstar-algebra $A$ has to be more complicated. Although we will not need it in this article, here it is.  
	One needs to fix a representation $\pi$ of $A$ on a Hilbert space $H$ (that is, $\pi\colon A\to \cB(H)$ is a *-homomorphism). The representation $\pi$ is \emph{faithful} if its kernel is trivial (i.e., $\{0\}$; equivalently, if $\pi$ is isometric) and \emph{non-degenerate} if the annihilator 
	\[
	\{b\in \cB(H): ba=ab=0\text{ for all $a\in A$}\}
	\] 
	of $\pi[A]$ is trivial (again, $\{0\}$). 
	Using a faithful and non-degenerate $\pi$, the \emph{multiplier algebra} $\cM(A)$ of $A$ is   
	defined as the \emph{idealizer} of $\pi[A]$, 
	\[
	\cM(A)=\{b\in \cB(H): ba,ab\in A\text{ for all }a\in A\}. 
	\]
	This is only one of the ways to define $\cM(A)$, and it can be proved that~$\cM(A)$ and the embedding of $A$ into it do not depend on the choice of the representation. See \cite[\S 13]{Fa:STCstar} for details and \cite[II.7.3]{Black:Operator} for alternative definitions of the multiplier algebra. 
\end{remark}

\begin{lemma}
	If $X$ is a locally compact Hausdorff space, then $\cM(C_0(X))$ is isomorphic to $C(\beta X)$, the quotient $\cM(C_0(X))/C_0(X)$ is isomorphic to $C(\beta X\setminus X)$. If $\iota$ is the identity map from $\beta X\setminus X$ into $\beta X$, then its dual is the quotient map $q\colon \cM(C_0(X))\to \cM(C_0(X))/C_0(X)$. \qed 
\end{lemma}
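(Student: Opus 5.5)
The plan has three steps, one for each clause of the lemma.

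\emph{Step 1: $\cM(C_0(X))\cong C(\beta X)$.} By Lemma~\ref{L.Multiplier}, $\cM(C_0(X))=C_b(X)$. By the preceding lemma, the Gelfand spectrum of $C_b(X)$ is $\beta X$; concretely, every $g\in C_b(X)$ has a unique continuous extension $\beta g$ to $\beta X$. The map $g\mapsto \beta g$ is a *-isomorphism $C_b(X)\to C(\beta X)$, whose inverse is restriction to the dense subset $X$. Density of $X$ gives injectivity of restriction and isometry of the map, since $\sup_X|g|=\sup_{\beta X}|\beta g|$.

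\emph{Step 2: identify $C_0(X)$ as a kernel and show $q$ is the dual of $\iota$.} Under the isomorphism of Step~1, $C_0(X)$ corresponds to the ideal
\[
J=\{h\in C(\beta X): h\rs(\beta X\setminus X)=0\}.
\]
The inclusion $C_0(X)\subseteq J$: take $f\in C_0(X)$ and $y\in\beta X\setminus X$. For each $\varepsilon>0$ the set $K_\varepsilon=\{|f|\geq\varepsilon\}$ is compact in $X$, hence closed in $\beta X$. So $y$ has a neighbourhood missing $K_\varepsilon$, and by density of $X$ we get $|\beta f(y)|\leq\varepsilon$. The reverse inclusion: if $h$ vanishes on the remainder, then $\{|h|\geq\varepsilon\}$ is a closed subset of $\beta X$ contained in $X$. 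It is therefore compact, so $h\rs X\in C_0(X)$. Local compactness of $X$ is used here to guarantee that $X$ is open in $\beta X$, so that $\beta X\setminus X$ is compact. Now $\iota_*\colon C(\beta X)\to C(\beta X\setminus X)$ is restriction, which is surjective by Theorem~\ref{T.GN.Duality}(2) (Tietze), and its kernel is exactly $J$.

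\emph{Step 3: the quotient.} By the first isomorphism theorem, $\iota_*$ induces an injective *-homomorphism $C(\beta X)/J\to C(\beta X\setminus X)$, which is also surjective. We need this algebraic isomorphism to be an isometry for the quotient norm, so that the quotient is identified as a \cstar-algebra. This is automatic, because injective *-homomorphisms between \cstar-algebras are isometric (cited in a footnote above). One can also see it directly: for $h\in C(\beta X)$, Tietze extension of the truncated restriction produces $h'$ with $h-h'\in J$ and $\|h'\|=\|h\rs(\beta X\setminus X)\|$. Composing with Step~1 gives $\cM(C_0(X))/C_0(X)\cong C(\beta X\setminus X)$, and under these identifications the quotient map $q$ is $\iota_*$, the dual of $\iota$.

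The main obstacle is Step~2, the identification of the kernel of restriction with $C_0(X)$. It is the only place where the topology of $\beta X$ (local compactness, and the closedness of compact subsets of $X$ in $\beta X$) actually enters; the other steps are formal consequences of earlier results.
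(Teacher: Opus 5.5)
Your proof is correct and is essentially the argument the paper intends: the paper states this lemma without proof, as an immediate consequence of Lemma~\ref{L.Multiplier}, the identification of the spectrum of $C_b(X)$ with $\beta X$, and Theorem~\ref{T.GN.Duality} (with Tietze giving surjectivity of $\iota_*$), and you have filled in the omitted details, notably that $C_0(X)$ is exactly the kernel of restriction to $\beta X\setminus X$ and that the induced map on the quotient is isometric. One small correction to your commentary: the two inclusions in Step~2 hold for any Tychonoff $X$, so local compactness is needed only to make $\beta X\setminus X$ closed (hence compact) in $\beta X$, which is what the Tietze arguments in Steps~2 and~3 use, and it also enters earlier through Lemma~\ref{L.Multiplier}.
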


\subsection{Model theory} Over the years, there were many connections between model theory and topology, from topological space-valued logic (\cite{chang1966continuous}) and  first-order logic of topological spaces (\cite{henson1977first}, to 
Bankston's study of ultracoproducts of compact Hausdorff spaces  (\cite{bankston2000some}, \cite{bankston1997co}, \cite{Bankston1987reduced}, \cite{bankstonsurvey}).  
Elementary submodels of models of large enough fragments of ZFC have been used in topology for decades (e.g., \cite{dow1988introduction}, \cite{dow1992set},  \cite{kunen2003compact}, \cite{bandlow1991characterization}, \cite{bandlow1991construction}, \cite{balogh1996small},  \cite{vandersteeg2003models}, \cite{bartovsovaa2011lelek},  \cite{eisworth2006elementary}).

We will use continuous model theory (\cite{BYBHU}, \cite{hart2023an}), as adapted to \cstar-algebras in~(\cite{Muenster}). 
The reader should be warned that the continuous logic as introduced in \cite{BYBHU} is the logic of bounded metric structures. Its standard adaptation to unbounded metric structures such as \cstar-algebras is explained, for example, in \cite[Example 3.10]{hart2023an} and in \cite[\S 2]{Muenster}. 
A quick introduction follows; for more details see any of the above references.

\subsubsection{Syntax and semantics}
\label{S.Syntax.LMS}

Our language consists of function symbols $+,\cdot, ^*$, a separate unary function for multiplication by each complex number, and the norm symbol $\|\cdot\|$.
\begin{enumerate}
	\item \label{I.Term} \emph{Terms} are expressions of the form $\|P(\bar x)\|$, where $\bar x$ is a tuple of variables and $P$ is a complex *-polynomial in commuting variables.
\end{enumerate}  
\emph{Quantifier-free formulas} are expressions of the form 
\begin{enumerate}[resume]
	\item\label{I.QF} $f(t_0,\dots, t_{n-1})$
\end{enumerate}
for $n\geq 1$, terms $t_j$, for $j<n$, and a continuous function $f\colon \bbR^n\to \bbR$.

The set of formulas is the smallest set $\bfF$ that satisfies the following. 
\begin{enumerate}[resume]
	\item All quantifier-free formulas belong to $\bfF$. 
	\item \label{I.Quantifiers} If $\varphi$ belongs to $\bfF$, $x$ is a variable, and $m\geq 1$, then $\sup_{\|x\|\leq m} \varphi$ and $\inf_{\|x\|\leq m}\varphi$ belong to $\bbF$. 
\end{enumerate}

	Free (and bound) variables in a formula are defined as usual, by induction on the complexity of the formula. 
	A formula with no free variables is called a \emph{sentence}\index{sentence!metric}. 	The set of all $\calL$-formulas whose free variables 
	are included in $\bar x = (x_0,\ldots,x_{n-1})$
	is denoted~$\bfF^{\bar x}$
	
Functions $f$ as in \eqref{I.QF} correspond to logical connectives,
$\vee$, $\wedge$, and $\leftrightarrow$. 
(There are no analogs of negation or implication in logic of metric 
structures.)
The `quantifiers' $\sup$ and $\inf$ in \eqref{I.Quantifiers} correspond to the quantifiers $\forall$ 
and $\exists$.  

Given a \cstar-algebra $C(X)$, a formula $\varphi(\bar x)$, and a tuple $\bar a$ in $C(X)$ of the same sort as $\bar x$, the interpretation $\varphi(\bar a)^{C(X)}$ is defined naturally by induction on the complexity of $\varphi$. For a sentence $\varphi$, some authors interpret the relation $\varphi^{C(X)}=0$ as `$\varphi$ is true in $C(X)$'. While this interpretation is somewhat contrived, the following closely related terminology makes some sense in light of Proposition~\ref{P.EmbeddingIntoUltrapower}.

\begin{definition}\label{D.Universal}
Formulas of the form $\sup_{\|x\|\leq m} \varphi$, where $\varphi$ is quantifier-free, are called \emph{universal}. 
\end{definition}

We sometimes expand the language as follows. 
\begin{definition}[Expanded language]\label{D.Expanded}
Given a commutative \cstar-algebra~$A$, we define terms and formulas in the expanded language $\calL_{A}$ as follows. 
For each element of $C(X)$ we introduce a constant (routinely and harmlessly identified with the element itself), and replace \eqref{I.Term} with the following. 
\begin{enumerate}[resume]
	\item \label{I.Term.expanded} \emph{Terms} in $\calL_A$ are expressions of the form $\|P(\bar x,\bar a)\|$ where $\bar x$ is a tuple of variables, $\bar a$ is a tuple of elements of $A$, and $P(\cdot, \cdot)$ is a complex *-polynomial in commuting variables.
\end{enumerate}  
\end{definition}

\subsubsection{Reduced coproducts, ultracoproducts, and cotheories}\label{S.ultracoproducts}
For a history of ultraproducts, merge the introductions  to \cite{bankstonsurvey} and \cite{She:Notes}. 
If $\cF$ is a filter on an index set $\bbI$ and $X_i$, for $i\in \bbI$, are compact Hausdorff spaces, then we define the reduced product $\prod_\cF C(X_i)$ as follows (see \cite[\S 16.2]{Fa:STCstar} for details).

Given a family of \cstar-algebras $A_i$, for $i\in \bbI$, the product is defined as the $\ell_\infty$-product rather than the Cartesian product (we'd rather have the product be a \cstar-algebra!): 
\[
\textstyle \prod_i A_i=\{(a_i): a_i\in A_i\text{ for all $i$ and $\sup_i \|a_i\|<\infty$}\}. 
\]
With $\limsup_{i\to \cF} r_i=\inf_{X\in \cF} \sup_{i\in X} r_i$, let 
\[
c_\cF=\{(a_i): \limsup_{i\to \cF} \|a_i\|=0\} .
\]  
This is a two-sided, norm-closed, self-adjoint ideal. Let
\[
\textstyle \prod_\cF  C(X_i)=\prod_i C(X_i)/c_\cF. 
\]
It should be noted that the fact that the quotient of a \cstar-algebra over a two-sided, norm-closed, self-adjoint ideal is a \cstar-algebra is nontrivial (\cite[Lemma~2.5.2]{Fa:STCstar}).

\begin{definition} If $\cF$ is a filter on an index set $\bbI$ and $X_i$, for $i\in \bbI$, is a family of compact Hausdorff spaces, then the \emph{reduced coproduct} $\sum_\cF X_n$ is defined to be the Gelfand spectrum of the reduced product $\prod_\cF C(X_n)$. 

	If $\cI$ is an ideal on $\bbI$, then $\sum_{\cI} X_i$ denotes the reduced coproduct associated with the filter dual to $\cI$, 
	\[
	\cI_*=\{A\subseteq \bbI: \bbI\setminus A\in \cI\}. 
	\]
	\end{definition}

In the following $\Fin$ denotes the  Fr\'echet filter on a fixed index set $\bbI$. 

\begin{lemma}\label{L.betaX}
	For a family of compact Hausdorff spaces $X_i$, for $i\in \bbI$, the reduced coproduct $\sum_{\Fin} X_i$ is homeomorphic (arguably, equal) to the \v Cech--Stone remainder of the discrete sum $\bigsqcup_i X_i$. 
If $\cF$ is a filter on $\bbI$ that includes all cofinite sets, then $\sum_\cF X_i$ is homeomorphic (arguably, equal) to a closed subspace of the \v Cech--Stone remainder of the discrete sum $\bigsqcup_i X_i$. 
\end{lemma}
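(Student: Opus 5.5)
Write $Y=\bigsqcup_i X_i$. This is a locally compact Hausdorff space, being a discrete sum of compact spaces. The plan is to identify $\prod_i C(X_i)$ with $C_b(Y)$ and $c_{\Fin}$ with $C_0(Y)$, and then apply the last lemma of \S\ref{S.GN} together with Gelfand--Naimark duality (Theorem~\ref{T.GN.Duality}).

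\textbf{Step 1: the product.} A function $g\colon Y\to\bbC$ is continuous if and only if each restriction $g\rs X_i$ is continuous, since the $X_i$ are clopen. It is bounded if and only if $\sup_i\|g\rs X_i\|<\infty$. Hence $g\mapsto (g\rs X_i)_i$ is a *-isomorphism of $C_b(Y)$ onto the $\ell_\infty$-product $\prod_i C(X_i)$.

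\textbf{Step 2: the ideal.} Under this isomorphism, $(a_i)$ lies in $c_{\Fin}$ if and only if for every $\varepsilon>0$ the set $\{i:\|a_i\|\geq\varepsilon\}$ is finite. This says exactly that $\{y:|g(y)|\geq\varepsilon\}$ is contained in finitely many $X_i$, hence is compact. Since every compact subset of $Y$ meets only finitely many $X_i$, this is equivalent to $g\in C_0(Y)$. So $\prod_{\Fin}C(X_i)\cong C_b(Y)/C_0(Y)=\cM(C_0(Y))/C_0(Y)$, using Lemma~\ref{L.Multiplier}. By the lemma preceding \S2.3 this is $C(\beta Y\setminus Y)$, and Gelfand--Naimark duality gives $\sum_{\Fin}X_i\cong\beta Y\setminus Y$.

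\textbf{Step 3: a general filter $\cF\supseteq\Fin$.} Because $\cF$ contains every cofinite set, $c_{\Fin}\subseteq c_\cF$. Indeed, if $\{i:\|a_i\|\geq\varepsilon\}$ is finite, its complement lies in $\cF$, so $\limsup_{i\to\cF}\|a_i\|\leq\varepsilon$. Therefore the quotient map $\prod_i C(X_i)\to\prod_\cF C(X_i)$ factors through a surjective *-homomorphism
\[
\textstyle \prod_{\Fin}C(X_i)\to\prod_\cF C(X_i).
\]
By Theorem~\ref{T.GN.Duality}(2), this surjection is $f_*$ for some injective continuous $f\colon\sum_\cF X_i\to\sum_{\Fin}X_i$. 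A continuous injection from a compact space into a Hausdorff space is a homeomorphism onto its image, which is closed. Combined with Step 2, this embeds $\sum_\cF X_i$ as a closed subspace of $\beta Y\setminus Y$.

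\textbf{Main obstacle.} The only real point to check is the identification $c_{\Fin}=C_0(Y)$ in Step 2. One has to verify that "vanishing at infinity" on a disjoint sum means that the sup-norms on the summands tend to $0$ along the Fréchet filter. The remaining steps are formal applications of the earlier results.
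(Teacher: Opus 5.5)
Your proposal is correct and follows the paper's route: identify $\prod_i C(X_i)$ with $C_b(Y)\cong C(\beta Y)$ and apply Gelfand--Naimark duality (Theorem~\ref{T.GN.Duality}). You also spell out two points the paper's one-line proof leaves implicit: the identification $c_{\Fin}=C_0(Y)$, and the surjection $\prod_{\Fin}C(X_i)\to\prod_\cF C(X_i)$, which by part (2) of that theorem yields the closed embedding of $\sum_\cF X_i$ into the remainder.
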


\begin{proof} Let $X=\bigsqcup_i X_i$. Then every bounded continuous function on $X$ has a unique extension to a continuous function on $\beta X$, hence $C(\beta X)$ is naturally identified with $\prod_i C(X_i)$.\footnote{Remember, this was defined as the $\ell_\infty$ product.} Apply Theorem~\ref{T.GN.Duality}.  \qed 
\end{proof}

If $\cU$ is an ultrafilter, then we talk about ultraproducts and ultracoproducts. 
The notation $\sum_\cF X_i$ is taken from \cite{Bankston1987reduced}, where this notion was introduced. 
 This definition did not use continuous model theory that did not exist at the time. The fact that the two definitions of ultracoproducts coincide serves as a testimony to the unreasonable effectiveness of continuous logic. 

The following direct construction of $\Sigma_\cF X_i$ can be extracted, more or less, from the proof of \cite[Theorem~3.1]{dow1993cech}. 

\begin{lemma}
For a family $X_i$, for $i\in \bbI$, of compact Hausdorff spaces and a filter $\cF$ on $\bbI$ (the closure of $\bigcup_{i\in A} X_i$ is taken inside the \v Cech--Stone compactification of the discrete sum $\bigsqcup_i X_i$),	
\[
\Sigma_\cF X_i\text{ is homeomorphic to }\textstyle  \bigcap_{A\in \cF} \overline{\bigcup_{i\in A} X_i}
\]
\end{lemma}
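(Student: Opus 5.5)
Let $X=\bigsqcup_i X_i$, $K=\bigcap_{A\in\cF}\overline{\bigcup_{i\in A}X_i}$, and identify $C(\beta X)$ with $\prod_i C(X_i)$ as in the proof of Lemma~\ref{L.betaX}. The plan is to show that, under this identification, the ideal $c_\cF$ is exactly the ideal of functions in $C(\beta X)$ vanishing on the closed set $K$. Granting this, the restriction map $C(\beta X)\to C(K)$ is surjective by Tietze, so
\[
\textstyle\prod_\cF C(X_i)=\prod_i C(X_i)/c_\cF\cong C(K).
\]
By Theorem~\ref{T.GN.Duality} the Gelfand spectrum of $\prod_\cF C(X_i)$, that is, $\Sigma_\cF X_i$, is then homeomorphic to $K$, with the quotient map dual to the inclusion $K\hookrightarrow\beta X$. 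Note that $K$ is closed, being an intersection of closed sets.

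For $A\subseteq\bbI$ write $U_A=\bigcup_{i\in A}X_i$. This set is clopen in $X$, and its closure $\overline{U_A}$ is clopen in $\beta X$, with complement $\overline{U_{\bbI\setminus A}}$. This holds because the characteristic function of $U_A$ is bounded and continuous, so it extends continuously to $\beta X$.

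First inclusion: suppose $a=(a_i)\in c_\cF$ and fix $\varepsilon>0$. Then there is $A\in\cF$ with $\sup_{i\in A}\|a_i\|\le\varepsilon$. So $|a|\le\varepsilon$ on $U_A$, hence on $\overline{U_A}\supseteq K$. Since $\varepsilon$ was arbitrary, $a$ vanishes on $K$.

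Converse inclusion, which is the only step needing compactness: suppose $a$ vanishes on $K$ and fix $\varepsilon>0$. The set $F=\{x\in\beta X:|a(x)|\ge\varepsilon\}$ is compact and disjoint from $K=\bigcap_{A\in\cF}\overline{U_A}$. The sets $\beta X\setminus\overline{U_A}$ are open and cover $F$, so by compactness finitely many $A_1,\dots,A_n\in\cF$ suffice. Put $A=A_1\cap\dots\cap A_n\in\cF$. Then $\overline{U_A}\subseteq\bigcap_k\overline{U_{A_k}}$ is disjoint from $F$. Hence $\|a_i\|<\varepsilon$ for all $i\in A$, and since each $X_i$ is compact, $\sup_{i\in A}\|a_i\|\le\varepsilon$. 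This shows $\limsup_{i\to\cF}\|a_i\|=0$, so $a\in c_\cF$.

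I expect this compactness step to be the main, though mild, obstacle. The identity $\overline{U_{A\cap B}}=\overline{U_A}\cap\overline{U_B}$ (which follows from the clopen observation) is what makes the finite-intersection argument work. No assumption that $\cF$ contains the cofinite sets is needed: if it does not, $K$ may meet $X$, and the argument is unchanged.
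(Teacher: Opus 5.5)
Your proof is correct and follows the paper's route: identify $\prod_i C(X_i)$ with $C(\beta\bigsqcup_i X_i)$ and show that $c_\cF$ is exactly the ideal of functions vanishing on $\bigcap_{A\in\cF}\overline{\bigcup_{i\in A}X_i}$. You also supply the two details the paper leaves implicit: the compactness argument behind the paper's ``which is the same as'' step, and the Tietze surjectivity of the restriction map (your clopen observation is harmless but unneeded, since $\overline{U_A}\subseteq\bigcap_k\overline{U_{A_k}}$ already follows from $A\subseteq A_k$).
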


\begin{proof} We will prove that there is a natural \cstar-algebra isomorphism between $C(\Sigma_\cF X_i)$ and $C(\bigcap_{A\in \cF} \overline{\bigcup_{i\in A} X_i})$. 
	It will be convenient to write 
	\[
	\textstyle X_A=\overline{\bigcup_{i\in A} X_i}
	\]
	for $A\subseteq \bbI$. 
	Define $\Phi\colon \prod_i C(X_i)\to C(\beta \bigsqcup_i X_i)$ by letting $\Phi((f_i))$ be $\beta\sum_i f_i$, the unique continuous extension of the function equal to $f_i$ on $X_i$ to $\beta (\bigsqcup_i X_i)$. Then 
	$\limsup_{i\to \cF} \|f_i-g_i\|=0$ if and only if for every $\varepsilon>0$ the set $\{i: \|f_i-g_i\|\leq \varepsilon\}$ belongs to $\cF$. This is in turn equivalent to 
	\[
	(\forall \varepsilon>0)(\exists A\in \cF)\|\Phi((f_i))-\Phi((g_i)))\rs X_A \|\leq \varepsilon, 
	\]
	which is the same as 
	$(\Phi((f_i))-\Phi((g_i)))\rs \bigcap_{A\in \cF} X_A$. 
	
	Since the \cstar-algebras are isomorphic, so are their Gelfand spectra. 
\end{proof}

\begin{definition}\label{D.Theory} The theory of $C(X)$, $\Th(X)$, is the functional that sends every sentence $\varphi$ to $\varphi^{C(X)}$.\footnote{An alternative definition of a theory is as the kernel of this functional, $\{\varphi: \varphi^{C(X)}=0\}$. Two definitions are interchangeable.}
	
	If $X$ is a compact Hausdorff space, then the \emph{cotheory of~$X$, $\coTh(X)$}, is defined to be $\Th(C(X))$, the theory of $C(X)$. 
\end{definition}

 \L o\'s's Theorem holds for continuous logic (\cite[Theorem~16.2.8]{Fa:STCstar}), as does its analog for reduced products, the Feferman--Vaught theorem (\cite{ghasemi2016reduced}, \cite[Theorem~16.3.1]{Fa:STCstar}).

\begin{proposition}\label{P.KeislerShelah}
For compact Hausdorff spaces $X$ and $Y$ the following are equivalent. 
\begin{enumerate}
	\item $X$ and $Y$ have the same cotheory. 
	\item $X$ and $Y$ have homeomorphic ultracopowers. 
\end{enumerate}
If $X$ and $Y$ are metrizable and CH holds, then the above are equivalent to the following. 
\begin{enumerate}[resume]
	\item Ultracopowers of $X$ and $Y$  associated with any  nonprincipal ultrafilter on $\bbN$ are homeomorphic. 
\end{enumerate}
\end{proposition}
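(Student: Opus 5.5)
The plan is to transfer everything through the Gelfand--Naimark duality, Theorem~\ref{T.GN.Duality}, and then apply the continuous-logic versions of the Keisler--Shelah theorem and of the CH saturation argument. The dictionary is as follows. By definition $\coTh(X)=\Th(C(X))$. The ultracopower $\sum_\cU X$ is, by definition, the Gelfand spectrum of the ultrapower $C(X)^\cU=\prod_\cU C(X)$. Theorem~\ref{T.GN.Duality} says that $\sum_\cU X\cong\sum_\cV Y$ if and only if $C(X)^\cU\cong C(Y)^\cV$. Every statement about cotheories and ultracopowers therefore becomes a statement about theories and ultrapowers of the commutative \cstar-algebras $C(X)$ and $C(Y)$.

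For (2)$\Rightarrow$(1), I would use \L o\'s's Theorem in its continuous form (\cite[Theorem~16.2.8]{Fa:STCstar}). The diagonal embedding $C(X)\to C(X)^\cU$ is elementary, so $\Th(C(X))=\Th(C(X)^\cU)$. If $\sum_\cU X\cong\sum_\cV Y$, then $C(X)^\cU\cong C(Y)^\cV$. Isomorphic structures have the same theory, so $\coTh(X)=\coTh(Y)$.

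For (1)$\Rightarrow$(2), I would invoke the Keisler--Shelah theorem for continuous logic. It states that elementarily equivalent metric structures have isomorphic ultrapowers, and the isomorphism can be taken with respect to a single ultrafilter on a sufficiently large index set; this is \cite{BYBHU}, or Shelah's version via good ultrafilters. I expect this step to be the main obstacle. One has to make sure the theorem applies to unbounded structures such as \cstar-algebras. I would handle this by working with the unit balls, or with the many-sorted presentation of \cite[\S 2]{Muenster}: the ultrapower of the \cstar-algebra is the ultrapower of this bounded many-sorted structure, so the bounded theorem transfers. Given $C(X)^\cU\cong C(Y)^\cU$, Theorem~\ref{T.GN.Duality} then gives $\sum_\cU X\cong\sum_\cU Y$.

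For the CH clause, assume $X$ and $Y$ are metrizable. Lemma~\ref{L.MetrizableSeparable} shows that $C(X)$ and $C(Y)$ are separable. For a nonprincipal ultrafilter $\cU$ on $\bbN$, the ultrapowers $C(X)^\cU$ and $C(Y)^\cU$ are countably saturated (\cite[\S16]{Fa:STCstar}). Their density character is at most $2^{\aleph_0}$, which equals $\aleph_1$ under CH. A back-and-forth argument of length $\aleph_1$ shows that two elementarily equivalent, countably saturated metric structures of density character $\aleph_1$ are isomorphic. Hence (1) implies $C(X)^\cU\cong C(Y)^\cU$, and duality gives (3). The converse (3)$\Rightarrow$(2) is trivial.
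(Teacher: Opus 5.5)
Your proof is correct and is essentially the paper's argument. You use Gelfand--Naimark to translate, Keisler--Shelah for (1)$\Leftrightarrow$(2) (you also spell out the easy direction via \L o\'s's theorem), and, for the third clause, countable saturation of ultrapowers over $\bbN$ together with CH and the uniqueness of elementarily equivalent saturated models of density character $\aleph_1$, which your back-and-forth reproves. Your back-and-forth works verbatim for density character \emph{at most} $\aleph_1$, so it also covers the degenerate case of finite $X$ and $Y$, where the ultrapower is just $C(X)$ itself; this matches your ``at most $2^{\aleph_0}$'', whereas your stated lemma says ``density character $\aleph_1$''.
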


\begin{proof}
	The equivalence of the first two statements is a special case of the Keisler--Shelah theorem (see \cite[Theorem 5.7]{BYBHU}, \cite[Theorem~6.8]{hart2023an}; also \cite[Notes to Chapter 16]{Fa:STCstar}). 
	
 The equivalence of the first two statements with the third under the assumption that $X$ and $Y$ are metrizable and CH holds follows from two facts. The first is that every ultrapower associated to a nonprincipal ultrafilter on~$\bbN$ is countably saturated (\cite[Theorem 16.4.1]{Fa:STCstar}), and therefore saturated if of density character $\aleph_1$. The second fact (due to Keisler) asserts that two elementarily equivalent saturated models of the same density character are isomorphic (\cite[Corollary 16.6.5]{Fa:STCstar}).  
\end{proof}

In the following, `universal sentence' means a universal sentence of the language of unital \cstar-algebras (Definition~\ref{D.Universal}). 

\begin{proposition}\label{P.EmbeddingIntoUltrapower}
	For compact Hausdorff spaces $X$ and $Y$ the following are equivalent. 
	\begin{enumerate}
		\item \label{1.Embedding}
		For every universal sentence $\varphi$, $\varphi^{C(X)}\leq \varphi^{C(Y)}$. 
		\item \label{2.Embedding} Some ultracopower of $Y$ maps onto $X$. 
		\item \label{3.Embedding} $C(X)$ embeds into some ultrapower of $C(Y)$. 
	\end{enumerate}
	If $X$ is compact and metrizable, then the above are equivalent to the following. 
	\begin{enumerate}[resume]
		\item Some ultracopower of $Y$ associated with a nonprincipal ultrafilter on~$\bbN$ maps onto $X$. 
	\end{enumerate}
\end{proposition}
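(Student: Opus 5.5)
The plan is to establish the chain \eqref{2.Embedding}$\Leftrightarrow$\eqref{3.Embedding}$\Rightarrow$\eqref{1.Embedding}$\Rightarrow$\eqref{3.Embedding}. The equivalence of \eqref{2.Embedding} and \eqref{3.Embedding} is pure duality. By definition the ultracopower $\sum_\cU Y$ is the Gelfand spectrum of the ultrapower $\prod_\cU C(Y)$. By Theorem~\ref{T.GN.Duality}, an injective unital *-homomorphism $C(X)\to\prod_\cU C(Y)$ is the same thing as $f_*$ for a continuous surjection $f\colon\sum_\cU Y\to X$. One small point needs checking: embeddings of unital algebras into an ultrapower can be taken unital. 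This holds because the unit of $C(X)$ goes to a projection $p$, and we can compress (or simply work with unital embeddings throughout, as the duality demands).

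For \eqref{3.Embedding}$\Rightarrow$\eqref{1.Embedding}, take a universal sentence $\sup_{\|\bar x\|\leq m}\psi(\bar x)$ with $\psi$ quantifier-free. Quantifier-free formulas are evaluated identically in a subalgebra and in the ambient algebra, because norms of *-polynomials are preserved by isometric *-embeddings. Hence the supremum over the larger ball in $\prod_\cU C(Y)$ is at least as big as the supremum in $C(X)$. By \L o\'s's theorem, $\varphi^{\prod_\cU C(Y)}=\varphi^{C(Y)}$. Together these give $\varphi^{C(X)}\leq\varphi^{C(Y)}$.

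The main step is \eqref{1.Embedding}$\Rightarrow$\eqref{3.Embedding}, which is a continuous-logic version of the fact that the universal theory determines embeddability into an ultrapower. Index by pairs $i=(F,\varepsilon)$, where $F$ is a finite subset of $C(X)$ and $\varepsilon>0$. For each $i$ we want a tuple $\bar b_i$ in $C(Y)$ such that $\bigl|\|P(\bar b_i)\|-\|P(\bar a)\|\bigr|<\varepsilon$ for all *-polynomials $P$ from a finite list, where $\bar a$ enumerates $F$. To get it, let $\psi(\bar x)=\max_P\bigl|\|P(\bar x)\|-\|P(\bar a)\|\bigr|$, which is quantifier-free, and consider the universal sentence $\sup_{\bar x}\min(\psi(\bar x),1)$ with the constants removed. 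Here we must handle the fact that the values $\|P(\bar a)\|$ are reals appearing inside the connective; this is allowed, since connectives are arbitrary continuous functions. The trick is to use $\inf$ rather than $\sup$, which we get via the sentence $\varphi=\sup_{\bar x}(1-\min(\psi(\bar x),1))$. Then $\varphi^{C(X)}=1$ (witnessed by $\bar a$), and \eqref{1.Embedding} gives $\varphi^{C(Y)}\geq 1$, so some $\bar b_i$ has $\psi(\bar b_i)$ arbitrarily small.

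Take an ultrafilter $\cU$ on the directed index set containing all cones. Map $a\mapsto(b_i(a))_i$, setting the coordinate to $0$ when $a\notin F$, and pass to $\prod_\cU C(Y)$. Approximate preservation of the polynomial norms in the limit becomes exact, which yields an isometric unital *-homomorphism; unitality is obtained by including $1$ and $\|x-1\|$-type conditions in the list.

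For the metrizable clause, $C(X)$ is separable by Lemma~\ref{L.MetrizableSeparable}. Choose a countable dense set and an increasing sequence of finite subsets with $\varepsilon=1/n$. The same construction then runs with index set $\bbN$ and any nonprincipal ultrafilter, and we finish by dualizing as before.
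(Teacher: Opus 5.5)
Your proof is correct. For (2)$\Leftrightarrow$(3) and (3)$\Rightarrow$(1) it matches the paper's argument: duality for the first, and for the second, preservation of quantifier-free formulas under unital embeddings combined with \L o\'s's theorem. The difference is in (1)$\Rightarrow$(3). The paper does not argue this step; it invokes \cite[Theorem~2.3.5]{Muenster}, which is phrased in terms of atomic diagrams. You reprove that result directly. A finite fragment of the atomic diagram of $C(X)$ that is not approximately realized in $C(Y)$ would be refuted by the universal sentence $\sup_{\|\bar x\|\leq m}(1-\min(\psi(\bar x),1))$, which has value $1$ in $C(X)$. You then glue the approximate realizations along an ultrafilter containing all cones of the directed set of pairs $(F,\varepsilon)$.

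The citation is shorter and applies verbatim to arbitrary metric structures. Your version is self-contained and shows exactly where the universal theory is used. It also handles the metrizable clause, which the paper's proof leaves unaddressed: separability lets you replace the directed set by $\bbN$, and then every nonprincipal ultrafilter works.

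A few details need tidying.
\begin{enumerate}
\item Your ``compress'' remark does not give what you want. Compressing by $p=\Phi(1)$ yields a unital embedding into the corner $p(\prod_\cU C(Y))p$, which is not an ultrapower of $C(Y)$. If nonunital embeddings are allowed in (3), use instead $f\mapsto \Phi(f)+f(x_0)(1-p)$ for a fixed $x_0\in X$. Dually, this sends the clopen complement of the support of $p$ to $x_0$. In any case, the unital reading of ``embeds'' is the intended one, and it is the one your argument for (3)$\Rightarrow$(1) uses.
\item The finite list of *-polynomials should be determined by $F$. For instance, with $1\in F$, take the $x_j$, $x_0-1$, and the polynomials $x_j+x_k-x_l$, $x_jx_k-x_l$, $x_j^*-x_l$, $\lambda x_j-x_l$ expressing the relations that hold among elements of $F$. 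The quantifier bound $m$ should dominate $\max_{a\in F}\|a\|$.
\item In the separable case your map is defined only on the countable dense set. Take that set to be a unital $\bbQ+i\bbQ$ *-subalgebra and extend by continuity, exactly as in the proof of Lemma~\ref{L.GromovHausdorf-convergence}.
\end{enumerate}
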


\begin{proof} \eqref{2.Embedding} and \eqref{3.Embedding} are equivalent by Theorem~\ref{T.GN.Duality}, and \eqref{3.Embedding} trivially implies \eqref{1.Embedding}. 
	\eqref{1.Embedding} implies \eqref{2.Embedding} by \cite[Theorem 2.3.5]{Muenster} (the latter is stated in terms of atomic diagrams). 
\end{proof}

\subsubsection{Types and saturation}
A detailed discussion of types and saturation in continuous logic is given in \cite[\S 16.1]{Fa:STCstar}. 
While in classical (discrete) logic a type is a set of formulas, in continuous logic the value of a formula is a number. We therefore consider a type as a set of conditions (see however the alternative view of a type as a functional, \cite[Appendix C]{Fa:STCstar}). 
A \emph{(closed) condition} in an $n$-tuple of variables $\bar x=(x_0,\dots, x_{n-1})$ is an expression of one of the forms 
\[
\varphi(\bar x)\leq r, \qquad \varphi(\bar x)=r, \qquad\varphi(\bar x)\geq r. 
\]
We allow the formula $\varphi$ to be in the expanded language $\calL_{C(X)}$ (Definition~\ref{D.Expanded}).
Given a condition $\bfC$ in $\calL_{C(X)}$ and $\bar a$ in $C(X)$ of the appropriate sort, the notions `$\bar a$ satisfies $\bfC$' and `$\bar a$ $\varepsilon$-satisfies $\bfC$' are defined naturally. 

A \emph{type over $C(X)$ in variables $\bar x$} is a set $\bt(\bar x)$ of conditions in $\calL_{C(X)}$ in~$\bar x$. 
We say that $\bt(\bar x)$ is \emph{realized} in $C(X)$ if some tuple $\bar a$ in $C(X)$ satisfies all conditions in $\bt(\bar x)$. A type $\bt(\bar x)$ is \emph{approximately realized} in $C(X)$ if for every finite subset $\bt_0$ of $\bt(\bar x)$ and every $\varepsilon>0$ some $\bar a$ in $C(X)$ $\varepsilon$-satisfies each condition in $\bt_0$. 

The \emph{density character} of $C(X)$ is the smallest cardinality of a dense subset. It is equal to the weight of $X$. 

We say that $C(X)$ is \emph{countably saturated} if every countable type over~$C(X)$ that is approximately realized in $C(X)$ is realized in $C(X)$. It is \emph{saturated} if every type of cardinality smaller than the density character of $C(X)$ that is approximately realized in $C(X)$ is realized in $C(X)$. 

For the following well known analog of even better known results in classical model theory see \cite[Theorem~16.4.1 and Theorem~16.5.1]{Fa:STCstar}. 

\begin{theorem} \label{T.Ultraproducs.Saturated} Suppose that $X_n$, for $n\in \bbN$, are compact Hausdorff spaces. If $\cU$ is a nonprincipal ultrafilter on $\bbN$, then every ultraproduct $\prod_\cU C(X_n)$ is countably saturated. 
	The reduced product $\prod_{\Fin} C(X_n)$ is countably saturated. 
	
	If in addition CH holds and all $X_n$ are metrizable, then 	
	$\prod_\cU C(X_n)$ and $\prod_{\Fin} C(X_n)$ are saturated.  \qed 
\end{theorem}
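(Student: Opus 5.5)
The plan is to prove the countable saturation statements by a diagonal construction, and then deduce the CH clause from a cardinality count.

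\textbf{Ultraproducts.} Fix a countable type $\bt(\bar x)=\{\bfC_k:k\in\bbN\}$ over $\prod_\cU C(X_n)$ that is approximately realized. We may assume that the variables range over a fixed ball $\|x_j\|\leq m$, as required by the syntax. Only countably many parameters from the ultraproduct occur in $\bt$. For each parameter choose a representing sequence, so that every condition $\bfC_k$ becomes, coordinatewise, a condition $\bfC_k^{(n)}$ over $C(X_n)$. Let $\bt_k=\{\bfC_0,\dots,\bfC_{k-1}\}$. Because $\bt_k$ is $1/k$-satisfiable in the ultraproduct, \L o\'s's Theorem gives
\[
A_k=\{n\geq k: \text{some tuple in } C(X_n) \text{ $1/k$-satisfies } \bfC_0^{(n)},\dots,\bfC_{k-1}^{(n)}\}\in\cU .
\]
Replacing $A_k$ by $A_0\cap\dots\cap A_k$, we may assume the sets $A_k$ are decreasing. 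Since $A_k\subseteq[k,\infty)$, we have $\bigcap_k A_k=\emptyset$. For $n\in A_0$ let $k(n)=\max\{k:n\in A_k\}$ and choose a tuple $\bar a_n$ that $1/k(n)$-satisfies $\bt_{k(n)}$ at coordinate $n$; for $n\notin A_0$ choose $\bar a_n$ arbitrarily. Fix $k$. For every $n\in A_k$ we have $k(n)\geq k$, so $\bar a_n$ $1/k$-satisfies $\bfC_0,\dots,\bfC_{k-1}$ at $n$. As $k$ is arbitrary, \L o\'s's Theorem shows that the class of $(\bar a_n)$ satisfies every $\bfC_j$ exactly.

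\textbf{Reduced product $\prod_{\Fin}C(X_n)$.} This is the main obstacle, because \L o\'s's Theorem is not available. I would use the Feferman--Vaught theorem for reduced products cited above. It expresses the value of a formula in $\prod_{\Fin}C(X_n)$ as a function of
\begin{itemize}
\item the values of finitely many associated formulas in the factors $C(X_n)$, and
\item the equivalence classes, in $\cP(\bbN)/\Fin$, of the sets of indices on which those values fall into prescribed ranges.
\end{itemize}
Using this, an approximately realized countable type should translate into a countable family of requirements. Each requirement asks for a choice of coordinates $\bar a_n$ satisfying constraints modulo finite, and these requirements are finitely consistent. The key steps are then:
\begin{enumerate}
\item perform the same diagonalization as in the ultraproduct case, choosing $\bar a_n$ to $1/k(n)$-satisfy the coordinatewise translation of $\bt_{k(n)}$, where $k(n)\to\infty$;
\item invoke the countable saturation of $\cP(\bbN)/\Fin$ (Hausdorff's theorem that there are no $(\omega,\omega)$-gaps) to handle the set-valued parts of the Feferman--Vaught translation.
\end{enumerate}
I expect the delicate point to be the bookkeeping of this translation. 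For quantifier-free conditions the diagonal argument goes through directly, since the norm in the reduced product is $\limsup_n$ of the coordinate norms.

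\textbf{The CH clause.} Each $X_n$ is metrizable, so each $C(X_n)$ is separable by Lemma~\ref{L.MetrizableSeparable}. Hence $\prod_n C(X_n)$, and therefore both of its quotients, has density character at most $2^{\aleph_0}$, which equals $\aleph_1$ under CH. (If the density character is $\aleph_0$, saturation coincides with countable saturation.) A type of cardinality smaller than $\aleph_1$ is countable, so countable saturation yields saturation.
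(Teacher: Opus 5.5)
Your ultraproduct diagonalization and the density-character count for the CH clause are correct. The paper gives no argument of its own here: it cites \cite[Theorems~16.4.1 and 16.5.1]{Fa:STCstar}, and \cite[Theorem~2.1]{FaSh:Rigidity} for the reduced product. Your ultraproduct argument is the standard one, with two cosmetic fixes. To get $A_k\in\cU$ with the non-strict bound $1/k$, start from a tuple that $1/(2k)$-satisfies $\bt_k$. Also take representing sequences whose coordinates lie in the $m$-ball.

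The reduced-product case, however, is left unproved, and the one concrete step you propose does not work. In $\prod_{\Fin}C(X_n)$ a condition is in general not equivalent to any condition on $\bar a_n$ holding for all but finitely many $n$. So there is no `coordinatewise translation of $\bt_{k(n)}$' for $\bar a_n$ to $1/k(n)$-satisfy. This fails already for quantifier-free types, contrary to your last remark. Because the norm is a $\limsup$, a lower bound $\|P(\bar x)\|\geq r$ need only be witnessed on an infinite set of coordinates, and different lower bounds can be incompatible at every single coordinate. For instance, if each $X_n$ is a point then $\prod_{\Fin}C(X_n)=\ell_\infty/c_0$. The type $\|x-x^*\|=0$, $\|x-x^2\|=0$, $\|x\|=1$, $\|1-x\|=1$ is realized there by the class of $\chi_{2\bbN}$, while for small $\varepsilon>0$ no scalar $\varepsilon$-satisfies it.

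The missing idea is that you must first decide, for each $n$, which `infinitely often' requirement the coordinate $n$ is responsible for; in general, which cell of the Boolean combination produced by Feferman--Vaught. This choice must satisfy two constraints:
\begin{enumerate}
\item the cell is approximately realizable in $C(X_n)$ together with the requirements that must hold cofinitely;
\item the resulting index sets satisfy, modulo finite, the Boolean relations the type demands.
\end{enumerate}
This means realizing a countable type in $\cP(\bbN)/\Fin$ whose parameters include, for each relevant profile of values, the set of $n$ at which that profile is approximately realizable in $C(X_n)$. Only then can you run the $k(n)\to\infty$ diagonalization. That is what \cite[Theorem~2.1]{FaSh:Rigidity} (or \cite[\S 16.5]{Fa:STCstar}) actually proves. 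Calling it `bookkeeping' leaves the countable saturation of $\prod_{\Fin}C(X_n)$ unproved, and with it the CH clause for the reduced product.
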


In the following, let (see Definition~\ref{D.Theory}---the theory is a functional---and note that the limit exists because for each sentence $\varphi$ the set of its possible values is a bounded subset of $\bbR$)
\[
\lim_{n\to \cU} \coTh(X_n)\text{ is the map } \varphi\mapsto \lim_{n\to \cU} \varphi^{C(X_n)},\text{ where $\varphi$ is a sentence}. 
\]
\L o\'s's theorem implies that this is the cotheory of $\sum_\cU X_n$. 

The limit $\lim_{n\to \infty} \coTh(X_n)$ is defined analogously, if $\lim_{n\to \infty} \varphi^{C(X_n)}$ exists for all $\varphi$. 

\begin{corollary}Suppose that $X_n$, for $n\in \bbN$, are compact metrizable spaces. 
	If CH holds then we have the following. 
	\begin{enumerate}
		\item \label{I1} Each one of $\sum_{\cU} X_n$ and $\sum_{\Fin} X_n$ has $2^{\aleph_1}$ autohomeomorphisms. 
		\item \label{I2} If $\lim_{n\to \cU} \coTh(X_n)=\lim_{n\to \cU} \coTh(Y_n)$ then $\sum_\cU X_n$ and $\sum_\cU Y_n$ are homeomorphic. 
\item \label{I3} If $\lim_{n\to \infty} \coTh(X_n)=\lim_{n\to \infty} \coTh(Y_n)$ then the \v Cech--Stone remainders of $\bigsqcup_n X_n$ and $\bigsqcup_n Y_n$ are homeomorphic. 
	\end{enumerate}
\end{corollary}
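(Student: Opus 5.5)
All three parts go through Gelfand--Naimark duality (Theorem~\ref{T.GN.Duality}): a homeomorphism of compact Hausdorff spaces is the same thing as a $*$-isomorphism of the corresponding $C(\cdot)$-algebras. Each part therefore becomes a statement about the reduced products $\prod_\cU C(X_n)$ and $\prod_{\Fin} C(X_n)$. The input is Theorem~\ref{T.Ultraproducs.Saturated}: under CH, with every $X_n$ metrizable, these algebras are saturated. Since each $C(X_n)$ is separable (Lemma~\ref{L.MetrizableSeparable}), both reduced products have cardinality, and hence density character, at most $\mathfrak c=\aleph_1$. If the density character were countable, saturation plus countable saturation would make the algebra finite-dimensional. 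In that degenerate case the ultraproduct is $C(F)$ for a finite set $F$, part \eqref{I1} must be read with that caveat, and I would set it aside. So we may assume density character exactly $\aleph_1$.

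For \eqref{I1}, I would use the standard back-and-forth fact for saturated structures: a saturated structure of density character $\aleph_1$ has $2^{\aleph_1}$ automorphisms. I would cite the continuous-logic version in \cite[\S 16.6]{Fa:STCstar} or sketch it directly. Enumerate a dense subset of size $\aleph_1$ and build partial elementary maps between separable elementary substructures by transfinite recursion of length $\omega_1$. At each successor stage, saturation lets us extend in at least two incompatible ways, because a nonalgebraic type over a separable substructure has at least two distinct realizations. This gives a tree of height $\omega_1$ that branches everywhere, so there are $2^{\aleph_1}$ automorphisms. The upper bound $2^{\aleph_1}$ holds because an automorphism is determined by its values on a dense set of size $\aleph_1$. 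Dualizing gives $2^{\aleph_1}$ autohomeomorphisms of $\sum_\cU X_n$ and of $\sum_{\Fin} X_n$.

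For \eqref{I2}, \L o\'s's theorem gives $\coTh(\sum_\cU X_n)=\lim_{n\to\cU}\coTh(X_n)$, and likewise for the $Y_n$. So the two ultraproducts $\prod_\cU C(X_n)$ and $\prod_\cU C(Y_n)$ are elementarily equivalent. Both are saturated of density character $\aleph_1$ by the first paragraph. Keisler's uniqueness theorem for saturated models (\cite[Corollary 16.6.5]{Fa:STCstar}) then makes them isomorphic, and duality finishes the argument.

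For \eqref{I3}, by Lemma~\ref{L.betaX} the remainders are the spectra of $\prod_{\Fin}C(X_n)$ and $\prod_{\Fin}C(Y_n)$, so it suffices to show these two algebras are elementarily equivalent. They are saturated of density character $\aleph_1$, so Keisler's theorem applies again. Elementary equivalence is the main obstacle. I would use the Feferman--Vaught theorem for reduced products \cite[Theorem~16.3.1]{Fa:STCstar}: the value of a sentence $\varphi$ in $\prod_{\Fin}C(X_n)$ is determined by the data $\lim_{n\to\infty}\psi^{C(X_n)}$ for the finitely many sentences $\psi$ that Feferman--Vaught associates with $\varphi$, together with the reduced power of $\cP(\bbN)/\Fin$. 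The point to check is that, for the Fréchet filter, the relevant quantity depends only on the asymptotic behaviour of $n\mapsto \psi^{C(X_n)}$, which is exactly what $\lim_n\coTh(X_n)$ records. An alternative route is to write $\prod_{\Fin}$ as an elementary object computed from the ultraproducts over all nonprincipal $\cU$. By hypothesis every such ultraproduct has cotheory equal to the common limit, and the values of sentences in $\prod_{\Fin}$ are then controlled by these ultraproduct values via Feferman--Vaught. Once elementary equivalence is established, saturation and Keisler's theorem give the isomorphism, and Theorem~\ref{T.GN.Duality} gives the homeomorphism of the remainders.
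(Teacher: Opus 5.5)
Your proposal is correct and follows the paper's proof. Saturation under CH comes from Theorem~\ref{T.Ultraproducs.Saturated}, Keisler's theorem on automorphisms of saturated models gives \eqref{I1}, \L o\'s's theorem plus uniqueness of saturated models (which is exactly what Proposition~\ref{P.KeislerShelah} packages) gives \eqref{I2}, and the Feferman--Vaught corollary together with Lemma~\ref{L.betaX} and saturation gives \eqref{I3}. Your caveat that \eqref{I1} fails for $\sum_\cU X_n$ when this space is finite (i.e., when $|X_n|$ is bounded on a set in $\cU$) is legitimate, and the paper's proof passes over it.
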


\begin{proof} By Theorem~\ref{T.Ultraproducs.Saturated} and CH, these ultracoproducts and reduced coproducts are saturated. By the continuous version of Keisler's theorem (\cite[Theorem~16.6.3]{Fa:STCstar}), every saturated model has the maximal number of automorphisms and \eqref{I1} follows. \eqref{I2} follows by \L o\'s's Theorem and 
Proposition~\ref{P.KeislerShelah}.  	
	A corollary to the Feferman--Vaught theorem (\cite{ghasemi2016reduced}, \cite[Theorem~16.3.1]{Fa:STCstar}) implies that if $\lim_{n\to \infty} \coTh(X_n)=\lim_{n\to \infty} \coTh(Y_n)$, then (using Lemma~\ref{L.betaX}) we have that the spaces $\sum_{\Fin} X_n=\beta \bigsqcup_n X_n\setminus \bigsqcup_n X_n$ and $\sum_{\Fin} Y_n=\beta\bigsqcup_n Y_n\setminus Y_n$ are coelementarily equivalent and \eqref{I3} again follows by  Theorem~\ref{T.Ultraproducs.Saturated}. 
	\end{proof}

\section{Case study: Applications to continua}

The category of connected, compact Hausdorff spaces is dual to the (axiomatizable) category of unital, commutative, \cstar-algebras with no projections other than $0$ and $1$. 
The following is \cite[Lemma~2.1]{hart2005there}, with a proof recast using the Gelfand--Naimark duality,  proved at the end of this section,  instead of the language of the lattice of closed sets. 

\begin{proposition}\label{P.Continuum}
	For every compact Hausdorff space $X$, the following are equivalent. 
	\begin{enumerate}
		\item \label{1.P.Continuum} $X$ is connected. 
		\item \label{2.P.Continuum} An ultracopower of $[0,1]$ maps onto $X$. 
		\item \label{3.P.Continuum} $C(X)$ embeds into an ultrapower of $C([0,1])$. 
	\end{enumerate} 
	In particular, every continuum has the same universal cotheory as $[0,1]$.
\end{proposition}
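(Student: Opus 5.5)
The plan is to prove \eqref{2.P.Continuum}$\Leftrightarrow$\eqref{3.P.Continuum} by Theorem~\ref{T.GN.Duality}, \eqref{3.P.Continuum}$\Rightarrow$\eqref{1.P.Continuum} by a projection-lifting argument, and \eqref{1.P.Continuum}$\Rightarrow$\eqref{3.P.Continuum} through Proposition~\ref{P.EmbeddingIntoUltrapower} by approximating tuples in $C(X)$ by tuples in $C([0,1])$. First, \eqref{2.P.Continuum}$\Leftrightarrow$\eqref{3.P.Continuum} is exactly Theorem~\ref{T.GN.Duality}: a surjection from the ultracopower onto $X$ is dual to a unital injective *-homomorphism of $C(X)$ into $\prod_\cU C([0,1])$.

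For \eqref{3.P.Continuum}$\Rightarrow$\eqref{1.P.Continuum}, recall that $X$ is connected if and only if $C(X)$ has no projections other than $0$ and $1$. A unital embedding sends projections to projections and fixes $0$ and $1$. So it suffices to show that $\prod_\cU C([0,1])$ has no nontrivial projections.
\begin{itemize}
\item Let $p$ be a projection there, represented by $(a_n)$. Replacing $a_n$ by $(a_n+a_n^*)/2$, we may assume each $a_n$ is self-adjoint, and $\|a_n-a_n^2\|\to 0$ along $\cU$.
\item For such $a_n$, the spectrum, i.e.\ the range of the real-valued function $a_n$, lies in a small neighbourhood of $\{0,1\}$. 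Since $[0,1]$ is connected, this range lies near $0$ or near $1$.
\item Hence $\|a_n\|\to 0$ or $\|a_n-1\|\to 0$ along $\cU$, so $p=0$ or $p=1$.
\end{itemize}
Equivalently, this is \L o\'s's theorem applied to the universal sentence expressing that every approximate projection is near $0$ or $1$.

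For \eqref{1.P.Continuum}$\Rightarrow$\eqref{3.P.Continuum}, I will use Proposition~\ref{P.EmbeddingIntoUltrapower}. It suffices to show $\varphi^{C(X)}\leq \varphi^{C([0,1])}$ for every universal sentence $\varphi=\sup_{\|\bar x\|\leq m}\psi(\bar x)$ with $\psi$ quantifier-free.
\begin{itemize}
\item Fix a tuple $\bar a=(a_0,\dots,a_{k-1})$ in $C(X)$ with $\|a_j\|\leq m$, and fix $\varepsilon>0$. It suffices to find $\bar b$ in $C([0,1])$ with $\|b_j\|\leq m$ and $|\psi(\bar a)-\psi(\bar b)|<\varepsilon$.
\item Let $F=(a_0,\dots,a_{k-1})\colon X\to\bbC^k$ and $K=F[X]$. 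Then $K$ is a continuum in the polydisc $D_m=\{z:|z_j|\leq m\}$.
\item Every term satisfies $\|P(\bar a)\|=\sup_{z\in K}|P(z,\bar z)|$, and $\psi$ is a continuous function of finitely many terms. The polynomials are uniformly continuous on $D_m$. Hence there is $\delta>0$ such that any compact $L\subseteq D_m$ within Hausdorff distance $\delta$ of $K$ gives $\psi$-values within $\varepsilon$.
\item Since $K$ is connected, it is $\delta$-chainable: a finite $\delta$-net of $K$ can be listed as $z_0,\dots,z_l$ with consecutive points at distance less than $\delta$, by concatenating $\delta$-chains between net points.
\item Let $g\colon[0,1]\to\bbC^k$ be the piecewise linear path through $z_0,\dots,z_l$. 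Its image is within $\delta$ of $K$ and contains the net, so it is Hausdorff $\delta$-close to $K$.
\item Each segment lies in the convex set $D_m$, so $\|g_j\|\leq m$ automatically. Set $b_j=g_j$.
\end{itemize}
This approximation is the main step; the only delicate point is the Hausdorff-distance/uniform-continuity bookkeeping, which is routine.

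For the final assertion, \eqref{3.P.Continuum} already gives $\varphi^{C(X)}\leq\varphi^{C([0,1])}$ for universal $\varphi$. For the reverse inequality, assume $X$ is nondegenerate, the one-point case being handled separately. Pick distinct points $x,y\in X$ and a Urysohn function $h\colon X\to[0,1]$ with $h(x)=0$ and $h(y)=1$. By connectedness $h$ is onto $[0,1]$. Then $h_*$ embeds $C([0,1])$ unitally into $C(X)$, so universal sentences can only increase in passing to $C(X)$, which gives $\varphi^{C([0,1])}\leq\varphi^{C(X)}$.
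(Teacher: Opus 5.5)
Your proof is correct, but for \eqref{1.P.Continuum}$\Rightarrow$\eqref{3.P.Continuum} it takes a genuinely different route from the paper.

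The paper first uses the L\"owenheim--Skolem theorem to reduce to a metrizable $X$ inside the Hilbert cube $Q$. It then covers $X$ by finitely many closed $1/n$-balls of $Q$ centred at points of $X$. The union $Y_n$ is connected and satisfies $X\subseteq Y_n\subseteq (X)_{1/n}$. It is also a continuous image of $[0,1]$, because balls are retracts of $Q$ and $[0,1]$ maps onto $Q$. Lemma~\ref{L.GromovHausdorf-convergence} then embeds $C(X)$ into $\prod_\cU C(Y_n)$, and hence into $\prod_\cU C([0,1])$. Finally, Proposition~\ref{P.EmbeddingIntoUltrapower} transfers the resulting universal-theory inequality back to arbitrary $X$.

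You instead verify $\varphi^{C(X)}\leq\varphi^{C([0,1])}$ directly, one finite tuple at a time. The joint range of a $k$-tuple is a subcontinuum of a polydisc in $\bbC^k$. Quantifier-free values depend continuously on that joint range in the Hausdorff metric. A continuum is Hausdorff-approximated by a polygonal path through a $\delta$-chain, and that path stays in the convex polydisc.

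Your route is more elementary: it needs no metrizability reduction, no Hilbert cube, no Peano-continuum facts and no ultraproduct lemma, and it works for all compact $X$ at once. The paper's route, in exchange, gives an explicit embedding into an ultrapower over $\bbN$ for metrizable $X$. It also illustrates the Gromov--Hausdorff lemma, which is of independent interest.

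Your projection argument for \eqref{3.P.Continuum}$\Rightarrow$\eqref{1.P.Continuum} is more careful than the paper's one-line remark that continuous images of $[0,1]$ are connected. What maps onto $X$ is an ultracopower of $[0,1]$, not $[0,1]$ itself. So one really needs that $\prod_\cU C([0,1])$ has no nontrivial projections, which is exactly what you prove.

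One small correction concerns the final assertion. The one-point space is not something to be ``handled separately''; it is a genuine exception. For instance, the universal sentence
\[
\sup_{\|x\|\leq 1}\sup_{\|y\|\leq 1}\max\bigl(0,\min(\|x\|,\|y\|)^2-\|xy\|\bigr)
\]
has value $0$ in $\bbC$ and value $1$ in $C([0,1])$. This is why the paper works with a nontrivial continuum. The equivalence of \eqref{1.P.Continuum}--\eqref{3.P.Continuum} is unaffected.
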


Lemma~\ref{L.GromovHausdorf-convergence}, extracted from \cite{hart2005there}, is one of the key points of the proof of Proposition~\ref{P.Continuum}. 
Fix an ambient compact metric space $Z$ and fix a compatible metric on it; the Hilbert cube $[0,1]^\bbN$, usually denoted $Q$, will serve for our purpose. For $X\subseteq Z$ and $\varepsilon>0$ we consider the `$\varepsilon$-fattening of $X$' (the distance is computed with respect to the fixed compatible metric on $Z$): 
\[
(X)_\varepsilon=\{z\in Z: \dist(z,X)\leq \varepsilon\}. 
\]

\begin{lemma}\label{L.GromovHausdorf-convergence}
	Suppose that $X$ and $Y_n$, for $n\in \bbN$, are closed subspaces of a compact metric space $Z$, and that $X\subseteq Y_n\subseteq (X)_{1/n}$ for every $n$. Then $X$ is a continuous image of an ultracoproduct of $Y_n$, for $n\in \bbN$. 
\end{lemma}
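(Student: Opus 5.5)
Dually, it suffices to embed $C(X)$ into an ultraproduct $\prod_\cU C(Y_n)$ for a nonprincipal ultrafilter $\cU$ on $\bbN$; by Theorem~\ref{T.GN.Duality}, an injective unital *-homomorphism $C(X)\to\prod_\cU C(Y_n)=C(\sum_\cU Y_n)$ is the same as a continuous surjection $\sum_\cU Y_n\to X$. The embedding is built from the ambient space $Z$. By the Tietze extension theorem, every $f\in C(X)$ has a continuous extension $\tilde f\in C(Z)$, and we may take $\|\tilde f\|=\|f\|$ (extend real and imaginary parts, then compose with a retraction of $\bbC$ onto the closed disc of radius $\|f\|$). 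Define
\[
\Phi(f)=\bigl(\tilde f\rs Y_n\bigr)_n \bmod c_\cU .
\]

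First, $\Phi$ is well defined, i.e.\ independent of the extension $\tilde f$. If $g,h\in C(Z)$ agree on $X$, then $g-h$ is uniformly continuous on the compact metric space $Z$ and vanishes on $X$. Since $Y_n\subseteq (X)_{1/n}$, every point of $Y_n$ is within $1/n$ of a point of $X$, so $\|(g-h)\rs Y_n\|\to 0$. Hence the two sequences agree modulo $c_\cU$ (even modulo $c_{\Fin}$).

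The same uniform continuity argument gives multiplicativity, linearity and $*$-preservation. For example, $\widetilde{fg}$ and $\tilde f\tilde g$ are both extensions of $fg$, so by well-definedness $\Phi(fg)=\Phi(f)\Phi(g)$; similarly for sums, scalars, adjoints and the unit. Thus $\Phi$ is a unital *-homomorphism.

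For injectivity, note that $X\subseteq Y_n$, so
\[
\|\tilde f\rs Y_n\|\geq \|f\|\quad\text{for all } n,
\]
and therefore $\|\Phi(f)\|=\lim_{n\to\cU}\|\tilde f\rs Y_n\|\geq\|f\|>0$ for $f\neq 0$. In fact $\Phi$ is isometric, since we chose $\|\tilde f\|=\|f\|$. Applying Theorem~\ref{T.GN.Duality} then yields a continuous surjection from $\sum_\cU Y_n$ onto $X$.

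The main obstacle is the well-definedness step, which is where both hypotheses enter. The inclusion $Y_n\subseteq (X)_{1/n}$ combined with uniform continuity makes $\Phi$ independent of choices, and the inclusion $X\subseteq Y_n$ gives injectivity. The rest is bookkeeping. The argument works for any nonprincipal $\cU$, and even shows that $X$ is a continuous image of the reduced coproduct $\sum_{\Fin}Y_n$.
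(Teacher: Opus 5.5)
Your proof is correct and follows the paper's argument: Tietze extensions into $Z$ restricted to the $Y_n$, with uniform continuity and $X\subseteq Y_n\subseteq (X)_{1/n}$ giving an isometric unital *-homomorphism $C(X)\to\prod_\cU C(Y_n)$, followed by Gelfand--Naimark duality. The only difference is organizational: the paper defines the map on a countable dense $\bbQ+i\bbQ$-subalgebra and extends it by continuity, whereas your observation that the map is independent of the chosen extension modulo $c_{\Fin}$ lets you define it on all of $C(X)$ at once, read off the algebraic identities directly, and see the $\sum_{\Fin} Y_n$ strengthening immediately.
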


\begin{proof}Let $\cU$ be a nonprincipal ultrafilter on $\bbN$. 
	By Theorem~\ref{T.GN.Duality}, it will suffice to prove that $C(X)$ embeds into $\prod_\cU C(Y_n)$. 
	Let $a_n$, for $n\in \bbN$, be an enumeration of a dense $\bbQ+i\bbQ$-subalgebra $A$ of $C(X)$. By the Tietze Extension Theorem, for each $n$ fix a continuous extension $\tilde a_n\in C(Z)$ of $a_n$ such that $\|a_n\|=\|\tilde a_n\|$. 
	Let $\pi_\cU\colon \prod_n C(Y_n)$\footnote{Recall that this is the $\ell_\infty$-product.}$\to \prod_\cU C(Y_n)$ be the quotient map. 
	Define $\Phi\colon A\to \prod_n C(Y_n)$ by 
	\[
	\Phi(a_n)=\langle \tilde a_n\rs Y_n: n\in \bbN\rangle. 
	\]
	We claim that $\Psi=\pi_\cU\circ\Phi$ is an isometric *-homomorphism of $A$ into $\prod_\cU C(Y_n)$.\footnote{A word of caution: $A$ is not a \cstar-algebra, hence *-homomorphisms from $A$ are not automatically continuous.}

For every $n$, the function $\tilde a_n$ is uniformly continuous, and therefore 
\[
\lim_n \sup_{y\in Y_n}|\tilde a_n(y)|=\sup_{x\in X} |a_n(x)|.
\] 
Thus $\lim_n \|\tilde a_n\rs Y_n\|=\|a_n\|$, and $\Psi$ is isometric. 

To prove that it is a *-homomorphism, fix $m$ and $n$ and let $k$ be such that $a_k=a_m+a_n$. Then $b=\tilde a_k-\tilde a_m-\tilde a_n$ vanishes on $X$, and again by uniform continuity we have $\limsup_n \|b\rs Y_n\|=0$. Since this holds for all $m$ and $n$, $\Psi$ is additive. 

Analogous arguments show that for all $\lambda\in \bbQ+i\bbQ$ and all $m,n$ in $\bbN$, $\Psi(\lambda a_n)=\lambda \Psi(a_n)$, $\Psi(a_m a_n)=\Psi(a_m)\Psi(a_n)$, and $\Psi(a_m^*)=\Psi(a_n)^*$. Therefore $\Psi$ is an isometric *-homomorphism as claimed. 

The continuous extension of $\Psi$ to $C(X)$ is the required embedding into $\prod_\cU C(Y_n)$. 
\end{proof}

\begin{proof}[Proof of Proposition~\ref{P.Continuum}] Fix a nontrivial continuum $X$. 
		We will first prove that $X$ has the same universal cotheory as $[0,1]$. Clearly $X$ maps onto~$[0,1]$, and it remains to prove that an ultracopower of $[0,1]$ maps onto $X$. 
By the L\"owenheim--Skolem theorem (\cite[Theorem~7.1.3]{Fa:STCstar}) and Theorem~\ref{T.GN.Duality}, we may assume that~$X$ is metrizable and identify it with a closed subspace of the Hilbert cube~$Q$. 

Fix $n$. By compactness, $X$ can be covered by finitely many closed $1/n$-balls in $Q$ with centres in $X$, $X\subseteq \bigcup_{j<m(n)} B_{n,j}$.  Every closed ball is a retract of $Q$, and since $[0,1]$ maps onto $Q$, it is a continuous image of $[0,1]$. Since the centres of $B_{n,j}$ belong to $X$, the space $Y_n=\bigcup_{j<m(n)} B_{n,j}$ is connected. By induction on $m(n)$, one proves that $Y_n$ is a continuous image of $[0,1]$. In particular, $C(Y_n)$ embeds into $C([0,1])$. 

Since $X\subseteq Y_n\subseteq (X)_{1/n}$, Lemma~\ref{L.GromovHausdorf-convergence} implies that $C(X)$ embeds into an ultraproduct of $C(Y_n)$, and therefore into an ultrapower of $C([0,1])$.

This proves that an arbitrary continuum $X$ has the same universal cotheory as $[0,1]$. By Proposition~\ref{P.EmbeddingIntoUltrapower}, \eqref{1.P.Continuum} implies \eqref{2.P.Continuum} and \eqref{3.P.Continuum}. The latter two conditions are equivalent by Theorem~\ref{T.GN.Duality}. Finally, since every continuous image of $[0,1]$ is connected, \eqref{2.P.Continuum} implies \eqref{1.P.Continuum}. 
\end{proof}

\section{\v Cech--Stone remainders}

The following is a very special case of a general concept dear to this author's heart (see \cite{farah2025corona} for the complete story). 

\begin{definition}\label{D.Trivial}
	If $X$ is locally compact, then an autohomeomorphism $g$ of $\beta X\setminus X$ is \emph{trivial} if there is a homeomorphism $f$ between cocompact subsets of $X$ such that the restriction of $\beta f$\footnote{Here $\beta f$ is the unique extension of $f$ to a continuous function from $\beta X$ into $\beta X$.} to $\beta X\setminus X$ 
is a homeomorphism with $\beta X\setminus X$.\footnote{Such an extension exists because $X$ is completely regular. Note that it is not automatic that the $\beta f$-image of $\beta X\setminus X$ is included in $\beta X\setminus X$.}
One analogously defines trivial homeomorphisms between different \v Cech--Stone remainders. 
\end{definition}

\subsection{Continuum Hypothesis}
By \cite{Ru}, CH implies that $\beta\bbN\setminus \bbN$ has $2^{\aleph_1}$ nontrivial autohomeomorphisms, and by \cite{Sh:Proper} there is a forcing extension in which all autohomeomorphisms of $\beta\bbN\setminus \bbN$ are trivial. (A friendly sketch of this formidable proof can be found in \cite[\S 7.1]{farah2025corona}.) Both of these results had been proved by using Stone duality and proving the corresponding assertion about automorphisms of the Boolean algebra $\cPN/\Fin$.

Rudin's construction of $2^{\aleph_1}$ autohomeomorphisms of $\beta\bbN\setminus \bbN$ using CH (\cite{Ru}) nowadays easy to describe.  First, $\cPN/\Fin$ is  countably saturated, and therefore saturated, under CH. Every saturated model of cardinality~$\aleph_1$ has $2^{\aleph_1}$ automorphisms. Apply Stone duality. 
(See \cite{dow2002applications} for two other proofs.)
The fact that the theory of atomless Boolean algebra admits elimination of quantifiers means that every homomorphism between subalgebras of $\cPN/\Fin$ is automatically an elementary map, making the construction even more fool-proof. 
(Note that Rudin's original proof is different, and that it shows that every two P-points of character $\aleph_1$ are conjugate by an autohomeomorphism of $\beta\bbN\setminus \bbN$.)

The model-theoretic approach also gives an easy proof of Parovi\v cenko's theorem which implies that all \v Cech--Stone remainders of locally compact, zero-dimensional, separable metric spaces $X$ look the same (and are even homeomorphic under CH)  (\cite{dow2002applications}). 
 In \cite{eagle2015saturation} it was proved that for such $X$ the \cstar-algebra $C(\beta X\setminus X)$ is countably saturated and admits elimination of quantifiers. The latter is analogous to, but not a consequence of, the well-known  fact that atomless Boolean algebras admit elimination of quantifiers; see \cite[Theorem~5.17]{eagle2015saturation}.

The situation is considerably more complicated if one drops zero-di\-men\-sio\-na\-li\-ty.

One could expect that analogous methods apply to \v Cech--Stone remainders of other locally compact, non-compact Polish spaces. This is true, modulo a few  hurdles.  
By \cite[Corollary 3.4]{eagle2015pseudoarc} together with the main result of \cite{eagle2015quantifier}, the only unital, commutative \cstar-algebras that admit quantifier elimination are $\bbC$, $\bbC^2$, and $C(2^{\bbN})$.\footnote{This is assuming that the language has a constant for the unit.} The failure of quantifier elimination does not present a real problem, but there is an issue with saturation.

In the following, $\bigsqcup_n X_n$ denotes the discrete sum of spaces $X_n$, for $n\in \bbN$. 

\begin{proposition}
	Suppose that $X_n$, for $n\in \bbN$, are compact Hausdorff spaces and $X=\bigsqcup_n X_n$. Then $C(\beta X\setminus X)$ is countably saturated. In particular, CH implies that it has $2^{\aleph_1}$ automorphisms. 
\end{proposition}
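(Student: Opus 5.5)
The plan is to identify $C(\beta X\setminus X)$ with the reduced product $\prod_{\Fin} C(X_n)$ and then quote Theorem~\ref{T.Ultraproducs.Saturated}. By Lemma~\ref{L.betaX}, the reduced coproduct $\sum_{\Fin} X_n$ is homeomorphic to $\beta X\setminus X$. Concretely, $C(\beta X)\cong \cM(C_0(X))\cong C_b(X)\cong\prod_n C(X_n)$, the latter being the $\ell_\infty$-product (Lemma~\ref{L.Multiplier}). Under this identification $C_0(X)$ corresponds to the ideal $c_{\Fin}$ of sequences with $\|f_n\|\to 0$. Therefore $C(\beta X\setminus X)\cong \cM(C_0(X))/C_0(X)\cong \prod_{\Fin}C(X_n)$. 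Theorem~\ref{T.Ultraproducs.Saturated} then gives countable saturation of $\prod_{\Fin}C(X_n)$ directly, with no metrizability assumption on the $X_n$.

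The one step needing care is the identification of $C_0(X)$ with $c_{\Fin}$. A bounded function $f=(f_n)$ on $X$ vanishes at infinity exactly when, for every $\varepsilon>0$, the set $\{|f|\geq\varepsilon\}$ is compact. Since each $X_n$ is clopen in $X$, a compact subset of $X$ meets only finitely many $X_n$. So $f$ vanishes at infinity iff $\{n:\|f_n\|\geq\varepsilon\}$ is finite for every $\varepsilon>0$, i.e.\ iff $\lim_n\|f_n\|=0$.

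For the ``in particular'' part, the plan is to invoke the fact used in the preceding Corollary: a saturated structure of density character $\aleph_1$ has $2^{\aleph_1}$ automorphisms (\cite[Theorem~16.6.3]{Fa:STCstar}). Under CH this needs $C(\beta X\setminus X)$ to have density character $\aleph_1$, so that countable saturation amounts to full saturation.

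This is where I expect the only real obstacle. If the $X_n$ have large weight, the density character of the reduced product exceeds $\aleph_1$, and then countable saturation alone does not yield saturation. The route I would take is to read the statement, as in the preceding Corollary, with each $X_n$ metrizable, which the ambient discussion of Polish spaces suggests. Then $C(X_n)$ is separable by Lemma~\ref{L.MetrizableSeparable}, so $\prod_n C(X_n)$ has density character at most $2^{\aleph_0}=\aleph_1$, and the quotient has density character exactly $\aleph_1$ as soon as infinitely many $X_n$ are nonempty; the trivial cases are disregarded. Theorem~\ref{T.Ultraproducs.Saturated} then gives saturation under CH, and Keisler's theorem produces $2^{\aleph_1}$ automorphisms. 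Dualizing via Theorem~\ref{T.GN.Duality}, these correspond to $2^{\aleph_1}$ autohomeomorphisms of $\beta X\setminus X$.
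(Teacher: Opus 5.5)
Your proposal is correct and takes essentially the paper's route: identify $C(\beta X\setminus X)$ with $\prod_n C(X_n)/\bigoplus_n C(X_n)=\prod_{\Fin}C(X_n)$, then quote countable saturation of reduced products (the paper cites Farah--Shelah and \S16.5 of Farah's book, the same source behind Theorem~\ref{T.Ultraproducs.Saturated}). Your caveat about the CH clause is justified, since the paper's proof does not address it and the clause does need density character $\aleph_1$ (e.g.\ metrizable $X_n$, infinitely many of them nonempty), as in the preceding Corollary.
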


\begin{proof}
	We have that $C_0(X)=\bigoplus_n C(X_n)$, and that $C_b(X)$ can be identified with 
	$\prod_n C(X_n)$.\footnote{Remember, this is not the Cartesian product.}
The quotient $\prod_n C(X_n)/\bigoplus_n C(X_n)$ is, by \cite[Theorem~2.1]{FaSh:Rigidity} (see however a simpler proof in \cite[\S 16.5]{Fa:STCstar}), countably saturated. 
\end{proof}

The following result of Saeed Ghasemi (\cite{ghasemi2016reduced})  will be used in the proof of Corollary~\ref{C.Independent} below. 

\begin{theorem}[Ghasemi's trick]\label{T.Ghasemi}
		Suppose that $X_n$, for $n\in \bbN$, is a sequence of compact metric spaces. Then there is an infinite $I\subseteq \bbN$ such that for every infinite $J\subseteq I$, CH implies that $X_I=\bigoplus_{n\in I}X_n$ and $X_J=\bigoplus_{n\in J} X_n$ have homeomorphic \v Cech--Stone remainders.  
\end{theorem}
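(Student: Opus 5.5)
We will obtain $I$ by a diagonal argument on the cotheories of the $X_n$, and then transfer the resulting equality of limit cotheories to a homeomorphism of remainders using Feferman--Vaught together with saturation under CH, exactly as in item~\eqref{I3} of the Corollary following Theorem~\ref{T.Ultraproducs.Saturated}.

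First, the diagonal step. The language of unital \cstar-algebras has only countably many sentences up to the relevant approximation: quantifier-free formulas use continuous $f\colon\bbR^n\to\bbR$, but these can be approximated uniformly on bounded sets by polynomials with rational coefficients, and $*$-polynomials can be taken with coefficients in $\bbQ+i\bbQ$. So we fix a countable set $\{\varphi_k:k\in\bbN\}$ of sentences that is dense in the sense that every sentence is uniformly approximated, on all \cstar-algebras at once, by some $\varphi_k$. For each $k$, the values $\varphi_k^{C(X_n)}$ lie in a bounded interval of $\bbR$. We choose infinite sets $\bbN\supseteq I_0\supseteq I_1\supseteq\dots$ such that $\lim_{n\in I_k}\varphi_k^{C(X_n)}$ exists; this is possible by Bolzano--Weierstrass. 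We then take $I$ to be a diagonal pseudo-intersection, whose $k$-th element is chosen from $I_k$. For every $k$, $\lim_{n\in I}\varphi_k^{C(X_n)}$ exists, and by density the limit exists for every sentence $\varphi$. So $\lim_{n\in I}\coTh(X_n)$ is defined. For every infinite $J\subseteq I$, $\lim_{n\in J}\coTh(X_n)$ exists and coincides with it, since a subsequence of a convergent sequence converges to the same limit.

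Second, the transfer. We re-index $I$ and $J$ increasingly as $\bbN$. By Lemma~\ref{L.betaX}, the remainders of $X_I$ and $X_J$ are $\sum_{\Fin}X_n$ over $n\in I$ and over $n\in J$ respectively. Their $C(\cdot)$ are the reduced products $\prod_{\Fin}C(X_n)$. By the Feferman--Vaught corollary (\cite{ghasemi2016reduced}, \cite[Theorem~16.3.1]{Fa:STCstar}), the theory of $\prod_{\Fin}C(X_n)$ is determined by $\lim_{n}\coTh(X_n)$ whenever that limit exists. We plan to verify this as follows. Feferman--Vaught reduces the value of $\varphi$ in the reduced product to values of finitely many formulas in the factors, combined through the Fréchet filter. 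Since the $\Fin$-quotient only sees limits along cofinite sets, only $\limsup$/$\liminf$ data enter, and these equal limits here. Hence the two reduced products are elementarily equivalent. Each $X_n$ is metrizable, so each $C(X_n)$ is separable by Lemma~\ref{L.MetrizableSeparable}. The reduced products therefore have density character $\mathfrak c=\aleph_1$ under CH. By Theorem~\ref{T.Ultraproducs.Saturated} they are saturated. Keisler's uniqueness of saturated models (\cite[Corollary 16.6.5]{Fa:STCstar}) then yields an isomorphism, and Theorem~\ref{T.GN.Duality} converts it into a homeomorphism of the remainders.

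The main obstacle is the Feferman--Vaught step. We must make sure the version available gives, for the Fréchet filter, that the reduced product's theory depends only on the limit cotheory. The diagonalization, by contrast, is routine once the separability of the space of sentences is noted. If the Feferman--Vaught statement only gives dependence on the values $\varphi^{C(X_n)}$ modulo $\Fin$ for a countable family of formulas, the argument still goes through: we diagonalize against that countable family in the first step.
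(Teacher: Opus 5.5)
Your proposal is correct and follows essentially the same route as the paper. You choose $I$ so that $\lim_{n\in I}\varphi^{C(X_n)}$ exists for every sentence $\varphi$, spelling out the diagonalization over a countable dense set of sentences that the paper compresses into one line. You then invoke the Feferman--Vaught corollary to get that the two reduced products are elementarily equivalent, and conclude by saturation under CH, Keisler's uniqueness of saturated models, and Gelfand--Naimark duality.
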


The choice of $I$ in this theorem does not depend on CH.

\begin{proof} Since the language of \cstar-algebras is countable, the space of all sentences is separable, and we can choose an infinite $I\subseteq \bbN$ such that for every sentence $\varphi$ the limit $\lim_{n\in I, n\to \infty}\varphi^{C(X_n)}$ exists. This limit clearly does not change when passing to a further infinite $J\subseteq I$. 
	By a corollary to Ghasemi's Feferman--Vaught theorem (\cite{ghasemi2016reduced}, \cite[Theorem~16.6.3]{Fa:STCstar}), the \cstar-algebras 
	\begin{align*}
	C(\beta X_I\setminus X_I)&=\textstyle\prod_{n\in I} C(X_n)/\sum_{n\in I}C(X_n) \text{ and }\\ 
	C(\beta X_J\setminus X_J)&=\textstyle\prod_{n\in J} C(X_n)/\sum_{n\in J}C(X_n)
	\end{align*}
	 are elementarily equivalent. Since they are both countably saturated, CH implies that they are isomorphic, and by the Gelfand--Naimark duality $X_I$ and~$X_J$ have homeomorphic remainders.  
\end{proof}

The \v Cech--Stone remainder of the half-line $\bbH=[0,\infty)$ is a well-studied continuum (\cite{Hart:Cech}). 
The Continuum Hypothesis implies that $\beta \bbH\setminus \bbH$ has $2^{\aleph_1}$ nontrivial autohomeomorphisms (\cite[\S 9]{Hart:Cech}, \cite{yu1991automorphism}).

\begin{theorem}\label{T.Saturated.betaH-H} The \cstar-algebra $C(\beta \bbH\setminus \bbH)$ is countably saturated. In particular, CH implies that it has $2^{\aleph_1}$ nontrivial autohomeomorphisms. 
	\end{theorem}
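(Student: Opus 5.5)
The plan is to reduce to the preceding Proposition, which says that for a discrete sum $X=\bigsqcup_n X_n$ of compact spaces, $C(\beta X\setminus X)$ is countably saturated. Write $\bbH=\bigcup_n [n,n+1]$. The obvious quotient map $q\colon \bigsqcup_n [n,n+1]\to \bbH$ is perfect and surjective, so $\beta q$ restricts to a continuous surjection from $\beta X\setminus X$ onto $\beta\bbH\setminus\bbH$, where $X=\bigsqcup_n I_n$ with $I_n=[n,n+1]$. Dually, by Theorem~\ref{T.GN.Duality}, $C(\beta\bbH\setminus\bbH)$ embeds into $\prod_n C(I_n)/\bigoplus_n C(I_n)$. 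The image is the set of classes of sequences $(f_n)$ with $\lim_n |f_n(n+1)-f_{n+1}(n+1)|=0$, which is well defined modulo $\bigoplus_n C(I_n)$. Indeed, such a sequence can be corrected on small neighbourhoods of the gluing points, by a perturbation of norm tending to zero, to a genuine bounded continuous function on $\bbH$.

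So $C(\beta\bbH\setminus\bbH)$ is the subalgebra $B$ of the countably saturated algebra $C=\prod_n C(I_n)/\bigoplus_n C(I_n)$ cut out by the condition that the endpoint mismatch vanishes asymptotically. To realize a countable type $\bt(\bar x)$ over $B$ that is approximately realized in $B$, I would follow the proof of countable saturation of reduced products, namely the diagonal argument used in \cite[\S 16.5]{Fa:STCstar}, rather than invoke the Proposition as a black box. The steps are these.

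First, enumerate the conditions of $\bt$ and, for each $k$, choose a tuple $\bar b^k$ in $B$ that $1/k$-satisfies the first $k$ conditions. Lift each $\bar b^k$ to a representing sequence of continuous functions on $\bbH$; these are honest elements of $C_b(\bbH)$, so they have no endpoint mismatch. Second, choose an increasing sequence $m_k$ such that on $[m_k,\infty)$ the values of the first $k$ formulas, computed in $C_b(\bbH)$ restricted to $[m,\infty)$, are within $1/k$ of their values in the corona. This uses the fact that norms of polynomials in the corona are $\limsup$'s over tails. Third, define the realizing tuple by gluing: on $[m_k,m_{k+1}]$ use $\bar b^k$, and interpolate linearly on $[m_{k+1}-1,m_{k+1}]$ between $\bar b^k$ and $\bar b^{k+1}$.

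The main obstacle is the third step. Quantified formulas evaluated on a glued function must behave like the formulas on the individual pieces, and the interpolation regions could spoil this because they are not pieces of either $\bar b^k$ or $\bar b^{k+1}$. The fix I would try is to avoid interpolation and instead use a partition of unity and convex combinations $\sum_k \phi_k \bar b^k$ with $\phi_k$ supported on overlapping intervals. We also need consecutive tuples to be close on the overlaps. That closeness can be arranged by choosing $\bar b^{k+1}$ to also approximately realize the type together with proximity to $\bar b^k$ on the overlap, which is possible because approximate realization is preserved under the local choice. If this becomes too messy, I would fall back on the Feferman--Vaught style argument: the theory of $C(\beta\bbH\setminus\bbH)$ localizes to tails $[m,\infty)$, mimicking the proof of \cite[Theorem~2.1]{FaSh:Rigidity} for the $\sigma$-unital algebra $C_0(\bbH)$. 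In fact that theorem applies to coronas of $\sigma$-unital algebras in general, which would give countable saturation directly.

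The final clause is then formal. Under CH, $C(\beta\bbH\setminus\bbH)$ has density character $\aleph_1$ and is therefore saturated, so by Keisler's theorem it has $2^{\aleph_1}$ automorphisms. Only $\mathfrak c=\aleph_1$ of these are trivial, since trivial ones are coded by homeomorphisms between cocompact subsets of $\bbH$. Gelfand--Naimark duality then transfers the count to autohomeomorphisms of $\beta\bbH\setminus\bbH$.
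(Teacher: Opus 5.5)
Your embedding of $C(\beta\bbH\setminus\bbH)$ into $\prod_n C([n,n+1])/\bigoplus_n C([n,n+1])$ is correct. You also rightly observe that it proves nothing by itself, since countable saturation does not pass to subalgebras: quantifiers in $C(\beta\bbH\setminus\bbH)$ range only over the subalgebra. The diagonal argument meant to replace the Proposition, however, is not carried out, and it is incomplete exactly where the content of the theorem lies.

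The fact you invoke in Step~2, that norms in the corona are $\limsup$'s over tails, only concerns quantifier-free formulas. For formulas with quantifiers you give no reason why their value in the corona should be the limit of their values in $C_b([m,\infty))$. Applied to a formula without free variables, your claim would already say that $C(\beta\bbH)$ and $C(\beta\bbH\setminus\bbH)$ are elementarily equivalent, because $[m,\infty)\cong\bbH$. Such a localization amounts to a Feferman--Vaught-type theorem for a connected space, and the paper notes this is currently unavailable.

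In Step~3, you assert but do not prove that some $\bar b^{k+1}$ still approximately realizes the type and is also close to $\bar b^k$ on an overlap. The overlaps cannot be ignored: there are infinitely many of them, and the closure of their union meets $\beta\bbH\setminus\bbH$. A useful test: nothing in Steps~2--3 uses that consecutive pieces meet in a single point. The same sketch with annuli in place of intervals would give countable saturation of $C(\beta\bbR^2\setminus\bbR^2)$, which is false by \cite[Corollary~2]{farah2023obstructions}, quoted in the paper. So any correct argument must use the smallness of the boundaries.

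Your fallback rests on a false claim. \cite[Theorem~2.1]{FaSh:Rigidity} is the reduced-product (discrete sum) result behind the preceding Proposition. It is not a theorem about coronas of arbitrary $\sigma$-unital algebras. Such coronas are in general only countably degree-1 saturated, and $C_0(\bbR^2)$ is again a counterexample to full countable saturation.

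What you need is \cite[Theorem~2.5]{FaSh:Rigidity}: if $X=\bigcup_n X_n$ is an increasing union of compact sets with $\sup_n|\partial X_n|<\infty$, then $C(\beta X\setminus X)$ is countably saturated. The paper's proof applies it with $X_n=[0,n]$, where $\partial X_n=\{n\}$. Your CH paragraph is fine as written. It even supplies the count of trivial automorphisms, which the paper leaves implicit.
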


\begin{proof}
Countable saturation follows by a special case of \cite[Theorem~2.5]{FaSh:Rigidity}: take $X_n=[0,n]$; then $\bbH=\bigcup_n X_n$ and $\partial X_n$ is a singleton for all $n$. As before, CH implies that $C(\beta \bbH\setminus \bbH)$, as a saturated model, has $2^{\aleph_1}$ automorphisms and the conclusion follows by Theorem~\ref{T.GN.Duality}. 
\end{proof}

Countable saturation can get us slightly further. 
By \cite[Theorem~2.5]{FaSh:Rigidity}, if~$X$ is a locally compact, non-compact Polish space that can be presented as an increasing union of compact subspaces $X_n$ such that (denoting the topological boundary of $X_n$ by $\partial X_n$) $\sup_n |\partial X_n|<\infty$, then $C(\beta X\setminus X)$ is countably saturated. \cite[Theorem~2.2]{FaSh:Rigidity} suggests that this condition on $X$ may be necessary for countable saturation of $C(\beta X\setminus X)$.

Is there a pair of connected (locally compact, non-compact, Polish) spaces $X$ and $Y$ such that the assertion that $\beta X\setminus X$ and $\beta Y\setminus Y$ are homeomorphic is independent from ZFC? In order to have these remainders homeomorphic for nontrivial reason, one would have to prove that the associated \cstar-algebras are elementarily equivalent, and at present we do not have an efficient way of computing the theory of a connected space. One route to this would be via an extension of the Feferman--Vaught theorem;  such an extension would be of great interest in its own right. 

If $X$ and $Y$ satisfy the assumptions of \cite[Theorem~2.5]{FaSh:Rigidity}, then 
a simpler route could use a combination of Ghasemi's trick with the methods of \cite[\S 3]{dow1993cech}, where the \v Cech--Stone remainder of a continuum $X$ is considered as a continuous image of the \v Cech--Stone remainder of the space $[0,1]\times \bbN$ (known as $\mathbb M$).

\subsubsection{Nontrivial autohomeomorphisms without countable saturation.}
If $X$ can be presented as an increasing union of compact subspaces, then $C(\beta X\setminus X)$ satisfies a weak form of countable saturation called \emph{countable degree-1 saturation} (see \cite[\S 15]{Fa:STCstar}). While fairly useful for analyzing properties of \cstar-algebras, this property does not provide a sufficient amount of interesting information about \v Cech--Stone remainders to justify introducing it here. 

The following inconvenient fact is \cite[Corollary~2]{farah2023obstructions} (see also Theorem 1 of the same paper for more bad news). 
 
\begin{theorem}
If $\bbR^2$ embeds as a closed subspace into $X$, then $C(\beta X\setminus X)$ is not countably saturated. \qed
\end{theorem}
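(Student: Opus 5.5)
The plan is to argue by contradiction. I would write down an explicit countable type over $C(\beta X\setminus X)$ that is approximately realized but not realized. The obstruction should be topological: a winding-number (degree) argument on large circles in the closed copy of $\bbR^2$. The rest of $X$ should be irrelevant, and I expect this to follow from normality and the Tietze extension theorem.

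\textbf{Step 1: reduction to $\bbR^2$.} Fix a closed copy $P$ of $\bbR^2$ in $X$. Since $X$ is locally compact and Polish, hence normal, $P$ is $\cst$-embedded. So the closure of $P$ in $\beta X$ is $\beta P$, and $\beta P\setminus P$ is a closed subset of $\beta X\setminus X$. By Theorem~\ref{T.GN.Duality} this gives a surjection $C(\beta X\setminus X)\to C(\beta P\setminus P)$, and via Tietze every parameter in the remainder of $P$ lifts. Saturation does not pass to quotients, so I will not reduce formally. Instead I will choose all parameters of the type in $C(\beta X\setminus X)$ as images of functions in $C_b(X)$ that vanish outside a closed neighbourhood of $P$. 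All conditions then talk only about the behaviour of a realizing element $x$ on that neighbourhood. Concretely, fix a cutoff $e\in C_b(X)$ with $0\le e\le 1$ and $e=1$ on $P$, and a $u\in C_b(X)$ with $|u|\le 1$ that equals $z/|z|$ on $P\cap\{|z|\ge 1\}$.

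\textbf{Step 2: the type.} Choose radii $r_k\to\infty$ growing fast. The variable $x$ will be required to satisfy the following conditions:
\begin{itemize}
\item $\|(x^*x-1)e\|=0$, so $x$ is unitary on the remainder of $P$;
\item $\|x^2e-ue\|\le 1/m$ for every $m\ge 1$.
\end{itemize}
Together these are a countable set of conditions, though they combine into the single condition $\|x^2e-ue\|=0$. A realization would be an element of $C_b(X)$ that, on all circles $|z|=r$ with $r$ beyond some bound, is a unitary whose square is uniformly close to $z/|z|$. Such a function has a well-defined degree on each such circle, and twice that degree would equal $1$, which is a contradiction.

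\textbf{Step 3: approximate realization, the main obstacle.} The type as just stated is not even approximately realizable; the same degree argument kills it. So I must weaken it so that each finite fragment is satisfiable up to $\varepsilon$ while the whole type is not. The first thing I would try exploits the fact that, on a circle of radius $r$, a function can complete a full turn while moving only $O(1/r)$ per unit length. I would replace the exact relation by a sequence of parameters: $u_k$, agreeing with $z/|z|$ on the annuli $r_{2j}\le|z|\le r_{2j+1}$ for $j\ge k$, and $v_k$, a ``one-winding-defect'' perturbation supported in the gaps. Each finite fragment then allows the defect to be placed in an annulus not yet constrained, while the full type forces the defect to disappear on a tail.

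An alternative I would keep in reserve is to encode the dichotomy by conditions of the form
\[
\|(x-u)g_m\|\le 2-1/m ,
\]
where $g_m$ is supported on circles. Here $|x-u|<2$ everywhere on a circle prevents $x$ from being antipodal to $u$, and hence forces equal degrees, while the slack of $1/m$ permits a slow twist.

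The verification that each finite fragment is $\varepsilon$-realized is a direct construction in $C_b(X)$: interpolate the required unitaries across unconstrained annuli and extend by Tietze off $P$. The non-realization is the degree computation on a tail of circles. Balancing these two requirements, so that the finite fragments are satisfiable while the whole type is not, is where I expect the real difficulty, and where the planarity of $\bbR^2$, through nontrivial $\pi_1$ of large circles, must enter.
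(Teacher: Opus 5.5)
\textbf{There is a genuine gap.} The whole content of the theorem is an explicit countable type over $C(\beta X\setminus X)$ that is approximately realized but not realized. Your proposal never produces one, and neither sketch in Step~3 can be made to work. Step~1 and your choice of invariant are fine: the winding number on large circles of the closed plane $P$ is the right tool. The trouble is structural. A unitary of $C(\beta\bbR^2\setminus\bbR^2)$ has a \emph{single} degree $d$: a lift winds $d$ times around every circle $|z|=r$ with $r$ large, and $d$ does not depend on $r$.

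A condition only sees behaviour near infinity. So ``the annuli with $j\ge k$'' give the same constraint for every $k$, and there is no ``annulus not yet constrained'' in which to hide a defect. Any condition that pins $x^2$ to $z/|z|$ on full circles tending to infinity forces $2d=1$ on its own, which is why your Step~2 type already fails at a finite stage. The conditions $\|(x-u)g_m\|\le 2-1/m$ are all satisfied by $x=u$, whatever the $g_m$ are. Moreover, $|x-u|<2$ on a full circle forces $\deg x=\deg u$ however small the slack, so no ``slow twist'' changes the degree. In short, any type built around one fixed degree equation such as $2\deg x=1$ is either inconsistent at a finite stage or consistent outright.

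\textbf{The missing idea.} Different finite fragments must be realized by elements of \emph{different} degrees, with no integer compatible with all of them. In other words, you need the fact that $\mathbb{Z}$ is not closed in the $2$-adic integers $\mathbb{Z}_2$ (equivalently, not algebraically compact). Concretely, let $a_N\in\{0,\dots,2^N-1\}$ be the truncations of some $\alpha\in\mathbb{Z}_2\setminus\mathbb{Z}$, so that $a_{N+1}\equiv a_N \pmod{2^N}$. Let $u$ be the class of a function equal to $z/|z|$ near infinity, and consider the type
\[
\|x^*x-1\|=0,\qquad \inf_{\|y\|\leq 1}\bigl\|y^{2^N}-x(u^*)^{a_N}\bigr\|=0\quad (N\in\bbN).
\]
\emph{Finite fragments.} The fragment with $N\le M$ is realized exactly by $x=u^{a_M}$ and $y=u^{(a_M-a_N)/2^N}$.

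\emph{Non-realization.} Suppose $x$ realized the whole type, with degree $d$. For each $N$, pick $y$ with $\|y^{2^N}-x(u^*)^{a_N}\|<1/2$. The dog-on-a-leash (Rouch\'e) argument on large circles then gives $2^N\mid d-a_N$. Since this holds for every $N$, we get $d=\alpha$, a contradiction.

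\emph{General $X$.} You cannot take $u$ unitary on all of $X$: in $C(\beta\bbR^3\setminus\bbR^3)$ every unitary has degree $0$ on large circles of a plane. So your cutoff must be coordinated with $u$:
\begin{itemize}
\item take a Tietze extension of $z/|z|$ from $P$ to $X$;
\item choose $e$ supported where this extension has modulus at least $1/2$, with $e=1$ on $P$ near infinity;
\item replace $u$ by $u/|u|$ on that set;
\item multiply every condition by $e$.
\end{itemize}
The finite fragments are then still realized exactly, and restriction to $\beta P\setminus P$ carries any realization to the plane, where the degree argument above applies.
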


 Nevertheless, \cstar-algebras provide a good vantage point for analyzing autohomeomorphisms of \v Cech--Stone remainders of $\bbR^n$ for $n\geq 2$ and other locally compact, non-compact manifolds. 
The following is \cite[Theorem~1]{vignati2017nontrivial}. 

\begin{theorem}
If $X$ is a locally compact, metrizable, non-compact manifold, then CH implies that $\beta X\setminus X$ has $2^{\aleph_1}$ autohomeomorphisms. \qed 
\end{theorem}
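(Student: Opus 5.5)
The plan is to localize the problem to a sequence of pairwise disjoint closed balls going off to infinity. Inside such balls the relevant reduced product is countably saturated, even though $C(\beta X\setminus X)$ itself need not be (e.g.\ when $\bbR^2$ embeds into $X$). We then glue automorphisms built there with the identity elsewhere.

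\emph{Setup.} Since $X$ is a non-compact manifold, fix a sequence of pairwise disjoint closed coordinate balls $D_n\subseteq X$, each homeomorphic to the closed unit ball of $\bbR^d$, forming a discrete family that leaves every compact set. Let $S_n=\partial D_n$, let $D=\bigcup_n D_n$, which is closed in $X$, and let $Y=X\setminus\bigcup_n \mathrm{int}(D_n)$. Then $X=Y\cup D$ with $Y\cap D=\bigcup_n S_n$. Passing to \v Cech--Stone remainders, and dually via Theorem~\ref{T.GN.Duality}, $C(\beta X\setminus X)$ is the pullback of
\[
C(\beta Y\setminus Y)\quad\text{and}\quad \textstyle A=\prod_n C(D_n)/\bigoplus_n C(D_n)
\]
over $B=\prod_n C(S_n)/\bigoplus_n C(S_n)$, with the restriction maps as connecting maps. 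Checking that the remainders glue correctly is routine, because the decomposition is locally finite and the $D_n$ go to infinity. Consequently every automorphism $\alpha$ of $A$ with $\rho\circ\alpha=\rho$, where $\rho\colon A\to B$ is the restriction map, extends by the identity on the $Y$-side to an automorphism of $C(\beta X\setminus X)$. Distinct such $\alpha$ give distinct automorphisms, so it suffices to produce $2^{\aleph_1}$ automorphisms of $A$ that act trivially modulo $\ker\rho$.

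\emph{Construction under CH.} Let $h_n\in C(D_n)$ be the distance to $S_n$, normalized so that $\|h_n\|=1$, and let $h$ be the image of $(h_n)$ in $A$. The structures $(C(D_n),h_n)$ live in a countable language, so the reduced product $A$ expanded by $h$ is countably saturated (\cite[\S 16.5]{Fa:STCstar}, as in the proposition above). By Lemma~\ref{L.MetrizableSeparable} each $C(D_n)$ is separable, so under CH $A$ has density character $\aleph_1$. Fix a dense set $\{a_\xi:\xi<\aleph_1\}$ in $A$. We run a back-and-forth construction of length $\aleph_1$ of partial elementary maps $\alpha_\xi$ between separable subalgebras. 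At each step we extend $\alpha_\xi$ by realizing a countable type over a separable subalgebra. Besides the conditions making the extended map elementary, this type contains, for a new element $a$ and each $k$, the condition
\[
\bigl\|(x-a)\,g_k(h)\bigr\|\le 1/k,
\]
where $g_k\colon[0,1]\to[0,1]$ is continuous, equals $1$ on $[0,1/k]$ and vanishes on $[2/k,1]$. These conditions force the limit automorphism to agree with the identity modulo $\ker\rho$. Countable saturation realizes each such type once it is approximately realized. At each stage we arrange two incompatible extensions, namely two images of some element that differ on the interior, i.e.\ away from $h^{-1}(0)$. This gives a binary tree of height $\aleph_1$ and hence $2^{\aleph_1}$ pairwise distinct automorphisms.

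\emph{Main obstacle.} The crux is the approximate realizability of these types. One has to show that an elementary partial map which is already close to the identity near the boundary can be extended to $a$ while staying close to the identity near the boundary, and also in two genuinely different ways away from it. My first attempt is to verify this coordinatewise on the balls $D_n$ and then apply \L o\'s/Feferman--Vaught for reduced products. In a ball $D_n$ one can use homeomorphisms supported in the interior, e.g.\ rotations of a smaller concentric ball damped radially to the identity near $S_n$. By compactness of $D_n$, these approximately implement any finite configuration of a prescribed type while being exactly the identity on $\{h_n\le 2/k\}$. If elementarity of the partial maps proves too rigid to control directly, the fallback is to weaken the requirement to quantifier-free (embedding) types in the style of Proposition~\ref{P.EmbeddingIntoUltrapower} and use interior homeomorphisms of the balls to produce the required approximate witnesses.
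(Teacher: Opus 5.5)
Your reduction is sound. $C(\beta X\setminus X)$ is the pullback you describe, automorphisms of $A$ with $\rho\circ\alpha=\rho$ extend by the identity, and $(A,h)$ is countably saturated of density character $\aleph_1$ under CH. The gap is the step you yourself flag, and it carries all of the difficulty; countable saturation does not touch it. Saturation only realizes types already known to be approximately realized. The type you need is $\alpha_\xi(\mathrm{tp}(a/A_0))$ together with conditions tying $x$ to $a$ near $\tilde h^{-1}(0)$. Since $a\notin A_1$, elementarity of $\alpha_\xi$ says nothing about whether these conditions are jointly consistent. What is required is a relative homogeneity lemma: every partial elementary map between separable subalgebras that is the identity near the boundary extends to an arbitrary new element, stays the identity there, and can do so in two different ways. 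In the two-sorted structure $(A,B,\rho)$ this amounts to $\alpha_\xi$ preserving the type over $A_0$ of the boundary germ $\rho(a)$. That germ ranges over the non-separable algebra $B$ and is controlled by nothing in your construction. So the localization does not remove the difficulty; it relocates it into the requirement that your automorphisms fix $\beta S\setminus S$ pointwise, where $S=\bigcup_n S_n$.

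Neither suggested route supplies this lemma.
\begin{itemize}
\item The coordinatewise argument presupposes that finite pieces of $\alpha_\xi$ are approximately implemented, on almost all $D_n$, by homeomorphisms fixing a collar of $S_n$. After the first step, however, $\alpha_\xi$ is an abstract elementary map whose values are whatever realizations saturation happened to produce, and no such invariant is maintained. Feferman--Vaught computes values of formulas from coordinatewise data. It does not produce homeomorphisms of $D_n$ carrying $c_n$ to the $n$-th coordinate of $\alpha_\xi(c)$, and there is no mechanism turning equality of types into a collar-fixing homeomorphism.
\item The fallback to quantifier-free types fails for a more basic reason. Without quantifier elimination, a partial isomorphism preserving only quantifier-free types need not extend at all, even in a saturated algebra. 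As the paper recalls, quantifier elimination fails for every unital commutative \cstar-algebra other than $\bbC$, $\bbC^2$ and $C(2^\bbN)$.
\item A smaller point: your $g_1$ is identically $1$ on $[0,1]$, so the $k=1$ condition is the global, non-scale-invariant constraint $\|x-a\|\le 1$. Also, the rates $1/k$ are not stable under sums and products of generators, so a new element close to an old product can make the type inconsistent. A single fixed-collar condition $\|(x-a)g(h)\|=0$ would be the right formulation.
\end{itemize}

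For comparison, the paper gives no proof of its own and cites Vignati, whose argument does not use countable saturation of a localized piece. It relies instead on a stratification of the corona algebra of a `flexible' space. Any repair of your approach would likewise have to keep the partial maps tied, at every stage, to genuine collar-fixing homeomorphisms of the pieces, and saturation alone cannot guarantee that.
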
  

The proof uses a clever stratification of \cstar-algebras associated to coronas of the so-called `flexible' spaces (see \cite[Definition~2.1]{vignati2017nontrivial}). 

\subsection{Forcing axioms}
Considerably more interesting (in the author's opinion) and deeper (absolutely) than the trivializing effect of CH on \v Cech--Stone remainders and other quotients is the fact that under forcing axioms, coronas of locally compact, non-compact, Polish spaces have only trivial autohomeomorphisms. 
In the case of zero-dimensional spaces this is a result of Shelah (\cite{Sh:Proper}) asserting that all automorphisms of $\cPN/\Fin$ are (consistently with ZFC) trivial, as adapted to the context of forcing axioms  (in decreasing order of strength) in \cite{ShSte:PFA, Ve:OCA, de2023trivial}.

The following ultimate commutative generalization of these results is taken from \cite[Theorem~C]{vignati2022rigidity} ($\OCAT$ is Todorcevic's version of the Open Colouring Axiom, \cite{To:Partition}, and $\MA$ is Martin's Axiom, \cite{Ku:Set}; see Definition~\ref{D.Trivial} for (non)trivial autohomeomorphisms). 

\begin{theorem}\label{T.Vignati.FA}
	$\OCAT$ and $\MA$ imply that for all second-countable, locally compact, non-compact spaces $X$ and $Y$, every homeomorphism between $\beta X\setminus X$ and $\beta Y\setminus Y$ is trivial. \qed 
\end{theorem}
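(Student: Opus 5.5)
The statement is quoted from \cite[Theorem~C]{vignati2022rigidity} and is marked \qed, so what follows is an outline of how I would organize a proof, not a self-contained argument. By Theorem~\ref{T.GN.Duality} and the Lemma identifying $\cM(C_0(X))/C_0(X)$ with $C(\beta X\setminus X)$, a homeomorphism $g\colon \beta X\setminus X\to\beta Y\setminus Y$ is the same thing as a *-isomorphism
\[
\Lambda\colon C_b(Y)/C_0(Y)\to C_b(X)/C_0(X).
\]
Triviality of $g$ (Definition~\ref{D.Trivial}) translates into the existence of cocompact sets $U\subseteq X$, $V\subseteq Y$ and a homeomorphism $f\colon U\to V$ such that $\Lambda$ is induced by $h\mapsto h\circ f$. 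Thus the aim is to lift $\Lambda$ to a map of this form.

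I would fix exhaustions of $X$ and $Y$ by compact sets $K_n$ and $L_n$, and pass to the ``disjointified'' picture. Partition $X$ (up to overlaps) into compact annuli $K_{n+1}\setminus \mathrm{int}\,K_n$, and write $C_b(X)$ as functions assembled from pieces supported on these annuli. This reduces the problem to lifting $\Lambda$ restricted to subalgebras of the form $\prod_n C(D_n)/\bigoplus_n C(D_n)$, where the $D_n$ are compact pieces. For a partition $\bbN=\bigsqcup_k E_k$ into finite intervals, and for $A\subseteq\bbN$, consider the corresponding block-diagonal subalgebra. The two axioms then do the work:
\begin{enumerate}
\item $\OCAT$ shows that on each such subalgebra, for $A$ ranging over a nonmeager (indeed ccc-over-$\Fin$) ideal, $\Lambda$ has a Borel (in fact asymptotically additive) lift. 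It is given by a sequence of maps between finite pieces, each approximately a *-homomorphism $C(L_{E_k}) \to C(K_{E'_k})$ up to $\varepsilon_k\to0$. The standard route is the Todorcevic colouring on pairs of candidate lifts, with $\MA$ used to uniformize countably many local lifts.
\item $\MA$ (via $\mathfrak b>\aleph_1$ and coherence arguments) glues these local lifts along the ideal into a single lift defined on all of $C_b(Y)$.
\end{enumerate}

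Next, I would use an Ulam-stability result for approximate *-homomorphisms between commutative \cstar-algebras. An approximate character of $C(L)$ is close to an evaluation at a point. From this, each approximate *-homomorphism $C(L_{E_k})\to C(K_{E'_k})$ is close to composition with a continuous map $f_k\colon K_{E'_k}\to L_{E_k}$. Since $\Lambda$ and $\Lambda^{-1}$ are both liftable in this way, the $f_k$ patch to a map $f$ from a cocompact subset of $X$ to $Y$. This $f$ has an asymptotic inverse, which after shrinking to cocompact sets yields a homeomorphism inducing $g$.

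The main obstacle is the $\OCAT$ step: showing that a liftable-on-a-large-ideal *-isomorphism has asymptotically additive local lifts. The difficulty is that, unlike $\cPN/\Fin$, the pieces $C(D_n)$ are not finite and connectedness prevents exact decompositions. My plan there is to discretize each $C(D_n)$ by finite-dimensional approximating subalgebras, namely partitions of unity subordinate to $2^{-n}$-covers. I would then run the Veli\v ckovi\'c-style colouring on these countable-information lifts and transfer back using the Ulam stability above.
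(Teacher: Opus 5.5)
Your outline has the same architecture as the proof the paper relies on. The paper gives no argument beyond citing \cite[Theorem~C]{vignati2022rigidity} and noting that the proof combines forcing axioms with \v Semrl's Ulam stability for approximate *-homomorphisms of commutative \cstar-algebras, and your sketch uses exactly these ingredients: forcing axioms to obtain and glue asymptotically additive local lifts, then Ulam stability to turn them into continuous maps that patch to a trivializing homeomorphism. One correction to your plan for the $\OCAT$ step: for connected pieces $D_n$ the only nonzero finite-dimensional *-subalgebra of $C(D_n)$ is $\bbC 1$, since spans of partitions of unity are merely operator systems, but no such discretization is needed anyway, because each $C(D_n)$ is separable and the colourings can be run directly on the Polish space of candidate lifts.
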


In addition to forcing axioms, the proof of Theorem~\ref{T.Vignati.FA} relies on Ulam-stability of approximate *-homomorphisms between commutative \cstar-al\-geb\-ras proved in \cite{semrl1999non}.

To give an illustration of these results, for an infinite $I\subseteq \bbN$ let 
\[
\textstyle Z_I=\bigsqcup_{n\in I} [0,1]^n. 
\]

\begin{corollary}\label{C.Independent}
	There is an infinite set $I\subseteq \bbN$ such that for every infinite $J\subseteq I$ the assertion that $Z_I$ and $Z_J$ have homeomorphic \v Cech--Stone remainders is independent from ZFC. 
\end{corollary}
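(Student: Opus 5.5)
The plan has two halves: consistency of homeomorphism from CH, and consistency of non-homeomorphism from forcing axioms.

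For the CH half, apply Ghasemi's trick (Theorem~\ref{T.Ghasemi}) to the sequence $X_n=[0,1]^n$, $n\in\bbN$. These are compact metric spaces, so we obtain an infinite $I\subseteq\bbN$ along which $\lim_{n\in I}\varphi^{C([0,1]^n)}$ exists for every sentence $\varphi$. The choice of $I$ is made in ZFC. For every infinite $J\subseteq I$, CH then implies that $\beta Z_I\setminus Z_I$ and $\beta Z_J\setminus Z_J$ are homeomorphic. Since CH is consistent with ZFC, the homeomorphism is consistent.

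For the other half, we show that under $\OCAT+\MA$ the remainders are not homeomorphic whenever $I\setminus J$ is infinite. (If $I\setminus J$ is finite, the remainders are trivially homeomorphic in ZFC. So the statement must be read for $J$ coinfinite in $I$, or $J$ restricted accordingly; I would point this out.) By Theorem~\ref{T.Vignati.FA}, any homeomorphism $g\colon\beta Z_I\setminus Z_I\to\beta Z_J\setminus Z_J$ is trivial. Hence there are cocompact $K\subseteq Z_I$ and $L\subseteq Z_J$ and a homeomorphism $f\colon K\to L$ with $\beta f$ inducing $g$.

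Next, analyse $f$. A cocompact subset of $Z_I$ contains all but finitely many of the components $[0,1]^n$, $n\in I$, entirely; the exceptional ones are removed only partially, leaving a compact remnant. After shrinking $K$ and $L$ to further cocompact sets, still with $f[K]=L$, we may assume $K=\bigsqcup_{n\in I,n\ge N}[0,1]^n$ and that $L$ contains all $[0,1]^m$, $m\in J$, $m\ge M$. A homeomorphism maps connected components onto connected components. Each full component $[0,1]^n$ is a component of $K$, so it is mapped onto a component of $L$. By invariance of domain and dimension, that component must be a full $[0,1]^n$ with $n\in J$; a compact piece of some $[0,1]^m$ with $m\neq n$ cannot be homeomorphic to $[0,1]^n$ by covering dimension. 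Thus, with finitely many exceptions, every $n\in I$ lies in $J$, so $I\setminus J$ is finite.

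The delicate step is this bookkeeping of the finitely many partially removed components in Definition~\ref{D.Trivial}. I will handle it by passing to smaller cocompact sets on both sides, which is legitimate because the induced map on remainders is unchanged. I will then use dimension theory only for the statement that $[0,1]^n$ is not homeomorphic to a subset of $[0,1]^m$ unless $n\le m$, and the symmetric argument applied to $f^{-1}$. Combined with the consistency of $\OCAT+\MA$, this shows that non-homeomorphism is consistent, which completes the independence.
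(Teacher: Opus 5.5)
Your proof is correct and is the paper's argument. Ghasemi's trick applied to $X_n=[0,1]^n$ gives the CH half, and Theorem~\ref{T.Vignati.FA} plus elementary dimension theory shows that under $\OCAT+\MA$ the remainders are homeomorphic only when $I\setminus J$ is finite. You are right that the statement has to be read for $I\setminus J$ infinite, and that is all the paper's own proof delivers.

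One clean-up in your dimension step. A compact piece of $[0,1]^m$ with $m>n$ can be homeomorphic to $[0,1]^n$ (a face), so that sentence is false as stated. Your shrinking step would also need to show that $f[K']$ is still cocompact; the fact that the induced map is unchanged does not give this. Both issues disappear if you do not shrink. Pick $N,M$ with $[0,1]^n\subseteq K$ for $n\in I$, $n\ge N$, and $[0,1]^m\subseteq L$ for $m\in J$, $m\ge M$. For $n\in I$ with $n\ge\max(N,M)$, the component $f[[0,1]^n]$ of $L$ sits in some $[0,1]^m$ with $m\ge n\ge M$, so it equals the full component $[0,1]^m$. Invariance of dimension then gives $m=n\in J$.
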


\begin{proof}
	By Ghasemi's trick (Theorem~\ref{T.Ghasemi}), there exists an infinite $I\subseteq \bbN$ such that for every infinite $J\subseteq I$, CH implies that $\beta Z_I\setminus Z_I$ and $\beta Z_J\setminus Z_J$ are homeomorphic. 
	By Theorem~\ref{T.Vignati.FA} and elementary dimension theory, $\OCAT$ and $\MA$ imply that these spaces are homeomorphic if and only if $I\setminus J$ is finite. 
\end{proof}

See \cite{farah2025corona} for more on rigidity results along these lines. 
It is known to suffice for the analogous conclusion in the case of the Calkin algebra (\cite{Fa:All}, \cite[\S 17]{Fa:STCstar}). 

\subsection{ZFC}

The following is the main result of \cite{DoHa:Universal}. 

\begin{theorem}\label{T.UniversalContinuum}
Every continuum of weight not greater than $\aleph_1$ is a continuous image of $\beta \bbH\setminus \bbH$.
\end{theorem}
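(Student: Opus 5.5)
By Theorem~\ref{T.GN.Duality}, the statement is equivalent to the following. For every continuum $X$ of weight at most $\aleph_1$, the algebra $C(X)$ embeds unitally into $C(\beta\bbH\setminus\bbH)$. This is the same as embedding $C(X)$ into the corona $C_b(\bbH)/C_0(\bbH)$. Since $C(X)$ has density character at most $\aleph_1$, I would write it as the union of a continuous increasing chain of separable unital \cstar-subalgebras $A_\alpha$, for $\alpha<\omega_1$. By Lemma~\ref{L.MetrizableSeparable} and Theorem~\ref{T.GN.Duality}, each $A_\alpha$ is $C(X_\alpha)$ for a metrizable continuum $X_\alpha$, which is connected because $X$ maps onto it. The plan is to build compatible embeddings $\Phi_\alpha\colon A_\alpha\to C(\beta\bbH\setminus\bbH)$ by transfinite recursion and take their union. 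At limit stages we just take the union; note that only countably many earlier stages precede each successor stage.

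\textbf{Base and metrizable case.} First treat a single metrizable continuum $Y$. By Proposition~\ref{P.Continuum}, $C(Y)$ embeds into an ultrapower of $C([0,1])$. Better still, the proof of that proposition gives Peano continua $Y_n\supseteq Y$ with $Y\subseteq Y_n\subseteq (Y)_{1/n}$ inside $Q$, each a continuous image of $[0,1]$. I would concatenate paths $\gamma_n\colon[n,n+1]\to Y_{n}$, arranging $\gamma_n(n+1)=\gamma_{n+1}(n+1)$ by inserting connecting arcs inside $Y_{n}$. This yields a continuous $\gamma\colon\bbH\to Q$ whose tails converge to $Y$ in the Hausdorff metric and which, for each $n$, passes within $1/n$ of every point of $Y$ on every late enough interval. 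Then $g\mapsto (g\circ\gamma)+C_0(\bbH)$, with $g$ a Tietze extension to $Q$ as in Lemma~\ref{L.GromovHausdorf-convergence}, is an isometric unital *-homomorphism $C(Y)\to C_b(\bbH)/C_0(\bbH)$. Dually, $\beta\gamma$ maps $\beta\bbH\setminus\bbH$ onto $Y$. This is the metrizable case, in ZFC.

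\textbf{Successor step (main obstacle).} Given $\Phi_\alpha$ on $C(X_\alpha)$ and the extension $C(X_\alpha)\subseteq C(X_{\alpha+1})$, dual to a surjection $\pi\colon X_{\alpha+1}\to X_\alpha$, we must extend $\Phi_\alpha$. There is no countable saturation to fall back on except Theorem~\ref{T.Saturated.betaH-H}, which does give it. So I would phrase the extension as realizing a countable type. Fix a countable dense set of generators $b_k$ of $C(X_{\alpha+1})$. The type over the countable set $\Phi_\alpha[C(X_\alpha)]\subseteq C(\beta\bbH\setminus\bbH)$ says that the new variables satisfy every *-polynomial norm relation that holds among $b_k$ and the elements of $C(X_\alpha)$. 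By countable saturation it suffices to show the type is approximately realized. This requires lifting the finitely many approximate relations, via a path construction as in the metrizable case, relative to the given lifts of the $\Phi_\alpha$-images on $\bbH$.

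Concretely, represent $\Phi_\alpha$ (restricted to finitely many generators) by a map $\gamma\colon\bbH\to X_\alpha'$, where $X_\alpha'$ is an $\varepsilon$-neighbourhood of $X_\alpha$ in $Q$. It then suffices to lift $\gamma$ approximately through $\pi$ on a tail. Since $\pi$ is an onto map between metrizable continua, and $\gamma$ restricted to each interval is a path whose range is eventually Hausdorff-close to $X_\alpha$, I would use connectedness of $X_{\alpha+1}$ together with $\varepsilon$-chains. The fibres of $\pi$ can be traversed approximately by paths in small Peano neighbourhoods of $X_{\alpha+1}$, which lets us build $\delta\colon[n,\infty)\to Q$ with $\pi\circ\delta$ $\varepsilon$-close to $\gamma$ after reparametrization. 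Reparametrization is the delicate point, since it changes $\gamma$ and hence the $\Phi_\alpha$-values. I expect the need to handle this to be the real difficulty. I would first try to keep $\gamma$ fixed on a sparse set of times and use short detours that contribute only $\varepsilon$-errors to the finitely many specified functions. Once approximate realizability is established, countable saturation yields the extension, and the union of the $\Phi_\alpha$ is the required embedding. Applying Theorem~\ref{T.GN.Duality} then finishes the proof.
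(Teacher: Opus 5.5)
Your reduction to embedding $C(X)$ unitally into $C_b(\bbH)/C_0(\bbH)$, the $\omega_1$-chain of separable subalgebras, and the metrizable base case are all fine. The successor step, however, has a genuine gap. You need the type of $C(X_{\alpha+1})$ over $C(X_\alpha)$, transported by $\Phi_\alpha$, to be approximately realized. For an arbitrary embedding $\Phi_\alpha$ this is false, so countable saturation has nothing to act on.

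Here is a concrete counterexample. Let $\theta\colon\bbH\to[0,2\pi]$ increase linearly from $0$ to $2\pi$ on each $[n,n+\frac12]$ and decrease back to $0$ on $[n+\frac12,n+1]$. Let $u$ be the class of $e^{i\theta}$ in $C(\beta\bbH\setminus\bbH)$. Its spectrum is the whole circle, so $z\mapsto u$ is an embedding $\Phi_0$ of $C(S^1)$, and your base case may well produce a $\Phi_0$ of this kind. Now extend along the double cover $\pi(w)=w^2$; such an inclusion occurs in a chain for any $X$ mapping onto the dyadic solenoid. An extension must send $w$ to a unitary $v$ with $v^2=u$. With $v_0$ the class of $e^{i\theta/2}$, we get $(vv_0^*)^2=1$. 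So $vv_0^*$ is a continuous $\{\pm1\}$-valued function on the connected space $\beta\bbH\setminus\bbH$, hence $v=\pm v_0$, whose spectrum is a half-circle. Thus no injective extension exists, and the same argument with $\varepsilon$-errors shows the type is not even approximately realized.

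This also shows why your path-lifting remedy cannot be rescued by reparametrization or detours. $\Phi_\alpha$ is fixed modulo $C_0(\bbH)$, so an approximate lift $\delta$ must satisfy $\pi\circ\delta\approx\gamma$ uniformly on a tail, and path lifting through a covering map is rigid.

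The missing idea is an inductive hypothesis that makes every partial map elementary; this is what the paper's short proof supplies. Both $X$ and $\beta\bbH\setminus\bbH$ are continua, so Proposition~\ref{P.Continuum} gives them the same universal cotheory. By Proposition~\ref{P.EmbeddingIntoUltrapower} there is then an embedding $\iota$ of $C(X)$ into an ultrapower $C(\beta\bbH\setminus\bbH)^{\cU}$. Run your recursion requiring that, for every formula $\varphi$ and every tuple $\bar a$ in $A_\alpha$, the value $\varphi(\Phi_\alpha(\bar a))$ computed in $C(\beta\bbH\setminus\bbH)$ equals $\varphi(\iota(\bar a))$ computed in the ultrapower. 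The empty map qualifies as a starting point, since the two algebras are elementarily equivalent. By \L o\'s's theorem, $C(\beta\bbH\setminus\bbH)$ is an elementary submodel of its ultrapower. Hence the full type of $\iota[A_{\alpha+1}]$ over $\iota[A_\alpha]$, transported by $\Phi_\alpha$, is approximately realized, and Theorem~\ref{T.Saturated.betaH-H} realizes it; limit stages are unions. This is the standard fact that a countably saturated structure contains a copy of every structure of density character at most $\aleph_1$ satisfying its universal theory. With it, neither your metrizable base case nor any path lifting is needed.
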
  

\begin{proof} 
	Let $X$ be a continuum of weight $\leq \aleph_1$. 
By Proposition~\ref{P.Continuum}, $C(X)$ and $C(\beta\bbH\setminus \bbH)$ have the same universal theory.

	Since $C(\beta\bbH\setminus \bbH)$ is countably saturated by Theorem~\ref{T.Saturated.betaH-H},  
	by Proposition~\ref{P.EmbeddingIntoUltrapower}, every \cstar-algebra $A$ of density character not greater than $\aleph_1$ with the same universal theory as $C(\beta\bbH\setminus \bbH)$ 
embeds into it, and the conclusion follows.
\end{proof}

By the L\"owenheim--Skolem theorem (\cite[Theorem~7.1.3]{Fa:STCstar}), there is a compact metrizable space $X$ such that $C(X)$ is elementarily equivalent to $C(\beta\bbH\setminus \bbH)$. 
If CH holds, then $C(\beta\bbH\setminus\bbH)$ is isomorphic to the ultrapower of $C(X)$ associated with any nonprincipal ultrafilter $\cU$ on $\bbN$. One could ask whether there is a canonical example of a compact metrizable space $X$ such that $C(X)$ is elementarily equivalent to $C(\beta\bbH\setminus \bbH)$; however, it is not clear whether the theory of the latter is decidable in ZFC. 
In \cite[Theorem 3.2]{dow1993cech} it was proved that certain subcontiuum of $\beta \bbH\setminus \bbH$ is coelementarily equivalent (and homeomorphic if CH holds) to it.

\section{Reflection}\label{S.Reflection}

Elementary submodels are routinely used to prove reflection results, often in conjunction with nontrivial set-theoretic assumptions. 
With Gelfand--Naimark duality this is possible, but rather subtle for more than one reason. 

Before delving into this section, the reader should be warned that results along similar lines  had been obtained without using \cstar-algebras in for example  \cite{bandlow1991characterization}, \cite{bandlow1991construction}, 
\cite{eisworth2006elementary}, \cite{dow1992set}, 
\cite{kunen2003compact}. Nevertheless, it is (in this author's opinion) likely that the \cstar-algebraic vantage point will yield additional applications.

I will assume that the reader is familiar with the structure $H_\theta$ of all sets whose transitive closures have cardinality $<\theta$ (see e.g., \cite[Appendix~A]{Fa:STCstar}). 
Suppose that $X$ is a compact Hausdorff space and $\theta$ is a sufficiently large regular cardinal ($\theta>2^{2^{|X|}}$ should suffice). 
Suppose that $M\prec H_\theta$ is an elementary submodel such that $X\in M$. 
Unless $M$ is closed under $\omega$-sequences, $M\cap C(X)$ is not norm-closed, and if $M$ is countable it is not even a $\bbC$-algebra. This is remedied by taking the norm-closure of $C(X)\cap M$. 
By Theorem~\ref{T.GN.Duality}, the Gelfand spectrum of $\overline{C(X)\cap M}$, denoted $X^M$, is a continuous image of~$X$.  
One should note that $X^M$ is not a subset of $M$ (and not even $C(X)\cap M$ is a subset of $M$, unless $M$ is closed under $\omega$-sequences). 
See \cite[Lemma 2.5]{farah2021corson} for a more detailed analysis of the relation between $X$ and $X^M$. 

On the brighter side, the set
\[
\{\overline{C(X)\cap M}: M\prec H_\theta, X\in M, M\text{ countable}\}
\]
is a club (closed unbounded set)\footnote{Warning: This `club' is not closed under unions of countable increasing sequences. It is instead closed under taking suprema of countable increasing sequences, hence a club in the sense of \cite[Definition~6.2.6]{Fa:STCstar}.} in the poset $\Sep(C(X))$ of separable substructures of $C(X)$. Such clubs are reasonably well-behaved (see \cite[Chapters 6 and 7]{Fa:STCstar}), and stationary subsets of $\Sep(C(X))$ even satisfy a continuous analog of the Pressing Down Lemma (\cite[Proposition~6.5.6]{Fa:STCstar}). 
One should however remember that it is the continuous functions on $X$, rather than the points of $X$, that the elementary submodels of $H_\theta$ can relate to.

Using these methods, a reflection principle for Corson compacta was proved in \cite[Theorem 1]{farah2021corson} by combining Gelfand--Naimark duality with set-theoretic reflection methods. These results had been proven earlier without appealing to the Gelfand--Naimark duality.

The following is \cite[Proposition 2.6]{farah2021corson}. 

\begin{proposition}
Suppose that $X$ is a compact Hausdorff space such that every continuous image of $X$ of weight at most $2^{\aleph_0}$ is Fr\'echet. Then $X$ is Fr\'echet. \qed
\end{proposition}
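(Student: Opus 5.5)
We argue by contraposition: assuming $X$ is not Fr\'echet, we produce a continuous image of $X$ of weight at most $2^{\aleph_0}$ that is not Fr\'echet. Recall that $X$ is Fr\'echet if whenever $x\in\overline{A}$ there is a sequence in $A$ converging to $x$. So fix $A\subseteq X$ and $x\in \overline A$ such that no sequence from $A$ converges to $x$.

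Fix a large regular $\theta$ and an elementary submodel $M\prec H_\theta$ with $X,A,x\in M$, $|M|=2^{\aleph_0}$, and $M$ closed under $\omega$-sequences ($M^\omega\subseteq M$). This is possible since $(2^{\aleph_0})^{\aleph_0}=2^{\aleph_0}$. By closure under $\omega$-sequences, $C(X)\cap M$ is norm-closed, so it is a unital \cstar-subalgebra of $C(X)$. By Theorem~\ref{T.GN.Duality} its spectrum $X^M$ is a continuous image of $X$ via $\pi\colon X\to X^M$, with $\pi(y)$ the restriction of evaluation at $y$. The density character of $C(X)\cap M$ is at most $|M|=2^{\aleph_0}$, so $X^M$ has weight at most $2^{\aleph_0}$.

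We claim $X^M$ is not Fr\'echet, witnessed by $\pi[A]$ and $\pi(x)$. Continuity gives $\pi(x)\in\overline{\pi[A]}$. Suppose instead that $\pi(a_n)\to \pi(x)$ for some $a_n\in A$. The main obstacle is that the $a_n$ need not lie in $M$, and $M$ only sees functions, not points. To handle this, we replace the sequence by one inside $M$.

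Convergence $\pi(a_n)\to\pi(x)$ means $f(a_n)\to f(x)$ for every $f\in C(X)\cap M$. Consider the countable set of conditions in $n$ variables over $M$. Since $M$ is $\omega$-closed, we can try to reflect: the statement ``there is a sequence $(b_n)$ in $A$ with $f(b_n)\to f(x)$ for all $f$ in a given countable family $F$'' has parameters in $M$ once $F\in M$. Here the approach is to first take a countable $F\subseteq C(X)\cap M$ capturing the relevant behaviour. By $\omega$-closure $F\in M$, so elementarity gives such $(b_n)\in M$ with each $b_n\in A\cap M$.

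It then remains to show that $(b_n)$ converges to $x$ in $X$, contradicting the choice of $A$ and $x$. Since $(b_n)\in M$, the statement ``$b_n\to x$'' is decided in $M$. If it fails, $M$ contains an open neighbourhood $U$ of $x$ and a subsequence avoiding $U$. By Urysohn inside $M$ there is then $f\in C(X)\cap M$ with $f(x)=1$ and $f=0$ off $U$. We want this $f$ to separate the subsequence from $x$ already through $\pi$, so $F$ should have been chosen large enough. The cleanest fix is a closing-off argument: build $F$ and the sequence together, or instead demand in $M$ that $(b_n)$ satisfy $f(b_n)\to f(x)$ for all $f\in C(X)\cap M$. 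That is not a countable family, but we can choose the $b_n$ so that $\pi(b_n)=\pi(a_n)$, because $\pi$ maps $A\cap M$ densely enough. Making this reflection step rigorous, that is, ensuring that convergence in $X^M$ of images of points of $M$ implies convergence in $X$ for sequences lying in $M$, is where the main care is needed. We would try to use that for $(b_n)\in M$, elementarity makes $C(X)\cap M$ sufficient to witness non-convergence.
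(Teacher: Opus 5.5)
Your set-up is the natural one for the paper's reflection framework, though the paper itself only cites \cite{farah2021corson} for this statement. An $\omega$-closed $M\prec H_\theta$ of size $2^{\aleph_0}$ containing $X$, $A$, $x$ makes $C(X)\cap M$ a norm-closed unital \cstar-subalgebra, and its spectrum $X^M$ is a continuous image of $X$ of weight at most $2^{\aleph_0}$. Your final observation is also correct: if a sequence $(b_n)$ of points of $A$ is itself an element of $M$ and does not converge to $x$, then elementarity and Urysohn's lemma give $f\in C(X)\cap M$ showing $\pi(b_n)\not\to\pi(x)$.

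However, the proof is not finished, and the claim you set out to prove is false in general: $\pi[A]$ and $\pi(x)$ need not witness that $X^M$ is not Fr\'echet. Let $\kappa=(2^{\aleph_0})^+$, $X=2^\kappa$, $x$ the constant function $1$, and $A$ the set of points vanishing on some final segment $[\gamma,\kappa)$.
Then $x\in\overline A$. No sequence from $A$ converges to $x$: if $a_n$ vanishes on $[\gamma_n,\kappa)$, all $a_n$ vanish at $\sup_n\gamma_n<\kappa$.
But $\delta=\sup(M\cap\kappa)<\kappa$ by regularity, and $M\cap\kappa\subseteq\delta$. The point $a\in A$ equal to $1$ below $\delta$ and $0$ from $\delta$ on satisfies $\pi(a)=\pi(x)$. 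This is because each $f\in C(X)\cap M$ factors through the projection onto a countable set of coordinates that can be taken in $M$, hence is contained in $M\cap\kappa\subseteq\delta$.
So $\pi(x)\in\pi[A]$, and no contradiction can come from $\pi(a_n)\to\pi(x)$.

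The same example refutes the step `choose $b_n\in A\cap M$ with $\pi(b_n)=\pi(a_n)$'. Each $b\in A\cap M$ vanishes at a coordinate lying in $M$ (the least $\gamma$ with $b$ vanishing on $[\gamma,\kappa)$), so $\pi(b)\neq\pi(x)$. Elementarity only makes $\pi[A\cap M]$ dense in $\pi[A]$. The closing-off over a countable $F$ does not stabilize either, since each new sequence in $M$ may need a new separating function outside $F$.

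The missing idea is to replace $A$ by $A\cap M$.

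First, $\pi(x)\in\overline{\pi[A\cap M]}$. A basic neighbourhood of $\pi(x)$ in $X^M$ is given by finitely many $f_1,\dots,f_k\in C(X)\cap M$ and a rational $\varepsilon>0$. The statement `some $a\in A$ has $|f_i(a)-f_i(x)|<\varepsilon$ for all $i$' holds in $H_\theta$ and has parameters in $M$, so it has a witness in $A\cap M$.

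Now suppose $X^M$ were Fr\'echet. Then there would be $b_n\in A\cap M$ with $\pi(b_n)\to\pi(x)$. By $\omega$-closure $(b_n)_n\in M$, and your elementarity argument now applies. Since $b_n\not\to x$ in $X$, Urysohn's lemma shows that $H_\theta$ satisfies `there is $f\in C(X)$ with $f(x)=1$ and $f(b_n)=0$ for infinitely many $n$'. So such an $f$ lies in $M$, contradicting $\pi(b_n)\to\pi(x)$.

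Thus $\pi[A\cap M]$ and $\pi(x)$ witness that $X^M$ is not Fr\'echet. The essential point is that the sequence supplied by Fr\'echetness of $X^M$ must consist of points of $M$, so that $\omega$-closure puts the whole sequence into $M$.
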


A similar reflection principle for uncountable tightness was proved, assuming CH, in \cite{juhasz1992convergent}. 
For more information on the unruly behaviour of elementary submodels in this context, see the examples given in \cite[\S 4]{farah2021corson}. 

\section{An open-ended concluding section}

The \cstar-algebra $C(2^\bbN)$ is injectively universal in the class of separable, unital, commutative, \cstar-algebras; this is just a reformulation of the familiar fact that the Cantor space is surjectively universal in the class of compact metrizable spaces, using Lemma~\ref{L.MetrizableSeparable}. 
On the other hand, the class of compact metrizable continua has no surjectively universal object (\cite{waraszkiewicz1934probleme}), and therefore there is no injectively universal unital, abelian, \cstar-algebra without nontrivial projections.

One often needs to analyze not just a single compact Hausdorff space but a continuous function  $f\colon X\to Y$ between such spaces.  For such analysis using elementary submodels of $H_\theta$ (as in \S\ref{S.Reflection}) seee.g., \cite{eisworth2006elementary} and  \cite{bandlow1991construction}. By Theorem~\ref{T.GN.Duality}, such triple $f\colon X\to Y$ corresponds to a triple $(C(X), C(Y), f_*)$ of \cstar-algebras and a *-homomorphism between them. The class of such triples is easily shown to be axiomatizable. 

One interesting situation is the following. Using the notation from \S\ref{S.ultracoproducts}, given compact metrizable spaces $X_n$, for $n\in \bbN$, and a nonprincipal ultrafilter $\cU$ on $\bbN$, the space $\sum_\cU X_n$ is a subspace of $\sum_{\Fin} X_n$ (where $\Fin$ is the Fr\'echet filter). Equivalently, $\prod_\cU C(X_n)$ is a quotient of $\prod_{\Fin} C(X_n)$. Theorem~\ref{T.Between} below is (modulo Theorem~\ref{T.GN.Duality}) a special case of \cite[Theorems~E and C]{farah2019between}, minus the notation ($2^{\bbN}$ stands for the Cantor space).

Recall that $\beta (X\times \bbN)\setminus (X\times \bbN)$ is homeomorphic to $\sum_{\Fin} X$ (Lemma~\ref{L.betaX}). 

\begin{theorem} \label{T.Between} Suppose that $X$ is a compact Hausdorff space and $\cU$ is a nonprincipal ultrafilter on $\bbN$. Then the following holds. 
	\begin{enumerate}
		\item \label{1.Between}  $\sum_{\Fin} X$   is coelementarily equivalent to $\sum_\cU (X\times 2^{\bbN})$. 
		
		\item \label{2.Between} If CH holds and $X$ is metrizable, then $\sum_{\Fin} X$   and $\sum_\cU (X\times 2^{\bbN})$ are homeomorphic. 
		
		\item \label{3.Between} If CH holds and $X$ is metrizable, then the natural copy of $\sum_\cU X$ inside $\sum_{\Fin} X$   is a retract if and only if $\cU$ is a P-point.
	\end{enumerate}   
\end{theorem}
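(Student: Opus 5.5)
By Theorem~\ref{T.GN.Duality}, everything can be restated in terms of \cstar-algebras, and the proof will be run on the algebraic side throughout. Write $A=C(X)$. Then $C(\sum_{\Fin} X)=\ell_\infty(A)/c_0(A)$, which is $\prod_{\Fin} C(X)$, and $C(\sum_\cU (X\times 2^\bbN))=\prod_\cU C(X\times 2^\bbN)$. Moreover $C(X\times 2^\bbN)\cong C(X)\otimes C(2^\bbN)$, which is $A$ tensored with the commutative algebra whose projections form the atomless countable Boolean algebra.

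For \eqref{1.Between} the plan is a Feferman--Vaught analysis on both sides. By Ghasemi's Feferman--Vaught theorem, the theory of $\prod_{\Fin} A$ is determined by two pieces of data: the theory of $A$ (constant coordinates) and the theory of the Boolean algebra $\cPN/\Fin$, i.e.\ the atomless Boolean algebra. On the other side, I will show that $C(X\times 2^\bbN)$ behaves like a `reduced product of copies of $A$ over the atomless Boolean algebra $\Clop(2^\bbN)$'. Concretely, a function in $C(X\times 2^\bbN)$ is uniformly approximated by functions constant in the $2^\bbN$-coordinate on the pieces of a clopen partition. This lets one run the same Feferman--Vaught reduction: sentences evaluated in $C(X\times 2^\bbN)$ are computed from the theory of $A$ and the theory of atomless Boolean algebras. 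Passing to the ultrapower with Łoś's theorem preserves the theory. The main obstacle is exactly here. Feferman--Vaught is usually stated for products over index sets, and $C(X\times 2^\bbN)$ is a continuous field rather than a product. The first thing I would try is to write $C(X\times 2^\bbN)$ as a direct limit of finite products $A^{2^n}$, with diagonal-type embeddings indexed by the Cantor tree. Then I would verify that the Feferman--Vaught values stabilize along this limit because finite Boolean algebras $2^{2^n}$ approximate the atomless theory. An alternative would be to apply the general Feferman--Vaught theorem for reduced products over Boolean algebras (sheaves over Stone spaces), viewing $\prod_\cU C(X\times 2^\bbN)$ as a reduced product of $A$ over the ultrapower of $\Clop(2^\bbN)$, which is again atomless.

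For \eqref{2.Between}: when $X$ is metrizable, both algebras have density character $2^{\aleph_0}=\aleph_1$ under CH. Both are countably saturated by Theorem~\ref{T.Ultraproducs.Saturated}, applied respectively to the reduced product $\prod_{\Fin}$ and to the ultraproduct. Under CH they are therefore saturated. By \eqref{1.Between} they are elementarily equivalent, and Keisler's uniqueness theorem for saturated models (as in the proof of Proposition~\ref{P.KeislerShelah}) makes them isomorphic. Gelfand--Naimark duality then gives the homeomorphism.

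For \eqref{3.Between}, the subspace $\sum_\cU X$ is a retract of $\sum_{\Fin} X$ exactly when the quotient map $\prod_{\Fin} A\to\prod_\cU A$ has a right inverse, i.e.\ a unital *-homomorphism lifting. If $\cU$ is a P-point, I would build the lifting under CH. I would enumerate a dense set of size $\aleph_1$ of $\prod_\cU A$ and recursively extend separable partial liftings. At countable stages, the P-point property gives a single set in $\cU$ modulo which countably many chosen representatives are coherent, and countable saturation of $\prod_{\Fin} A$ realizes the resulting type. Conversely, if $\cU$ is not a P-point, take a partition into sets not in $\cU$ witnessing this. A lifting restricted to the projections of the Boolean subalgebra $\cPN/\Fin\subseteq \prod_{\Fin} A$ (constant scalar sequences) would yield a homomorphism from $\cPN/\cU$ to $\cPN/\Fin$ splitting the quotient. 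Evaluating it on the partition pieces produces a set in $\cU$ almost disjoint from every piece, a contradiction. The forward direction's recursion is the delicate step, but it mirrors the classical $\cPN$ argument.
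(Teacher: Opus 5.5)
\textbf{The gap is in the `only if' half of \eqref{3.Between}.} A right inverse $\Psi$ of $\pi_\cU$ is defined on $\prod_\cU C(X)$. The algebra $\cPN/\Fin$, the projections of the scalar part $\ell_\infty/c_0$, lives in the codomain $\prod_{\Fin}C(X)$. Moreover, $\pi_\cU$ maps $\ell_\infty/c_0$ onto $\ell_\infty/c_\cU\cong\bbC$, so your `$\cPN/\cU$' is the two-element algebra. The splitting you extract is therefore the trivial map $\{0,1\}\to\cPN/\Fin$, which exists for every $\cU$. The pieces $F_k$ are not in $\cU$, so their classes are $0$ and lift to $0$; nothing about $\cU$ is recovered.

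The argument cannot be repaired within the scalar part, because it uses nothing about $X$. If $X$ is finite (e.g.\ a point), then $\sum_{\Fin}X$ is a disjoint union of $|X|$ copies of $\beta\bbN\setminus\bbN$ and $\sum_\cU X$ is the finite set $\{\cU\}\times X$. Sending the copy indexed by $x$ to $(\cU,x)$ is a retraction for every $\cU$. So the converse in \eqref{3.Between} needs $X$ infinite, a hypothesis the statement omits. Any proof must work with non-scalar elements of $\prod_\cU C(X)$ whose representing sequences encode the partition. It must also handle the fact that a lift is only correct on a set in $\cU$ that depends on the element, which is exactly where non-P-points cause trouble. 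The paper does not argue this itself; it imports \eqref{3.Between} from \cite[Theorem~C]{farah2019between}.

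\textbf{Two further corrections.} In \eqref{1.Between}, your first route fails. The diagonal maps $A^{2^n}\to A^{2^{n+1}}$ are not elementary, and theories do not pass to direct limits along non-elementary maps. Also, finite Boolean algebras are atomic, so they do not approximate the atomless theory. For $A=\bbC$, each $\bbC^{2^n}$ has a minimal nonzero projection while the limit $C(2^\bbN)$ has none.

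What works is your second idea in a cleaner form. The functions in $C(X\times 2^\bbN)$ that are locally constant in the second coordinate form a dense subalgebra, namely the Boolean power of $C(X)$ by the countable atomless Boolean algebra. A Feferman--Vaught reduction for it evaluates formulas through Boolean-algebra formulas in $\Clop(2^\bbN)$. This parallels Ghasemi's theorem for $\prod_{\Fin}C(X)$, with $\cPN/\Fin$ in place of $\Clop(2^\bbN)$. Completeness of the theory of atomless Boolean algebras then gives $C(X\times 2^\bbN)\equiv\prod_{\Fin}C(X)$; there is no need to reinterpret the ultrapower. This is \cite[Proposition~3.5]{farah2019between}, which the paper simply cites. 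Your argument for \eqref{2.Between} is how \cite[Theorem~E]{farah2019between} follows from it.

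In the P-point half of \eqref{3.Between}, `countable saturation realizes the resulting type' hides the real work. On the chosen set $U\in\cU$ you may use representatives. Off $U$, however, the required type of the new element is consistent only if the partial lift already preserves the relevant existential information there. So the recursion must carry such an invariant, together with a coherent $\subseteq^*$-decreasing choice of the sets $U$.
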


\begin{proof} In \cite{farah2019between} the notation $C(X)^\infty$, $C(X)^\cU$ is used for $\prod_{\Fin} C(X)$ and $\prod_\cU C(X)$, respectively.

	In \cite[\S 3]{farah2019between}, the functor $K$ is defined by $K(C(X))=C(X\otimes 2^{\bbN})$.  Thus \cite[Proposition~3.5]{farah2019between} implies \eqref{1.Between} and \cite[Theorem~E]{farah2019between} gives \eqref{2.Between}. 
	Finally, there is a *-homomorphism $\Psi\colon \prod_\cU C(X)\to \prod_{\Fin} C(X)$   such that the quotient map $\pi_\cU\colon \prod_{\Fin} C(X)\to \prod_\cU C(X)$ satisfies $\Psi\circ \pi_\cU=\id_{\prod_{\Fin}} C(X)$ (\cite[Theorem~C]{farah2019between}).  This, together with Theorem~\ref{T.GN.Duality},  gives \eqref{3.Between}. 
\end{proof}

\bibliographystyle{plain}
\bibliography{gn-bib}

\begin{thebibliography}{10}

\bibitem{balogh1996small}
Z.T. Balogh.
\newblock A small {D}owker space in {ZFC}.
\newblock {\em Proc. Amer. Math. Soc.}, 124(8):2555--2560, 1996.

\bibitem{bandlow1991characterization}
I.~Bandlow.
\newblock A characterization of {C}orson-compact spaces.
\newblock {\em Comment. Math. Univ. Carolin}, 32(3):545--550, 1991.

\bibitem{bandlow1991construction}
I.~Bandlow.
\newblock A construction in set-theoretic topology by means of elementary
  substructures.
\newblock {\em Z. Math. Logik Grundlag. Math.}, 37(5):467--480, 1991.

\bibitem{Bankston1987reduced}
P.~Bankston.
\newblock {Reduced coproducts of compact Hausdorff spaces}.
\newblock {\em J. Symb. Log.}, 52(2):404--424, jun 1987.

\bibitem{bankston1997co}
P.~Bankston.
\newblock Co-elementary equivalence, co-elementary maps, and generalized arcs.
\newblock {\em Proc. Amer. Math. Soc.}, 125(12):3715--3720, 1997.

\bibitem{bankston2000some}
P.~Bankston.
\newblock Some applications of the ultrapower theorem to the theory of
  compacta.
\newblock {\em Applied Categorical Structures}, 8(1):45--66, 2000.

\bibitem{bankstonsurvey}
P.~Bankston.
\newblock A survey of ultraproduct constructions in general topology.
\newblock {\em Topology Atlas Invited Contributions}, (2):1--32, 2003.

\bibitem{bartovsovaa2011lelek}
D.~Barto{\v{s}}ov{\'a}, K.P. Hart, L.C. Hoehn, and B.~van~der Steeg.
\newblock Lelek's problem is not a metric problem.
\newblock {\em Top. Appl.}, 158:2479--2484, 2011.

\bibitem{BYBHU}
I.~Ben~Yaacov, A.~Berenstein, C.W. Henson, and A.~Usvyatsov.
\newblock Model theory for metric structures.
\newblock In Z.~Chatzidakis et~al., editors, {\em Model Theory with
  Applications to Algebra and Analysis, Vol. II}, number 350 in London Math.
  Soc. Lecture Notes Series, pages 315--427. London Math. Soc., 2008.

\bibitem{Black:Operator}
B.~Blackadar.
\newblock {\em Operator algebras}, volume 122 of {\em Encyclopaedia of
  Mathematical Sciences}.
\newblock Springer-Verlag, Berlin, 2006.
\newblock Theory of \cstar-algebras and von Neumann algebras, Operator Algebras
  and Non-commutative Geometry, III.

\bibitem{chang1966continuous}
C.C. Chang and H.J. Keisler.
\newblock {\em Continuous model theory}.
\newblock Princeton: Princeton University Press, 1966.

\bibitem{de2023trivial}
B.~De~Bondt, I.~Farah, and A.~Vignati.
\newblock Trivial automorphisms of reduced products.
\newblock {\em Israel J. of Math.}, to appear.

\bibitem{dow1988introduction}
A.~Dow.
\newblock An introduction to applications of elementary submodels to topology.
\newblock {\em Topology Proc.}, 13(1):17--72, 1988.

\bibitem{dow1992set}
A.~Dow.
\newblock Set theory in topology.
\newblock {\em Recent progress in general topology (Prague, 1991);
  North-Holland, Amsterdam}, pages 167--197, 1992.

\bibitem{dow1993cech}
A.~Dow and K.P. Hart.
\newblock {\v{C}}ech--{S}tone remainders of spaces that look like $[0,\infty]$.
\newblock {\em Acta Univ. Carolinae. Mathematica et Physica}, 34(2):31--39,
  1993.

\bibitem{DoHa:Universal}
A.~Dow and K.P. Hart.
\newblock A universal continuum of weight {$\aleph$}.
\newblock {\em Trans. Amer. Math. Soc.}, 353(5):1819--1838, 2001.

\bibitem{dow2002applications}
A.~Dow and K.P. Hart.
\newblock Applications of another characterization of {$\beta \bbN\setminus
  \bbN$}.
\newblock {\em Top. Appl.}, 122(1-2):105--133, 2002.

\bibitem{eagle2015quantifier}
C.J. Eagle, I.~Farah, E.~Kirchberg, and A.~Vignati.
\newblock Quantifier elimination in \cstar-algebras.
\newblock {\em International Mathematics Research Notices},
  2017(24):7580--7606, 2017.

\bibitem{eagle2015pseudoarc}
C.J. Eagle, I.~Goldbring, and A.~Vignati.
\newblock The pseudoarc is a co-existentially closed continuum.
\newblock {\em Top. Appl.}, 207:1--9, 2016.

\bibitem{eagle2015saturation}
C.J. Eagle and A.~Vignati.
\newblock Saturation and elementary equivalence of \cstar-algebras.
\newblock {\em J. Funct. Anal.}, 2015.

\bibitem{eisworth2006elementary}
T.~Eisworth.
\newblock Elementary submodels and separable monotonically normal compacta.
\newblock In {\em Proceedings of the 20th {S}ummer {C}onference on {T}opology
  and its {A}pplications}, volume~30, pages 431--443, 2006.

\bibitem{Fa:AQ}
I.~Farah.
\newblock {\em Analytic quotients: theory of liftings for quotients over
  analytic ideals on the integers}, volume 148 of {\em Memoirs Amer. Math.
  Soc.}
\newblock Amer. Math. Soc., 2000.

\bibitem{Fa:All}
I.~Farah.
\newblock All automorphisms of the {C}alkin algebra are inner.
\newblock {\em Ann. of Math. (2)}, 173:619--661, 2011.

\bibitem{Fa:STCstar}
I.~Farah.
\newblock {\em Combinatorial Set Theory of \cstar-algebras}.
\newblock Springer Monographs in Mathematics. Springer, 1st edition, 2019.

\bibitem{farah2019between}
I.~Farah.
\newblock Between reduced powers and ultrapowers.
\newblock {\em JEMS}, 25(11):4369--4394, 2022.

\bibitem{farah2025corona}
I.~Farah, S.~Ghasemi, A.~Vaccaro, and A.~Vignati.
\newblock Corona rigidity.
\newblock {\em Bull. Symb. Logic}, 31(2):195--287, 2025.

\bibitem{Muenster}
I.~Farah, B.~Hart, M.~Lupini, L.~Robert, A.~Tikuisis, A.~Vignati, and
  W.~Winter.
\newblock Model theory of \cstar-algebras.
\newblock {\em Memoirs of the Amer. Math. Soc.}, 271(1324), 2021.

\bibitem{farah2021corson}
I.~Farah and M.~Magidor.
\newblock Corson reflections.
\newblock {\em Annals of Pure and Applied Logic}, 172(5):102908, 2021.

\bibitem{FaSh:Rigidity}
I.~Farah and S.~Shelah.
\newblock Rigidity of continuous quotients.
\newblock {\em J. Math. Inst. Jussieu}, 15(01):1--28, 2016.

\bibitem{farah2023obstructions}
I.~Farah and A.~Vignati.
\newblock Obstructions to countable saturation in corona algebras.
\newblock {\em Proc. Amer. Math. Soc.}, 151(03):1285--1300, 2023.

\bibitem{gelfand1943imbedding}
I.~Gelfand and M.~Neumark.
\newblock On the imbedding of normed rings into the ring of operators in
  {H}ilbert space.
\newblock {\em Matemati\v ceskii Zbornik}, 12(2):197--217, 1943.

\bibitem{ghasemi2016reduced}
S.~Ghasemi.
\newblock Reduced products of metric structures: a metric {F}eferman--{V}aught
  theorem.
\newblock {\em J. Symb. Log.}, 81(3):856--875, 2016.

\bibitem{gillmanrings}
L.~Gillman and M.~Jerison.
\newblock {\em Rings of Continuous Functions}.
\newblock Springer, 1960.

\bibitem{hart2023an}
B.~Hart.
\newblock An introduction to continuous model theory.
\newblock In {\em Model theory of operator algebras}, pages 83--131. Berlin: De
  Gruyter, 2023.

\bibitem{Hart:Cech}
K.P. Hart.
\newblock The \v {C}ech-{S}tone compactification of the real line.
\newblock In {\em Recent progress in general topology ({P}rague, 1991)}, pages
  317--352. North-Holland, Amsterdam, 1992.

\bibitem{hart2005there}
K.P. Hart.
\newblock There is no categorical metric continuum.
\newblock {\em arXiv preprint math/0509099}, 2005.

\bibitem{henson1977first}
C.W. Henson, C.G. Jockusch~Jr, L.A. Rubel, and G.~Takeuti.
\newblock First order topology.
\newblock {\em Diss. Math.}, CXLIII, 1977.

\bibitem{juhasz1992convergent}
I.~Juhasz and Z.~Szentmiklossy.
\newblock Convergent free sequences in compact spaces.
\newblock {\em Proc. Amer. Math. Soc.}, 116(4):1153--1160, 1992.

\bibitem{kunen2003compact}
K.~Kunen.
\newblock Compact spaces, compact cardinals, and elementary submodels.
\newblock {\em Topology and its Applications}, 130(2):99--109, 2003.

\bibitem{Ku:Set}
K.~Kunen.
\newblock {\em Set theory}, volume~34 of {\em Studies in Logic (London)}.
\newblock College Publications, London, 2011.

\bibitem{murray1943rings}
J.~Murray, F.J.and von~Neumann.
\newblock On rings of operators. {IV}.
\newblock {\em Ann. Math.}, 44(4):716--808, 1943.

\bibitem{Pede:Analysis}
G.K. Pedersen.
\newblock {\em Analysis now}, volume 118 of {\em Graduate Texts in
  Mathematics}.
\newblock Springer-Verlag, New York, 1989.

\bibitem{Ru}
W.~Rudin.
\newblock Homogeneity problems in the theory of \v{C}ech compactifications.
\newblock {\em Duke Mathematics Journal}, 23:409--419, 1956.

\bibitem{semrl1999non}
P.~{\v S}emrl.
\newblock Non-linear perturbations of homomorphisms on {$C (X)$}.
\newblock {\em Quarterly J. Math.}, 50(197):87--109, 1999.

\bibitem{Sh:Proper}
S.~Shelah.
\newblock {\em Proper Forcing}.
\newblock Lecture Notes in Mathematics 940. Springer, 1982.

\bibitem{ShSte:PFA}
S.~Shelah and J.~Stepr{\=a}ns.
\newblock {PFA} implies all automorphisms are trivial.
\newblock {\em Proceedings of the American Mathematical Society},
  104:1220--1225, 1988.

\bibitem{She:Notes}
D.~Sherman.
\newblock Notes on automorphisms of ultrapowers of {II}$_1$ factors.
\newblock {\em Studia Math.}, 195:201--217, 2009.

\bibitem{To:Partition}
S.~Todorcevic.
\newblock {\em Partition Problems in Topology}, volume~84 of {\em Contemporary
  mathematics}.
\newblock American Mathematical Society, Providence, Rhode Island, 1989.

\bibitem{vandersteeg2003models}
B.~van~der Steeg.
\newblock {\em Models in topology}.
\newblock Delft University Press, 2003.

\bibitem{vM:Introduction}
J.~van Mill.
\newblock An introduction to $\beta\omega$.
\newblock In K.~Kunen and J.~Vaughan, editors, {\em Handbook of Set-theoretic
  topology}, pages 503--560. North-Holland, 1984.

\bibitem{vanMill2001infinite}
J.~van Mill.
\newblock {\em The infinite-dimensional topology of function spaces}, volume~64
  of {\em North-Holland Mathematical Library}.
\newblock North--Holland, Amsterdam, 2001.

\bibitem{Ve:OCA}
B.~Velickovic.
\newblock {OCA} and automorphisms of {${\mathcal P}(\omega) /\Fin$}.
\newblock {\em Top. Appl.}, 49:1--13, 1992.

\bibitem{vignati2017nontrivial}
A.~Vignati.
\newblock Nontrivial homeomorphisms of \v{C}ech--{S}tone remainders.
\newblock {\em M\"unster J. Math.}, 10(1):189--200, 2017.

\bibitem{vignati2022rigidity}
A.~Vignati.
\newblock Rigidity conjectures for continuous quotients.
\newblock In {\em Ann. Sci. Ec. Norm. Super.}, volume~55, pages 1687--1738,
  2022.

\bibitem{wallman1938lattices}
H.~Wallman.
\newblock Lattices and topological spaces.
\newblock {\em Ann. Math.}, 39(1):112--126, 1938.

\bibitem{waraszkiewicz1934probleme}
Z.~Waraszkiewicz.
\newblock Sur un probl{\`e}me de {M}. {H}ahn.
\newblock {\em Fund. Math}, 22(1):180--205, 1934.

\bibitem{yu1991automorphism}
J.~Y.-C. Yu.
\newblock Automorphism in the \v{C}ech--{S}tone remainder of the reals.
\newblock Preprint, 1991.

\end{thebibliography}

\end{document}